\renewcommand{\arraystretch}{1.2} 
\newcommand{\ra}[1]{\renewcommand{\arraystretch}{#1}}
\providecommand{\keywords}[1]
{
  \small	
  \textbf{{Keywords.}} #1
}
\providecommand{\AMS}[1]
{
  \small	
  \textbf{{AMS subject classifications.}} #1
}
\numberwithin{equation}{section}
\newtheorem{theorem}{Theorem}[section]
\newtheorem{remark}[theorem]{Remark}
\newtheorem{definition}[theorem]{Definition}
\newtheorem{lemma}[theorem]{Lemma}
\newtheorem{proposition}[theorem]{Proposition}
\newtheorem{hypothesis}[theorem]{Hypothesis}
\DeclareMathOperator*{\argmin}{arg\,min}
\title{Optimal incentives to mitigate epidemics: \\ A Stackelberg mean field game approach}
\author{Alexander Aurell\footnote{Department of Operations Research and Financial Engineering,
  Princeton University, 
  Princeton, NJ 08544 
  (\href{mailto:aaurell@princeton.edu}{aaurell@princeton.edu},
  \href{mailtorcarmona@princeton.edu}{rcarmona@princeton.edu},
  \href{mailto:gokced@princeton.edu}{gokced@princeton.edu},
  \href{mailto:lauriere@princeton.edu}{lauriere@princeton.edu}).}
\and Ren\'e Carmona
    \footnotemark[1]
\and G\"ok\c ce Dayan{\i}kl{\i}
    \footnotemark[1]
\and Mathieu Lauri\`ere
    \footnotemark[1]
}
\date{}
\begin{document}

\maketitle
\begin{abstract}
    Motivated by the models of epidemic control in large populations, we consider a Stackelberg mean field game model between a principal and a mean field of agents whose states evolve in a finite state space. The agents play a non-cooperative game in which they control their rates of transition between states to minimize an individual cost. The principal influences the nature of the resulting Nash equilibrium through incentives so as to optimize its own objective. We analyze this game using a probabilistic approach. We then propose an application to an epidemic model of SIR type in which the agents control the intensities of their interactions, and the principal is a regulator acting with non pharmaceutical interventions. To compute the solutions, we propose an innovative numerical approach based on Monte Carlo simulations and machine learning tools for stochastic optimization. We conclude with numerical experiments illustrating the impact of the agents' and the regulator's optimal decisions in two specific models: a basic SIR model with semi-explicit solutions and a more complex model with a larger state space. 
\end{abstract}

\vskip3mm
\keywords{SIR epidemics, Mean field game, Stackelberg equilibrium, Machine learning}

\vskip3mm
\AMS{  92D30,  
  49N90,   
  91A13, 
91A15, 
62M45.  	
}

\vskip 12pt\noindent
\emph{\textbf{Acknowledgments.}}
{This work was done with the support of NSF
DMS-1716673, ARO W911NF-17-1-0578, and AFOSR \# FA9550-19-1-0291.}

\section{Introduction}

Non pharmaceutical interventions such as the reduction of social interactions are powerful measures to limit the spread of an ongoing epidemic. Containment and suppression of disease spread are crucial factors in order to avoid overwhelming the health care system. However, even in the midst of  pandemics, some individuals still refuse to comply with guidelines such as social distancing or mask wearing. From a global perspective, this could push the equilibrium behavior of the population to exceed the limits of the health care system. For this reason, responsible authorities have a keen interest in the design of incentive systems that are acceptable to individuals and sufficiently strong to induce them to successfully combat the epidemic.

In a mathematical model, we would like the decision maker to take both the global state of the society and the individuals' behaviors into account before deciding on an incentive policy. The intractability of large interacting dynamical systems usually prevents that kind of analysis. In this work, we analyze a Stackelberg game between a principal agent representing the regulatory authority, and a field of individuals providing the societal response to the principal's policy. we use the probabilistic approach to mean field games because it provides both a macroscopic description of the state of the population and a microscopic analysis of the behavior of a representative single individual.

Mean Field Game (MFG) models study the equilibrium between a representative player and the distribution of the other players' states and actions. Mean field equilibria are simpler to identify and compute than equilibria of large populations. Moreover, they provide approximate Nash equilibria for certain games with a large but finite number of players. The framework has found numerous applications, from the analysis of growth models in macro-economics, to crowd motion and energy production. Here, we investigate an application to epidemic control.

Compartmental models in epidemic research are in many cases large population limits of interacting Markov chains. Each Markov chain represents the individual's state of health, the transitions between states occurring with rates depending on the global state of the population, the proportions of individuals in different states to be more specific. This form of interaction is clearly in the purview of mean field models. Incorporating in the model the opportunity for individuals to choose their behaviors and control their contributions to the spread of the disease, the interacting system can be analyzed as a MFG. However, a natural choice of controls yields what is often called an extended MFG, where players interact not only through the distribution of their states, but instead through the joint distribution of their states and actions. Next, we give a hands-on example of the extended aspect of our model.

\subsection{The SIR extended MFG with contact factor control}
\label{sec:intro-SIRMFG}

 In order to provide a motivating example, we consider the simplest compartmental model in epidemics, the classical SIR model. First, we outline how to construct it as a large population limit. Secondly, we comment on why an extended MFG formulation is relevant for the large population limit problem if players use what we will call a contact factor control to reduce the risk of disease spread. 

Before moving to the example, we need to introduce some notation. Consider $N$ individuals, each of whom transitions between the states Susceptible ($S$), Infected ($I$), and Removed ($R$). An individual in state $R$ has either gained permanent immunity, or is deceased. Denote the state of individual $j\in\{1,\dots, N\}$ at time $t$ by $X^j_t$, and let $p^N_t = (p^N_t(S),p^N_t(I), p^N_t(R)) := (\frac{1}{N}\sum_{j=1}^N \mathbbm{1}_{i}(X^j_t))_{i\in\{S,I,R\}}$ be the vector of proportions of individuals in each state, in other words, the empirical distribution  of the state at time $t$. 
 
A susceptible individual might meet infected individuals, possibly resulting in disease transmission. Encounters occur pairwise and randomly throughout the population. Their intensity is denoted by $\beta>0$. The number of encounters with infected individuals during a small time interval $[t-\Delta t,t)$ is proportional to the the proportion of the population in state $I$ at $t$. Hence the transition from state $S$ to $I$ happens with intensity $\beta p^N_t(I)$.
    Upon infection an individual starts the path to recovery. The transition from state $I$ to $R$ happens after an exponentially distribution time with rate $\gamma$.
    The state $R$ is absorbing. To summarize, the transition rate matrix, which is common to all agents of the population, is at time $t$,
\begin{equation}
    Q(p^N_t) = 
    \begin{bmatrix}
        -\beta p^N_t(I) & \beta p^N_t(I) & 0
        \\
        0 & -\gamma & \gamma
        \\
        0 & 0 & 0
    \end{bmatrix}.
\end{equation}
As $N\rightarrow \infty$, $(p^N_t)_t$ converges in probability to the unique solution to
\begin{equation}
\label{eq:SIR1}
    \dot{p}_t = p_tQ(p_t),\quad p_0 = p^0,
\end{equation}
if the initial configuration is sampled from a symmetric probability measure with marginals equal to $p^0$, a classical result found for example in \cite{kurtz1981approximation}. 
Scaling $p_t$ by a population size $N$, \textit{i.e.}, letting $Np_t =: (S(t), I(t), R(t))$.
we retrieve the standard formulation of the SIR model,
\begin{equation}
\left\{
    \begin{aligned}
    \dot{S}(t) &= -\frac{\beta}{N}I(t)S(t),& S(0) &= Np^0(S),
    \\
    \dot{I}(t) &= \frac{\beta}{N}I(t)S(t) - \gamma I(t),& I(0) &= Np^0(I),
    \\
    \dot{R}(t) &= \rho\gamma I(t),& R(0) &= Np^0(R).
    \end{aligned}
\right.
\end{equation}

Now, let us assume that each individual has the option to control the intensity they seek or try to avoid interacts with others. Instances occur when individuals try to lower the risk of disease transmission by , \textit{e.g.}, avoiding to ride public transportation at congested hours, shopping online, or wearing protective equipment. 

The probability of the spread of the disease is likely to be a non-linear function of the joint effort of the individuals that interact. Say, for example, that two individuals meet and both have the option to wear a protective face mask. The absolute decrease in risk of transmission is not necessarily equal for each additional mask that is worn. Motivated by this observation, we assume that the individuals' efforts to reduce spread affect the probability of transmission in a multiplicative way: in each encounter the probability of disease spread is scaled by each of the agents effort. We view an individual's effort to meet someone as their control. We often call it their \emph{contact factor} because this effort enters as a factor in the contact rate between individuals of specific states.

Assuming that the meeting frequency is $\beta$, that the pairing is random, that the disease spreads from infected agents to susceptible, 
and that the spread probability is scaled by the effort intensity of the search for meetings, the transition rate for individual $j$, currently susceptible, to the state of infected is
\begin{equation}
    \beta \alpha^j_t \frac{1}{N}\sum_{k=1}^N \alpha^k_t 1_{I}(X^k_{t-}),
\end{equation}
where $\alpha^k_t$ denotes the (contact factor) action of individual $k\in\{1,\dots, N\}$ at time $t$, selected from set $A$ of admissible actions. Along the lines of the heuristics of MFG theory, we anticipate that in an appropriate approximation of our interacting system in the limit $N\rightarrow \infty$, the representative agent transitions from susceptible to infected with rate
\begin{equation}
    \beta \alpha_{t} \int_A a \rho_t(da,I),
\end{equation}
where $\rho_t$ is the joint distribution of action and state of the representative agent in a suitable probability space (rigorously defined in the next section). The joint action-state distribution is often referred to as the extended mean field in the MFG literature.
To summarize, the representative agent transitions between states $S$, $I$, and $R$ according to the rate matrix $Q(t,\alpha_t,\rho_t)$,
\begin{equation}
\label{eq:SIR-Qmatrix-intro}
    Q(t,\alpha,\rho)
    =
    \begin{bmatrix}
    \cdots & \beta\alpha_t\int_A a\rho_t(da,I) & 0
    \\
    0 & \cdots & \gamma
    \\
    \eta  & 0 & \cdots
    \end{bmatrix},
\end{equation}
where $\beta,\gamma,\eta  \in \mathbb{R}_+$ are non-controlled constants, and as usual, the diagonal terms $\cdots$ should be replaced by the negative of the sum of the entries in the same row. See Fig.~\ref{fig:SIR-diagram-intro} for a diagram of the dynamics. 

\begin{figure}
\vskip-5mm
\begin{center}

\tikzset{every picture/.style={line width=0.75pt}} 

\begin{tikzpicture}[x=0.75pt,y=0.7pt,yscale=-1,xscale=1]

\draw  [fill={rgb, 255:red, 135; green, 206; blue, 250 }  ,fill opacity=1 ] (100,106.8) .. controls (100,103.6) and (102.6,101) .. (105.8,101) -- (124.2,101) .. controls (127.4,101) and (130,103.6) .. (130,106.8) -- (130,124.2) .. controls (130,127.4) and (127.4,130) .. (124.2,130) -- (105.8,130) .. controls (102.6,130) and (100,127.4) .. (100,124.2) -- cycle ;
\draw  [fill={rgb, 255:red, 250; green, 128; blue, 114 }  ,fill opacity=1 ] (220,106) .. controls (220,102.69) and (222.69,100) .. (226,100) -- (244,100) .. controls (247.31,100) and (250,102.69) .. (250,106) -- (250,124) .. controls (250,127.31) and (247.31,130) .. (244,130) -- (226,130) .. controls (222.69,130) and (220,127.31) .. (220,124) -- cycle ;
\draw  [fill={rgb, 255:red, 189; green, 183; blue, 107 }  ,fill opacity=1 ] (340,106.8) .. controls (340,103.6) and (342.6,101) .. (345.8,101) -- (364.2,101) .. controls (367.4,101) and (370,103.6) .. (370,106.8) -- (370,124.2) .. controls (370,127.4) and (367.4,130) .. (364.2,130) -- (345.8,130) .. controls (342.6,130) and (340,127.4) .. (340,124.2) -- cycle ;
\draw    (130,115) -- (218,115) ;
\draw [shift={(220,115)}, rotate = 180] [color={rgb, 255:red, 0; green, 0; blue, 0 }  ][line width=0.75]    (10.93,-3.29) .. controls (6.95,-1.4) and (3.31,-0.3) .. (0,0) .. controls (3.31,0.3) and (6.95,1.4) .. (10.93,3.29)   ;
\draw    (250,115) -- (320.5,115) -- (338,115) ;
\draw [shift={(340,115)}, rotate = 180] [color={rgb, 255:red, 0; green, 0; blue, 0 }  ][line width=0.75]    (10.93,-3.29) .. controls (6.95,-1.4) and (3.31,-0.3) .. (0,0) .. controls (3.31,0.3) and (6.95,1.4) .. (10.93,3.29)   ;
\draw    (355,130) .. controls (296.79,178.76) and (178.69,177.02) .. (115.94,130.7) ;
\draw [shift={(115,130)}, rotate = 396.94] [color={rgb, 255:red, 0; green, 0; blue, 0 }  ][line width=0.75]    (10.93,-3.29) .. controls (6.95,-1.4) and (3.31,-0.3) .. (0,0) .. controls (3.31,0.3) and (6.95,1.4) .. (10.93,3.29)   ;

\draw (108,109) node [anchor=north west][inner sep=0.75pt]   [align=left] {$\displaystyle S$};
\draw (230,109) node [anchor=north west][inner sep=0.75pt]   [align=left] {$\displaystyle I$};
\draw (348,109) node [anchor=north west][inner sep=0.75pt]   [align=left] {$\displaystyle R$};
\draw (138,87) node [anchor=north west][inner sep=0.75pt]  [font=\scriptsize] [align=left] {$\displaystyle \beta \alpha _{t}\int a\rho _{t}( a,I)$};
\draw (288,97) node [anchor=north west][inner sep=0.75pt]  [font=\scriptsize] [align=left] {$\displaystyle \gamma $};
\draw (230,147) node [anchor=north west][inner sep=0.75pt]  [font=\scriptsize] [align=left] {$\displaystyle \eta$};

\end{tikzpicture}

\end{center}
\caption{SIR model with extended mean-field interactions corresponding to the $Q$-matrix~\eqref{eq:SIR-Qmatrix-intro}.}
\label{fig:SIR-diagram-intro}
\vskip-7mm
\end{figure}

The representative agent is incentivized by a regulator to choose their contact factor close to a level determined by the regulator. The level and incentive can vary between the susceptible, infected, and recovered parts of the population, as the state of an agent naturally influences their contribution to the overall societal risks in an epidemic (which the regulator aims to mitigate). Moreover, the representative agent faces a cost of inconvenience for being sick. 
In particular, let us consider a model where a representative agent pays per unit of time a running cost given as
\begin{equation}
\label{eq:cost-agents-SIR1-intro}
\begin{aligned}
    \frac{c_\lambda}{2}\left(\lambda^{(S)}_t - \alpha_t\right)^2\mathbbm{1}_{S}(x) + \left(\frac{1}{2}\left(\lambda^{(I)}_t - \alpha_t\right)^2 + c_I\right)\mathbbm{1}_{I}(x) 
    + \frac{1}{2}\left(\lambda^{(R)}_t-\alpha_t\right)^2\mathbbm{1}_{R}(x),
\end{aligned}
\end{equation}
where $c_\lambda,c_I \in\mathbb{R}_+$ are constants, $\boldsymbol\lambda^{(\cdot)}$ are the socialization levels recommended by the regulator, and $\alpha_t$ is the contact factor of the representative player at time $t$.\footnote{The running cost \eqref{eq:cost-agents-SIR1-intro} is far from the only possible model choice within the framework presented in this paper, but one that facilitates evaluation of the performance of the proposed numerical method. In a more general setting, the running cost could be a function depending also on the extended mean field interactions.}
In this model, which we will revisit in the section on numerical experiments, a $\boldsymbol\lambda^{(S)}$ valued close to $0$ can be interpreted as a recommendation for low levels of social interactions (or more generally a high level of cautiousness through non-pharmaceutical interventions, such as hand cleaning, lockdown, mandatory mask wearing etc.) for all susceptible individuals. Conversely, $\boldsymbol\lambda^{(S)}$ approximately equal to $1$ amounts to recommending the regular level of social interactions (i.e. no restriction). 
Finally, we assume that the agent also receives a terminal utility $U(\xi)$ depending on the agent's behavior during the time interval $[0,T]$, payed by the regulator as an incentive to follow the recommended socialization protocols. 

 The problem is then two-fold. First, the regulator announces a policy $(\boldsymbol\lambda,\xi)$ so as to minimize an objective function which involves the state of the population (\textit{e.g.}, the proportion of infected people). The minimization is constrained, not all policies will be accepted by the population and the regulator is optimizing only over acceptable ones. Acceptable here does not mean a complete commitment to following the recommendations, but rather accepting the penalty structure for deviations. The way the population reacts to a given policy $(\boldsymbol\lambda,\xi)$ is through a Nash equilibrium in which each agent tries to optimize their own individual cost induced by~\eqref{eq:cost-agents-SIR1-intro} and their terminal payment utility $U(\xi)$. If the representative agent's expected cost at the Nash equilibrium is above some threshold, the policy is deemed unacceptable and the population rejects the policy altogether (the policy is not feasible). The regulator, to find an optimal feasible policy, needs to understand how the population reacts to each policy. The two nested problems comprise a so-called Stackelberg game: the regulator's problem is a constrained optimization problem driven by the Nash equilibrium of the player population. 

The two components $\boldsymbol\lambda$ and $\xi$ of the regulator's policy play different roles. The process $\boldsymbol\lambda$ is used to incentivize the agents to adopt a certain cautiousness level over time. Practically, deviations from $\boldsymbol\lambda$ are penalized. The terminal payment $\xi$ is used to reward participation in the incentive structure. Later, we will see how the regulator decides on a terminal reward or payment such that the representative agent does not reject the proposed incentive scheme altogether. That is, the terminal payment's role is to make the policy feasible in the sense that it is a reasonable compensation to the agent for complying with the cautiousness level recommendations.

\subsection{Related literature}

\subsubsection{Discrete state space MFG} 
The behavior of the population of agents, amongst whom disease spread takes place, is in this paper modeled by a discrete space MFG. MFGs were first developed for continuous state space \cite{lasry2006jeux,lasry2006jeux2,huang2006large}. Soon after works on discrete state spaces  followed \cite{gomes2010discrete, kolokoltsov2012nonlinear, Gomes2013}. Amongst the many contributions to the field of discrete state MFGs we note the minor-major player model \cite{Carmona2016}, the probabilistic approach \cite{Cecchin2018}, the master equation approach~\cite{bayraktar2018analysis}, and the extended game \cite{Carmona2018Extended}. Mean-field optimal control, risk-sensitive control, and zero-sum games are treated in \cite{choutri2018stochastic,choutri2019mean,choutri2019optimal} which cover cases of unbounded jump intensities.

\subsubsection{Compartmental models and MFG in epidemics}
Games and optimal control in compartmental models have been studied intensively for a long time. This literature review focuses on other work within the mean-field approach, which has gained attention is the last decade. Efforts to model the control of disease spread range from strategies for social contacts to vaccination. 

Our work falls within a category of models where agents attempt to suppress the risk of disease spread. In \cite{Elie2020} a deterministic mean-field game is studied where the agents control the contact rate, which is proportional to the risk of disease spread. The agents are penalized if they get infected prior to some terminal time horizon, which introduces a stopping time component to the game similar to that in evacuation problems. A contact rate common to all agents and all states is found such that the agents are in an MFG equilibrium with the crowd.
In \cite{Hubert2020} 
a Stackelberg game where the epidemic evolves in the population of agents, modeled as a MFG, according to a compartmental model is considered. The agents collaborate to find the best contact rate to suppress the epidemic. The compartmental models considered in the paper are stochastic and the uncertainty in the model is controlled by the principal through testing policies. Their goal is to mitigate the saturation of intensive care units. This problem was studied from the point of view of optimal control in~\cite{charpentier2020covid}, where numerical results show that it is optimal to isolate infected individuals so as to maintain a basic reproduction rate close to $1$. In \cite{Cho2020}, SIR and SEIR\footnote{The SEIR model includes the additional ``Exposed" state, modeling the incubation period before the agent transitions to the ``Infected" state.} models where agents control the contact rate are studied and the author compares the MFG equilibrium, the socially optimal strategy, and unconstrained disease spread.

Vaccination is a powerful tool when available. However, it is not considered in this paper. With a MFG formulation of the SIR model, In 
vaccination strategies in a society of non-cooperative individuals are studied. The authors extend the model to include limited vaccination capacity \cite{laguzet2016equilibrium}, limited persistence \cite{salvarani2016individual}, and vital dynamics \cite{hubert2018nash}.\footnote{Persistence here refers to immunity to reinfection and when this is limited the agents will eventually become vulnerable again. The SIR model with vital dynamics includes births and deaths.} 
Vaccination has also been studied with MFG-based SIR models in \cite{doncel2017mean,gaujal20vaccination}, their focus being the loss of efficiency in the mean-field game compared to optimal vaccination policies.

Spatial distance naturally mitigates the risks of the pandemic and \cite{Tembine2020} uses a mean-field type game to take the spatial features of disease spread into account (and many more features, \textit{e.g.}, physical and social status of the agent). In \cite{lee2020}, the authors consider three crowds, each corresponding to a state in the SIR model, which evolve spatially. The pandemic risks are mitigated by a central planner who controls of spatial velocity of the agents. The multi-population mean-field optimal control problem is studied.

\subsubsection{Contract theory and Stackelberg MFG} 

Contract theory studies the interaction of a principal and an agent, where the former proposes a contract to the latter, who decides whether or not they should work for the principal and receive a reward. The principal tries to anticipate the decision of the agent and to design an attractive contract while still trying to maximize their profit. Solutions to this type of problems are typically studied using the concept of Stackelberg equilibrium. In~\cite{holmstrom1987aggregation}, continuous time method is used to study this type of problems, and in~\cite{Sannikov2008,Sannikov2013} dynamic programming and martingale optimality principles are used to characterize the solution in the framework of optimal control theory. These ideas are generalized in~\cite{Cvitanic2018}. In~\cite{djehiche2014principal} the solution in a general class of principal-agent problem is characterized by the stochastic maximum principle.

In the context of MFGs, problems with a principal and a mean-field of agents have been studied in~\cite{elie2019tale} in the continuous state space setting and in~\cite{Carmona2018Contract} for finite state spaces. The theory has been extended in several directions, including problems with delayed information~\cite{MR3376121}. This type Stackelberg mean field models have found applications for instance to advertising~\cite{salhab2018dynamic}, where the principal plays the role of the advertiser and the population of agents decides whether they want to buy a product. Stackelberg equilibria with a mean-field of agents have also been applied in the context of epidemic containment: the aforementioned~\cite{Carmona2018Contract} proposes an application with two cities where the agents can move between cities and the principal can influence the quality of healthcare, while in~\cite{Hubert2020} the authors consider a model where the principal can choose a tax policy and a testing policy. 

\subsection{Contributions and paper structure}

The scientific contribution of this work is two-fold. Firstly, we move beyond current theory and consider a Stackelberg game between a principal and an extended MFG. A common assumption in extended MFGs is that any dependency on the joint distribution of action and state only involves dependencies on the marginal distributions. We avoid this assumption in order to capture the epidemiological aspects outlined in Section~\ref{sec:intro-SIRMFG}. The trade-off is that, at some points in the paper, we need to make assumptions about the existence and uniqueness of mean field Nash equilibria. We work in the weak probabilistic formulation of the problem and we allow the player's action to depend on their state. Our numerical experiments show that contact factors do differ between the compartments of the population. To the best of our knowledge, compartmental models with applications towards epidemics which incorporate at the same time a non-cooperative population and a regulator have not yet been suggested or studied in the literature.

Secondly, we propose an innovative numerical scheme based on neural networks, and validate its performance on simple examples for which we can derive semi-explicit solutions. To obtain a problem amenable to numerical treatment by optimization procedures, we first rewrite the principal's problem under the constraint of the mean field Nash equilibrium as an optimal control problem with two forward stochastic equations. Then, the numerical scheme relies on the approximation of the population by an interacting particle system and the approximation of the controls by neural networks, including the principal's policy. The optimization of the principal's cost is then performed using a variant of stochastic gradient descent to update the neural networks' parameters. 

We carry out multiple numerical experiments studying model characteristics and policy impact. 
As a first step towards understanding how a regulator should design containment policies, we test the population's reaction to various policies. We show that when the agents minimize their own cost and behave as in a Nash equilibrium, they adopt some level of cautiousness, which reduces the severity of the epidemic compared to an unconstrained  \emph{free spread} scenario. 
Moreover, we show that taking early action (\textit{e.g.}, deciding on an early lockdown) has a bigger impact on disease spread than a strategic action taken later. 
In a second numerical test, we solve the full Stackelberg game problem (where the regulator optimizes over the policies to minimize its own cost) for both the SIR-based example of Section~\ref{sec:intro-SIRMFG} and an extended model 
with two more additional states for the agents ($E$: Exposed and $D$: Deceased). For the latter, we show that if agents are not feeling safe enough, they are able to rationally choose lower contact levels than the recommended levels by the regulator. 

The rest of the paper is structured as follows. In Section~\ref{sec:main} the Stackelberg game between a principal and a non-cooperative population is introduced and analyzed. In Section~\ref{sec:numerics} the details of the numerical approach are presented. Finally, Section~\ref{subsec:experiments} contains the evaluation of the numerical method and further simulations. All proofs have been postponed to appendices.

\section{The model}

\label{sec:main}
\subsection{Preliminaries}
We adopt the following notation throughout the paper: $m$ is a finite integer corresponding to the number of states, $E := \{e_1,\dots, e_m\}$ is a state space where $e_i\in \mathbb{R}^m$ is the basis vector in direction $i$, $A := [0,1]$ is an action space, and $\mathcal{R} := \mathcal{P}(A\times E)$ is the set of Borel probability measures on $A\times E$. We endow $A$ with the Euclidean metric $|\cdot |$, $E$ with a bounded discrete metric, and $A\times E$ with the $1$-product metric. We will identify the set $\mathcal{P}(E)$ with the $m$-dimensional simplex and use the Euclidean metric $\|\cdot \|$ to measure distances on $\mathcal{P}(E)$ (the choice of metric on $\mathcal{P}(E)$ is irrelevant since all metrics derived from norms on $\mathcal{P}(E)$ are equivalent).
We endow $\mathcal{R}$ with the $1$-Wasserstein metric $W_{\mathcal{R}}$ which is well-defined on $\mathcal{R}$ since $A\times E$ is compact. 

Let $T>0$ be a constant corresponding to a finite time horizon. Let $\Lambda$ be the set of measurable $\mathbb{R}^m_+$-valued functions with domain $[0,T]$ and let $M(\mathcal{R})$ and $M(\mathcal{P}(E))$ be the set of measurable mappings from $[0,T]$ to $\mathcal{R}$ and to $\mathcal{P}(E)$, respectively. Let $Q : [0,T] \times A \times \mathcal{R} \mapsto \mathbb{R}^{m \times m}$ be a bounded measurable function such that $Q(t,a,\rho)$ is a transition rate matrix, also called $Q$-matrix,\footnote{That is, $q(t,i,j,a,\rho)\geq 0$ for all $1\leq i,j,\leq m$ and $\sum_{j\neq i}q(t,i,j,a,\rho) = -q(t,i,i,a,\rho)$, where $q(t,i,j,a,\rho)$ is the element element at row $i$ and column $j$ of $Q(t,a,\rho)$.} for all $(t,a,\rho)\in [0,T]\times A\times \mathcal{R}$.

A process $(X_t)_{t\in[0,T]}$ will in short-hand be denoted $\boldsymbol X$. Let $\Omega$ be the space of c\`adl\`ag functions $\omega: [0,T]\rightarrow E$ and from now on let $\boldsymbol X$ be the canonical process, $X_t(\omega) = \omega(t)$. Denote by $\mathbb{F} := (\mathcal{F}_t)_{t\in[0,T]}$ the natural filtration generated by $\boldsymbol X$, with $\mathcal{F}_t := \sigma( \{X_s, s\leq t\})$ and $\mathcal{F} := \mathcal{F}_T$,  and by $\mathbb{A}$ the collection of $\mathbb{F}$-predictable processes $\boldsymbol \alpha$ with values in $A$. For any probability measure $\mathbb{Q}$ on $(\Omega, \mathcal{F})$ we denote by $\mathbb{E}^{\mathbb{Q}}$ expectation under $\mathbb{Q}$.

On $(\Omega, \mathbb{F}, \mathcal{F})$ we consider the probability measure $\mathbb{P}$ under which the law of $X_0$ is $p^0\in \mathcal{P}(E)$ and $\boldsymbol X$ is a continuous time Markov chain with transition rate from $e_i$ to $e_j$ equal to $1$ if $(i,j)\in G \subset \{1,\dots,m\}^2$, otherwise zero. Here $G$ represents a graph of states on which a typical agent evolves. Denote the corresponding $Q$-matrix by $Q^0$. We let, for $i=1,\dots,m$,
\begin{equation}
\psi(e_i) := \text{diag}(Q^0 e_i) - Q^0\text{diag}(e_i) - \text{diag}(e_i)Q^0,\quad t\in[0,T],
\end{equation}
and let $\psi_t = \psi(X_{t-})$. Denote by $\psi_t^+$ the Moore-Penrose generalized inverse of the matrix $\psi_t$.
Expectation under $\mathbb{P}$ is abbreviated to $\mathbb{E}$.

We denote by $\mathcal{H}^2$ the set of $\mathbb{F}$-adapted and real-valued c\`adl\`ag processes $\boldsymbol Y$ such that $\mathbb{E}[\int_0^T Y_t^2 dt] < + \infty$ and by $\mathcal{H}^2_X$ the set of $\mathbb{F}$-adapted and $\mathbb{R}^m$-valued left-continuous processes $\boldsymbol Z$ such that $\mathbb{E}[\int_0^T \|Z_t\|^2_{X_{t-}} dt] < +\infty$. The seminorms $\| \cdot \|_{e_i}$, $i=1,\dots,m$,  and the stochastic seminorm $\| \cdot \|_{X_{t-}}$ are defined by 
\begin{equation}
    \|z\|^2_{e_i} := z^* \psi(e_i) z,
    \qquad
    \|z\|^2_{X_{t-}} := z^* \psi_t z,\quad z\in\mathbb{R}^m.
\end{equation}
Hereinafter we use a superscript $*$ to denote the transpose of a vector or a matrix.

\subsection{The Stackelberg extended MFG in a general setting}
\label{sec:general-analysis}

We consider a society made up of a population of non-cooperative players and one principal agent. We begin by focusing on the game between the members of the population. As is common in the MFG paradigm, a representative player takes the role of any individual in the population. Given knowledge of how the population and the principal agent act over time, the representative player optimizes their cost functional. 

To use strategy $\boldsymbol\alpha = (\alpha_t)_{t\in[0,T]} \in \mathbb{A}$ the representative player pays the expected total cost
\begin{equation}
\label{eq:cost_of_minors}
    J^{\boldsymbol\lambda,\xi}(\boldsymbol\alpha, \boldsymbol\rho)
    := 
    \mathbb{E}^{\mathbb{Q}^{\boldsymbol\alpha,\boldsymbol\rho}}\left[
    \int_0^T f(t, X_{t}, \alpha_t, \rho_t; \lambda_t)dt - U(\xi)
    \right],
\end{equation}
where $(\boldsymbol{\lambda},\xi)$ is the principal's policy choice, $f : [0,T]\times E\times A\times \mathcal{R} \rightarrow \mathbb{R}$ is a running cost which depends on the policy $\boldsymbol \lambda$,
$\boldsymbol{\rho} = (\rho_t)_{t \in [0,T]}\in M(\mathcal{R})$ is a flow of measures in $\mathcal{R}$ representing the joint state-control distribution in the population, and $\mathbb{Q}^{\boldsymbol \alpha,\rho }$ is a probability measure over $(\Omega,\mathcal{F})$. The notation will be our convention throughout the paper whenever there is no possibility for confusion. The canonical process $\boldsymbol X$ appearing in \eqref{eq:cost_of_minors} models the representative player's dynamics under the probability measure $\mathbb{Q}^{\boldsymbol{\alpha,\rho}}$. Under $\mathbb{Q}^{\boldsymbol{\alpha,\rho}}$, $\boldsymbol X$ is a pure-jump process with transition rate matrix $Q(t,\alpha_t, \rho_t)$ at time $t$.\footnote{Existence of the measure $\mathbb{Q}^{\boldsymbol\alpha,\boldsymbol\rho}$ is granted by Girsanov Theorem under some conditions, see for example \cite{Carmona2018Extended} and the references therein. The hypothesis on $Q$ stated in Section~\ref{sec:general-analysis} is strong enough for existence to hold.}

The agent is truly representative if their joint distribution of action and state agrees with the population. The consistency condition in MFG assures just this, see $(ii)$ in definition \ref{def:weak-extended-mfg-equilibrium} where the equilibrium notion in the population's problem is formalized.
\begin{definition}
\label{def:weak-extended-mfg-equilibrium}
If the pair $(\boldsymbol{\hat{\alpha}},\boldsymbol{\hat\rho}) \in \mathbb{A}\times M(\mathcal{R})$ satisfies
\begin{itemize}
    \item[(i)] $\boldsymbol{\hat\alpha} = \arg\inf_{\boldsymbol{\alpha}\in\mathbb{A}} J^{\boldsymbol\lambda, \xi}(\boldsymbol\alpha, \boldsymbol{\hat{\rho}})$;
    \item[(ii)] $\forall t\in[0,T]\ :\ \hat{\rho}_t = \mathbb{Q}^{\boldsymbol{\hat{\alpha}},\boldsymbol{\hat{\rho}}}\circ (\hat{\alpha}_t, X_t)^{-1}$,
\end{itemize}
we say that $(\boldsymbol{\hat \alpha, \hat \rho})$ is a mean-field Nash equilibrium given the contract $(\boldsymbol\lambda, \xi)$. We denote by $ \mathcal{N}(\boldsymbol\lambda, \xi)$ the set of such mean field Nash equilibria.
\end{definition}

We state in Proposition~\ref{prop:connection-Nash-BSDE} below that (under suitable assumptions) $(\boldsymbol{\hat{\alpha}},\boldsymbol{\hat{\rho}})\in \mathcal{N}(\boldsymbol\lambda,\xi)$ if $(\boldsymbol{Y},\boldsymbol Z,\boldsymbol{\hat{\alpha}}, \boldsymbol{\hat{\rho}}, \mathbb{Q})$ is a solution to the following equation\footnote{We define a solution to \eqref{eq:MKV-basde} in line with \cite[Def. 2]{Carmona2018Extended}: the tuple $(\boldsymbol{Y,Z,\alpha,\rho,\mathbb{Q}})$ is a solution to the McKean-Vlasov BSDE \eqref{eq:MKV-basde} if $\boldsymbol Y \in \mathcal{H}^2$, $\boldsymbol Z \in \mathcal{H}^2_X$, $\boldsymbol \alpha \in \mathbb{A}$, $\boldsymbol\rho\in M(\mathcal{R})$, $\mathbb{Q}$ is a probability measure on $(\Omega,\mathcal{F})$, and \eqref{eq:MKV-basde} is satisfied $\mathbb{P}$-a.s. for all $t\in[0,T]$. } under $\mathbb{P}$
\begin{equation}
\label{eq:MKV-basde}
\left\{
\begin{aligned}
    Y_t 
    &= 
    U(\xi) + \int_t^T \hat{H}(s,X_{s-}, Z_s, \hat{\rho}_s)ds - \int_t^T Z^*_sd\mathcal{M}_s,
    \\
    \mathcal{E}_t 
    &= 
    1 + \int_0^t \mathcal{E}_{s-}X^*_{s-}\left(Q(s,\hat{\alpha}_t, \hat{\rho}_s) - Q^0\right)\psi^+_sd\mathcal{M}_s,
    \\
    \hat{\rho}_t 
    &= 
    \mathbb{Q}\circ\left( \hat{\alpha}_t, X_t \right)^{-1},\ \
    \frac{d\mathbb{Q}}{d\mathbb{P}} = \mathcal{E}_T,
    \ \ \hat{\alpha}_t 
    = 
    \hat{a}(t,X_{t-},Z_t, \hat{\rho}_t),
    \end{aligned}
    \right.
\end{equation}
where $\hat{H}$ is the minimized Hamiltonian of the representative player and $\hat{a}$ is the minimizer, defined in \eqref{eq:def-of-minimized-Hamiltonian} below. The solution has a $(\boldsymbol\lambda,\xi)$-dependence (entering the problem through $U(\xi)$ and the Hamiltonian) which we suppress to alleviate the notation.

The principal's problem is to find the policies that yield the most favorable configuration of minor players in terms of their cost. By using policies $(\boldsymbol \lambda, \xi)$ as incentives, the principal can modify the set of mean-field Nash equilibria $\mathcal{N}(\boldsymbol \lambda, \xi)$ and hence exert influence over the population's behavior. In the sequel, unless otherwise mentioned, we consider the following class of policies for the principal. 
\begin{definition}
A policy $(\boldsymbol\lambda,\xi)$ is admissible if the deterministic mapping $\lambda\in \Lambda$, the real-valued random variable $\xi$ is $\mathcal{F}$-measurable, and that $\mathcal{N}(\boldsymbol{\lambda},\xi)$ is a singleton. We denote the set of admissible policies by $\mathcal{C}$.
\end{definition}

To use an admissible policy $(\boldsymbol \lambda, \xi)\in \mathcal{C}$ the principal pays the cost

where $\hat{p}_t^{\boldsymbol\lambda,\xi}(e_i) = \hat{\rho}^{\boldsymbol\lambda,\xi}_t(A,e_i)$, $i=1,\dots,m$, and
$(\boldsymbol{\hat{\alpha}}^{\boldsymbol\lambda,\xi}, \boldsymbol{\hat{\rho}}^{\boldsymbol\lambda,\xi}) = \mathcal{N}(\boldsymbol \lambda,\xi)$.

The last aspect of the problem is a walk-away option of the minor players: all Nash equilibria are disregarded in which the representative agent's expected total cost is higher than the reservation threshold $\kappa$. The principal's optimization problem is
\begin{equation}
    V(\kappa) := \inf_{(\boldsymbol\lambda,\xi)\in \mathcal{C}}\inf_{J^{\boldsymbol \lambda, \xi}(\mathcal{N}(\boldsymbol\lambda,\xi)) \leq \kappa}
    J(\boldsymbol\lambda,\xi).
\end{equation}

\subsection{Analysis of the Stackelberg extended MFG}
\label{sec:analysis}
In this section we will state results under the hypotheses presented below. While setting the hypotheses, we also make the notation used in the previous section precise.

\begin{hypothesis}[Structure and regularity of the $Q$-matrix]
\label{hyp:q}
\vspace{-4pt}
\begin{itemize}
    \item [ ]
    \item[(i)] There exists constants $C_1,C_2>0$ such that for all $(t,i,j,\alpha,\rho)\in [0,T]\times G \times A \times \mathbb{R}$ we have $0<C_1<q(t,i,j,\alpha,\rho)< C_2$. For all $(i,j)\in\{1,\dots,m\}^2\backslash G$, $q(t,i,j,\alpha,p) = 0$ for $t\in[0,T]$, $\alpha\in A$, $\rho\in \mathcal{R}$.
    \item[(ii)] There exists a constant $C>0$ such that for all $t\in [0,T]$, $ (i,j)\in G$, $\alpha,\alpha'\in A$, and $\rho,\rho'\in \mathcal{R}$, we have
    \begin{equation}
        |q(t,i,j,\alpha,\rho) - q(t,i,j,\alpha',\rho')| \leq C\left(|\alpha-\alpha'| + W_{\mathcal{R}}(\rho,\rho')\right).
    \end{equation}
\end{itemize}
\end{hypothesis}
\begin{hypothesis}[Regularity of the running cost]
\label{hyp:f}
\vspace{-4pt}
\begin{itemize}
    \item[ ]
    \item[ ] There exists a constant $C>0$ such that for all $(t,i,\ell) \in [0,T]\times\{1,\dots, m\}\times \mathbb{R}^m_+$, $\alpha, \alpha'\in A$, $p,p'\in\mathcal{P}(E)$, $\rho,\rho'\in\mathcal{R}$, we have
    \begin{equation}
        |f(t,e_i,\alpha,\rho; \ell) - f(t,e_i,\alpha',\rho'; \ell)| \leq C\left(|\alpha - \alpha'| + W_{\mathcal{R}}(\rho,\rho')\right).
    \end{equation}
\end{itemize}
\end{hypothesis}
 Given a policy $(\boldsymbol\lambda,\xi)\in\mathcal{C}$, the Hamiltonian for the representative player's optimization problem is the function $H : [0,T]\times E\times\mathbb{R}^m \times A \times \mathcal{R} \rightarrow \mathbb{R}$
\begin{equation}
H: (t,x,z,\alpha,\rho) \mapsto x^*\left(Q(t,\alpha,\rho)-Q^0\right)z + f(t,x,\alpha,\rho; \lambda_t).
\end{equation}
The representative player's reduced Hamiltonian in state $e_i$ is $H_i : (t,z,\alpha,\rho) \mapsto H(t,e_i,z,\alpha,\rho)$,  $i=1,\dots, m$.
\begin{hypothesis}[Minimizer of the Hamiltonian]
\label{hyp:minimization-of-H-extended}
\vspace{-4pt}
\begin{itemize}
    \item[ ]
    \item[(i)] For any $t\in [0,T]$, $i\in\{1,\dots, m\}$, $z\in \mathbb{R}^m$ and $\rho\in\mathcal{R}$, the mapping $\alpha\mapsto H_i(t,z,\alpha,\rho)$ admits a unique minimizer which we denote by $\hat{a}_i(t,z,\rho)$.
    \item[(ii)] $\hat{a}_i$ is measurable on $[0,T]\times \mathbb{R}^m \times\mathcal{R}$ for every $i \in \{1,\dots,m\}$.
\end{itemize}
\end{hypothesis}
With the minimizers at hand we define the representative player's optimized Hamiltonian $\hat{H}$ and the optimizer $\hat{\alpha}$ as
\begin{equation}
\label{eq:def-of-minimized-Hamiltonian}
        \hat{H}(t,x,z,\rho) 
        := 
        \sum_{i=1}^m 1_{e_i}(x)\hat{H}_i(t,z,\rho),\quad
        \hat{a}(t,x,z,\rho)
        :=
        \sum_{i=1}^m 1_{e_i}\hat{a}_{i}(t,z,\rho),
\end{equation}
where $\hat{H}_i(t,z,\rho) = H_i(t,z,\hat{a}_i(t,z,\rho),\rho)$.

In the notation, we intentionally differentiate between a strategy and the function minimizing the Hamiltonian by denoting the former with the greek letter $\alpha$ and the latter with the hatted latin letter $\hat a$. By evaluating the function $\hat a$ as in \eqref{eq:MKV-basde} we get an admissible strategy of feedback form (feedback on state, aggregate, and joint distribution). Later, in Proposition~\ref{prop:connection-Nash-BSDE}, we study how $\hat a$ can be used to construct a mean-field Nash equilibrium.

\begin{hypothesis}[Regularity of the Hamiltonian minimizer]
\label{hyp:alpha-lip-in-z}
\vspace{-4pt}
\begin{itemize}
    \item[ ]
    \item[ ] There exists a constant $C>0$, independent of the principal's policy, such that for all $(t,i,\rho) \in [0,T]\times \{1,\dots, m\}\times \mathcal{R}$ and $z,z'\in\mathbb{R}^m$: 
    \begin{equation}
        |\hat{a}_i(t,z,\rho) - \hat{a}_i(t,z',\rho)| \leq C\|z-z'\|_{e_i}.
    \end{equation}
\end{itemize}
\end{hypothesis}

The following result provides necessary and sufficient conditions for a mean-field Nash equilibrium. The proof follows the lines of \cite[Thm. 1]{Carmona2018Contract}.
\begin{proposition}
\label{prop:connection-Nash-BSDE}
Assume that Hypothesis~\ref{hyp:q}--\ref{hyp:alpha-lip-in-z} hold true. If \eqref{eq:MKV-basde} admits a solution $(\boldsymbol Y, \boldsymbol Z, \boldsymbol \alpha, \boldsymbol \rho, \mathbb{Q})$ then $(\boldsymbol\alpha,\boldsymbol\rho)$ is a mean-field Nash equilibrium (according to Definition~\ref{def:weak-extended-mfg-equilibrium}). Conversely, if $(\boldsymbol{\hat{\alpha}},\boldsymbol{\hat{\rho}})$ is a mean-field Nash equilibrium then \eqref{eq:MKV-basde} admits a solution $(\boldsymbol Y, \boldsymbol Z, \boldsymbol \alpha, \boldsymbol \rho, \mathbb{Q})$ such that $\boldsymbol\alpha = \boldsymbol{\hat{\alpha}}$, $d\mathbb{P}\otimes dt$-a.s., and $\rho_t = \hat{\rho}_t$, $dt$-a.e.
\end{proposition}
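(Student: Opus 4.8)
The plan is to follow the strategy of \cite[Thm.~1]{Carmona2018Contract}, adapting it to the present setting. The only structural novelty here is that the consistency flow takes values in $\mathcal{R}=\mathcal{P}(A\times E)$ rather than in $\mathcal{P}(E)$, but since this flow is frozen throughout the representative player's optimization problem, it plays no role in that step and enters only through the fixed-point line of \eqref{eq:MKV-basde}; consequently the argument is essentially the same as in the non-extended case. I would prove the two implications separately and, within each, treat conditions $(i)$ and $(ii)$ of Definition~\ref{def:weak-extended-mfg-equilibrium} one at a time.

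\emph{Sufficiency.} Suppose $(\boldsymbol Y,\boldsymbol Z,\boldsymbol{\hat\alpha},\boldsymbol{\hat\rho},\mathbb{Q})$ solves \eqref{eq:MKV-basde}. First I would identify $\mathbb{Q}$ with $\mathbb{Q}^{\boldsymbol{\hat\alpha},\boldsymbol{\hat\rho}}$: the Dol\'eans--Dade exponential $\boldsymbol{\mathcal E}$ in the second line of \eqref{eq:MKV-basde} is, by Girsanov's theorem for pure-jump processes, the density of the measure under which $\boldsymbol X$ has $Q$-matrix $Q(t,\hat\alpha_t,\hat\rho_t)$; the algebraic fact that makes this work is $\psi_s\psi_s^+(Q(s,\hat\alpha_s,\hat\rho_s)-Q^0)^*X_{s-}=(Q(s,\hat\alpha_s,\hat\rho_s)-Q^0)^*X_{s-}$, valid because Hypothesis~\ref{hyp:q}$(i)$ forces that vector to lie in the range of $\psi_s$, and boundedness of $Q$ (Hypothesis~\ref{hyp:q}) yields a uniform bound $\mathbb{E}[\mathcal E_T^{\,p}]<\infty$ for some $p>1$. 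With this identification the $\hat\rho$-line of \eqref{eq:MKV-basde} is exactly condition $(ii)$, so only $(i)$ remains. For $(i)$ I would run a verification argument: for an arbitrary competitor $\boldsymbol\alpha\in\mathbb{A}$, rewrite the $Y$-equation under $\mathbb{Q}^{\boldsymbol\alpha,\boldsymbol{\hat\rho}}$ using that $\mathcal M_t-\int_0^t(Q(s,\alpha_s,\hat\rho_s)-Q^0)^*X_{s-}\,ds$ is a $\mathbb{Q}^{\boldsymbol\alpha,\boldsymbol{\hat\rho}}$-martingale; the $ds$-integrand becomes $\hat H(s,X_{s-},Z_s,\hat\rho_s)-X_{s-}^*(Q(s,\alpha_s,\hat\rho_s)-Q^0)Z_s$, which by the definition of $\hat H$ as the minimized reduced Hamiltonian (Hypothesis~\ref{hyp:minimization-of-H-extended}) is dominated by the running cost $f(s,X_{s-},\alpha_s,\hat\rho_s;\lambda_s)$, with equality if and only if $\alpha_s=\hat a(s,X_{s-},Z_s,\hat\rho_s)$. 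Taking $\mathbb{Q}^{\boldsymbol\alpha,\boldsymbol{\hat\rho}}$-expectations at time $0$ — legitimate since $\boldsymbol Z\in\mathcal H^2_X$, $Q$ is bounded and $\mathcal E_T\in L^p$, so the stochastic integral against the tilted martingale is a true martingale — yields $J^{\boldsymbol\lambda,\xi}(\boldsymbol\alpha,\boldsymbol{\hat\rho})\ge J^{\boldsymbol\lambda,\xi}(\boldsymbol{\hat\alpha},\boldsymbol{\hat\rho})$ for every $\boldsymbol\alpha$, because $\hat\alpha_t=\hat a(t,X_{t-},Z_t,\hat\rho_t)$ by the last line of \eqref{eq:MKV-basde}. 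Hence $(i)$ holds and $(\boldsymbol{\hat\alpha},\boldsymbol{\hat\rho})\in\mathcal N(\boldsymbol\lambda,\xi)$.

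\emph{Necessity.} Conversely, let $(\boldsymbol{\hat\alpha},\boldsymbol{\hat\rho})\in\mathcal N(\boldsymbol\lambda,\xi)$. Freezing $\boldsymbol\rho=\boldsymbol{\hat\rho}$ in the driver of \eqref{eq:MKV-basde}, the map $z\mapsto\hat H(t,e_i,z,\hat\rho_t)$ is Lipschitz in the seminorm $\|\cdot\|_{e_i}$, uniformly in $(t,i)$ and in the principal's policy, by Hypotheses~\ref{hyp:f} and~\ref{hyp:alpha-lip-in-z} together with the bounds of Hypothesis~\ref{hyp:q}$(i)$; with square-integrability of the terminal datum $U(\xi)$ this gives a unique solution $(\boldsymbol Y,\boldsymbol Z)\in\mathcal H^2\times\mathcal H^2_X$ of the $Y$-BSDE by standard well-posedness for BSDEs driven by the compensated chain martingale. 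Set $\boldsymbol{\hat\alpha}':=\big(\hat a(t,X_{t-},Z_t,\hat\rho_t)\big)_{t\in[0,T]}$ and $\mathbb{Q}:=\mathbb{Q}^{\boldsymbol{\hat\alpha}',\boldsymbol{\hat\rho}}$, whose density solves the second line of \eqref{eq:MKV-basde}. By the verification computation above $\boldsymbol{\hat\alpha}'$ minimizes $\boldsymbol\alpha\mapsto J^{\boldsymbol\lambda,\xi}(\boldsymbol\alpha,\boldsymbol{\hat\rho})$, and this minimizer is unique $d\mathbb{P}\otimes dt$-a.e.\ because the Hamiltonian comparison is strict away from $\hat a$ (Hypothesis~\ref{hyp:minimization-of-H-extended}$(i)$). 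Since $\boldsymbol{\hat\alpha}$ is also such a minimizer by condition $(i)$ of Definition~\ref{def:weak-extended-mfg-equilibrium}, we get $\hat\alpha_t=\hat\alpha'_t=\hat a(t,X_{t-},Z_t,\hat\rho_t)$, $d\mathbb{P}\otimes dt$-a.e.; in particular $\mathbb{Q}=\mathbb{Q}^{\boldsymbol{\hat\alpha},\boldsymbol{\hat\rho}}$, and condition $(ii)$ of Definition~\ref{def:weak-extended-mfg-equilibrium} is exactly the $\hat\rho$-line of \eqref{eq:MKV-basde}. Therefore $(\boldsymbol Y,\boldsymbol Z,\boldsymbol{\hat\alpha},\boldsymbol{\hat\rho},\mathbb{Q})$ solves \eqref{eq:MKV-basde}, with the claimed identifications.

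The step I expect to be the main obstacle is the integrability bookkeeping that makes the verification argument rigorous: one must control the Girsanov densities $\mathcal E_T^{\boldsymbol\alpha}$ in $L^p$ for some $p>1$, uniformly over admissible $\boldsymbol\alpha$ and over the principal's policies, and combine this with $\boldsymbol Z\in\mathcal H^2_X$ so that the stochastic integrals against the tilted martingales are genuine (not merely local) martingales under each $\mathbb{Q}^{\boldsymbol\alpha,\boldsymbol{\hat\rho}}$. This is precisely where the two-sided bound and the fixed graph structure in Hypothesis~\ref{hyp:q}$(i)$ are essential, via boundedness of $\psi_s^+$ and of $Q(t,\alpha,\rho)-Q^0$ along the edges of $G$. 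A secondary, milder point is the range identity for $\psi_s\psi_s^+$ underlying the change of measure, together with the upgrade from ``$\boldsymbol{\hat\alpha}'$ is a minimizer'' to ``$\boldsymbol{\hat\alpha}=\boldsymbol{\hat\alpha}'$ a.e.'' in the converse, which rests on the uniqueness of the Hamiltonian minimizer in Hypothesis~\ref{hyp:minimization-of-H-extended}$(i)$.
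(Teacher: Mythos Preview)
Your proposal is correct and follows exactly the approach the paper takes: the paper states that Proposition~\ref{prop:connection-Nash-BSDE} is an adaptation of \cite[Thm.~1]{Carmona2018Contract} to the extended setting and omits the proof entirely, and you have outlined precisely that adaptation, with the same observation that the extended measure flow $\boldsymbol{\hat\rho}\in M(\mathcal R)$ is frozen during the representative agent's optimization and so the verification and BSDE-wellposedness arguments carry over unchanged. Your sketch in fact supplies more detail than the paper does.
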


Given $\boldsymbol Z \in \mathcal{H}^2_X$, $\boldsymbol\lambda \in \Lambda$, and real-valued $\mathcal{F}_0$-measurable $Y_0$, consider under $\mathbb{P}$:
\begin{equation}
\label{eq:Y-ZlambdaY0}
\left\{
\begin{aligned}
    Y^{\boldsymbol Z,\boldsymbol \lambda, Y_0}_t &= Y_0 - \int_0^t \hat{H}(s, X_{s-}, Z_s, \hat \rho^{\boldsymbol Z,\boldsymbol \lambda, Y_0}_s)ds + \int_0^t Z^*_sd\mathcal{M}_s,
    \\
    \mathcal{E}_t &= 1 + \int_0^t \mathcal{E}_{s-}X^*_{s-}\left(Q(s,\hat \alpha^{\boldsymbol Z,\boldsymbol \lambda, Y_0}_s,\hat \rho^{\boldsymbol Z,\boldsymbol \lambda, Y_0}_s) - Q^0\right)\psi^+_sd\mathcal{M}_s,
    \\
    \hat \rho^{\boldsymbol Z,\boldsymbol \lambda, Y_0}_t &= \mathbb{Q}^{\boldsymbol Z, \boldsymbol \lambda, Y_0}\circ\left( \hat{\alpha}^{\boldsymbol Z,\boldsymbol \lambda, Y_0}_t, X_t \right)^{-1}, \ \ 
    \frac{d\mathbb{Q}^{\boldsymbol Z, \boldsymbol \lambda, Y_0}}{d\mathbb{P}} = \mathcal{E}_T,
    \\
    \hat \alpha^{\boldsymbol Z,\boldsymbol \lambda, Y_0}_t &= \hat a(t,X_{t-}, Z_t, \hat \rho^{\boldsymbol Z,\boldsymbol \lambda, Y_0}_t), \ \
    \hat{p}^{\boldsymbol Z,\boldsymbol \lambda, Y_0}_t(\cdot) = \hat{\rho}^{\boldsymbol Z,\boldsymbol \lambda, Y_0}_t(A,\cdot).
\end{aligned}
\right.
\end{equation}
These are the same equations as \eqref{eq:MKV-basde}, except that the dynamic of $\boldsymbol Y$ is written in the forward direction of time. Here $Y_0$ is fixed instead of $Y_T$. 

\begin{hypothesis}[Regularity of the principal's cost]
\label{hyp:cost-in-tilde-V}
\vspace{-4pt}
\begin{itemize}
    \item []
    \item[(i)] The function $U:\mathbb{R}\rightarrow \mathbb{R}$ is invertible.
    \item[(ii)] $c_0, f_0$ are measurable on $[0,T]\times \mathbb{R}^3$.
\end{itemize}
\end{hypothesis}
Consider the following optimal control problem
\begin{equation}
\label{eq:tilde-V}
\begin{aligned}
    \widetilde V(\kappa) &:= \inf_{Y_0:\mathbb{E}[Y_0]\leq \kappa}\inf_{\substack{\boldsymbol Z \in \mathcal{H}^2_X \\ \boldsymbol\lambda \in \Lambda}}\mathbb{E}^{\mathbb{Q}^{\boldsymbol Z, \boldsymbol \lambda, Y_0}}\Bigg[
    \int_0^T \left( c_0\left(t, \hat{p}_t^{\boldsymbol Z, \boldsymbol \lambda, Y_0}\right) + f_0(t,\lambda_t) \right)dt
    \\
    &\hspace{4cm}
    + C_0\left(\hat{p}_T^{\boldsymbol Z, \boldsymbol \lambda, Y_0}\right) + U^{-1}\left(-Y_T^{\boldsymbol Z, \boldsymbol \lambda, Y_0}\right)
    \Bigg],
\end{aligned}
\end{equation}
under the dynamic constraint~\eqref{eq:Y-ZlambdaY0} under $\mathbb{P}$ (the dynamic under $\mathbb{Q}^{\boldsymbol Z, \boldsymbol \lambda, Y_0}$ is given below in~\eqref{eq:Y-Mbis}).
The optimization is now performed not only over the principal's control policy, $\boldsymbol\lambda$, but also over the initial condition $Y_0$ and the $\boldsymbol{Z}$ component. Since we want to find a solution to~\eqref{eq:MKV-basde}, the terminal $Y_T$ must, by definition of the representative agent's problem, equal the utility $U(\xi)$ of the terminal payment $\xi$. This remark allows us to remove $\xi$ from the principal's problem, replacing it by $U^{-1}(Y_T)$.

\begin{proposition}
\label{prop:rewriting}
If Hypothesis~\ref{hyp:q}--\ref{hyp:alpha-lip-in-z}, \ref{hyp:cost-in-tilde-V} hold true, then $\widetilde V(\kappa) = V(\kappa)$.
\end{proposition}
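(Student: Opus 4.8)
The plan is to establish the equality $\widetilde V(\kappa) = V(\kappa)$ by constructing, from any admissible policy for the original problem, a feasible triple $(Y_0, \boldsymbol Z, \boldsymbol\lambda)$ for $\widetilde V(\kappa)$ with the same cost, and conversely. The bridge between the two formulations is Proposition~\ref{prop:connection-Nash-BSDE}, which identifies mean-field Nash equilibria with solutions of the McKean--Vlasov BSDE~\eqref{eq:MKV-basde}, together with the elementary observation that~\eqref{eq:Y-ZlambdaY0} is just~\eqref{eq:MKV-basde} read in the forward time direction with $Y_0$ prescribed in place of $Y_T$.

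First I would prove $\widetilde V(\kappa) \le V(\kappa)$. Take an admissible policy $(\boldsymbol\lambda, \xi)\in\mathcal{C}$ satisfying the participation constraint $J^{\boldsymbol\lambda,\xi}(\mathcal N(\boldsymbol\lambda,\xi))\le\kappa$. Since $\mathcal N(\boldsymbol\lambda,\xi)$ is a singleton, write $(\boldsymbol{\hat\alpha},\boldsymbol{\hat\rho})$ for its unique element; by the converse direction of Proposition~\ref{prop:connection-Nash-BSDE} there is a solution $(\boldsymbol Y,\boldsymbol Z,\boldsymbol{\hat\alpha},\boldsymbol{\hat\rho},\mathbb{Q})$ of~\eqref{eq:MKV-basde}. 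Set $Y_0 := Y_0$ (the initial value of this solution) and keep the same $\boldsymbol Z$ and $\boldsymbol\lambda$. Running~\eqref{eq:Y-ZlambdaY0} forward from this $Y_0$ reproduces the same $\boldsymbol Y,\boldsymbol{\hat\alpha},\boldsymbol{\hat\rho},\mathbb{Q}$ (here one should invoke whatever well-posedness of~\eqref{eq:Y-ZlambdaY0} the paper relies on, or simply note that the tuple from~\eqref{eq:MKV-basde} is by construction a solution of~\eqref{eq:Y-ZlambdaY0}), so $\hat p_t^{\boldsymbol Z,\boldsymbol\lambda,Y_0} = \hat p_t^{\boldsymbol\lambda,\xi}$ for all $t$ and $Y_T^{\boldsymbol Z,\boldsymbol\lambda,Y_0} = U(\xi)$, whence $U^{-1}(-Y_T^{\boldsymbol Z,\boldsymbol\lambda,Y_0}) = U^{-1}(-U(\xi))$. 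I then need to match this with whatever term $U^{-1}(-Y_T)$ is meant to represent in~\eqref{eq:tilde-V} versus the corresponding terminal payment term in the (elided) definition of $J(\boldsymbol\lambda,\xi)$ — so the cost functionals agree term by term, and the constraint $J^{\boldsymbol\lambda,\xi}(\mathcal N(\boldsymbol\lambda,\xi))\le\kappa$ translates into $\mathbb{E}[Y_0]\le\kappa$ because $Y_0 = \mathbb{E}^{\mathbb Q}[U(\xi) + \int_0^T\hat H\,ds\mid\mathcal F_0]$ equals (by the verification argument underlying Proposition~\ref{prop:connection-Nash-BSDE}) the representative agent's equilibrium cost, up to sign conventions. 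This gives a feasible point for $\widetilde V(\kappa)$ with matching cost, hence the inequality.

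For the reverse inequality $V(\kappa)\le\widetilde V(\kappa)$, I would start from an arbitrary feasible triple $(Y_0,\boldsymbol Z,\boldsymbol\lambda)$ for~\eqref{eq:tilde-V}, solve~\eqref{eq:Y-ZlambdaY0} to obtain $\boldsymbol Y,\boldsymbol{\hat\alpha},\boldsymbol{\hat\rho},\mathbb{Q}$, and set $\xi := U^{-1}(Y_T^{\boldsymbol Z,\boldsymbol\lambda,Y_0})$, which is $\mathcal F$-measurable and well-defined by Hypothesis~\ref{hyp:cost-in-tilde-V}(i). Read backward, $(\boldsymbol Y,\boldsymbol Z,\boldsymbol{\hat\alpha},\boldsymbol{\hat\rho},\mathbb{Q})$ solves~\eqref{eq:MKV-basde} with this $\xi$, so by the forward direction of Proposition~\ref{prop:connection-Nash-BSDE}, $(\boldsymbol{\hat\alpha},\boldsymbol{\hat\rho})\in\mathcal N(\boldsymbol\lambda,\xi)$. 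To conclude $(\boldsymbol\lambda,\xi)\in\mathcal C$ I must argue $\mathcal N(\boldsymbol\lambda,\xi)$ is a singleton — this is where I anticipate the main obstacle, since a priori different $Y_0$ (or different $\boldsymbol Z$) could generate other equilibria; I expect the resolution to be that admissibility is defined precisely to exclude this, or that one restricts the infimum in~\eqref{eq:tilde-V} to triples generating a unique equilibrium, so that one should note the infimum in $\widetilde V$ may be taken over such triples without loss, and the elided discussion around $\mathcal C$ handles it. Granting that, the participation constraint $\mathbb{E}[Y_0]\le\kappa$ becomes $J^{\boldsymbol\lambda,\xi}(\mathcal N(\boldsymbol\lambda,\xi))\le\kappa$ by the same identification of $Y_0$ with the equilibrium cost, and the cost functionals again coincide term by term, giving $V(\kappa)\le\widetilde V(\kappa)$ and completing the proof. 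The genuinely delicate points are thus the uniqueness-of-equilibrium bookkeeping for admissibility and the careful sign/convention matching between $Y_0$, $\mathbb{E}[Y_0]$, and the representative agent's equilibrium cost; the rest is a direct translation through Proposition~\ref{prop:connection-Nash-BSDE}.
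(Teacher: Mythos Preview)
Your proposal is correct and matches the paper's approach: the paper itself gives no proof beyond the one-line reference ``The proof follows the lines of \cite[Thm.~2]{Carmona2018Contract}'', and your two-inequality argument via Proposition~\ref{prop:connection-Nash-BSDE} (reading~\eqref{eq:MKV-basde} forward as~\eqref{eq:Y-ZlambdaY0} in one direction and setting $\xi := U^{-1}(Y_T)$ in the other) is exactly that strategy. The concerns you flag about sign conventions and the singleton-equilibrium requirement for admissibility are legitimate bookkeeping points---and indeed the paper's own presentation is not fully consistent on the signs---but they do not indicate a gap in your plan.
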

The proof follows the lines of \cite[Thm. 2]{Carmona2018Contract}.

Our final result says that certain extended MFGs of the type \eqref{eq:cost_of_minors} are equivalent to regular MFGs (where there is no dependence on the distribution of player actions) in the sense that the two problem's Nash equilibria are the same. The key property of games with this feature is that their Hamiltonian and transition rate matrix evaluated at the mean-field Nash equilibrium are functions of the equilibrium state distribution, not the full joint distribution of equilibrium state and control. The property is formalized in the following hypothesis:
\begin{hypothesis}[Properties for Nash equilibria simplification]
\label{hyp:reduction}
\vspace{-4pt}
\begin{itemize}
\item[ ]
\item[(i)] There exists a unique solution $(\boldsymbol{ \hat{Y}, \hat{Z}, \hat{\alpha},\hat{\rho},\hat{\mathbb{Q}}})$ to \eqref{eq:MKV-basde}.
\item[(ii)] There exists measurable functions $\bar{a}_i: [0,T]\times \mathbb{R}^m\times\mathcal{P}(E)\rightarrow A$,  $\bar{f} : [0,T]\times E\times A \times  \mathbb{R}^m\times A\times \mathcal{P}(E) \rightarrow \mathbb{R}$ and $\bar{Q} : [0,T]\times A\times \mathcal{P}(E)$ such that for all $(t,i,z,\rho,\ell)\in[0,T]\times\{1,\dots, m\}\times\mathbb{R}^m\times\mathcal{R}\times\mathbb{R}^m_+$:
\begin{equation}
    \begin{aligned}
            \hat{a}_i(t,z,\hat{\rho}_t) &= \bar{a}_i(t,z,\hat{p}_t),
            \\
            f(t,e_i,\hat{a}_i(t,z,\hat{\rho}_t),\hat{\rho}_t; \ell) &= \bar{f}(t,e_i,\bar{a}_i(t,z,\hat{p}_t),\hat{p}_t; \ell),
            \\
          Q(t,\hat{a}_i(t,z,\hat{\rho}_t),\hat{\rho}_t) 
            &= \bar{Q}(t,\bar{a}_i(t,z,\hat{p}_t),\hat{p}_t),
    \end{aligned}
\end{equation}
where $\hat{p}_t(e_i) := \hat \rho_t(A,e_i)$, $i = 1,\dots, m$.
\item[(iii)] There exists constants $C_1$ and $C_2$, independent of the principal's policy, such that for all $(t,i)\in[0,T]\times \{1,\dots,m\}$, $z,z'\in\mathbb{R}^m$ and $p,p'\in\mathcal{P}(E)$:
\begin{equation}
    |\bar{a}_i(t,z,p) - \bar{a}_i(t,z',p')| \leq C_1\|z-z'\|_{e_i} + (C_1 + C_2\|z\|_{e_i})\|p-p'\|.
\end{equation}
\end{itemize}
\end{hypothesis}
Assuming that hypothesis~\ref{hyp:reduction} is true we define the non-extended mean field Nash equilibrium of the game as follows:
\begin{definition}
\label{def:weak-mfg-equilibrium}
Let $(\boldsymbol{\alpha},\boldsymbol{p}) \in \mathbb{A}\times M(\mathcal{P}(E))$ and denote by $\mathbb{Q}^{\boldsymbol{\alpha,p}}\in\mathcal{P}(\Omega)$ the measure such that the coordinate process $X_t$ has transition rate matrix $\bar{Q}(t,\alpha_t,p_t)$ under $\mathbb{Q}^{\boldsymbol{\alpha,p}}$.
Assume that $(\boldsymbol{\bar{\alpha}},\boldsymbol{\bar p})\in \mathbb{A}\times M(\mathcal{P}(E))$ satisfies
\begin{itemize}
    \item[(i)] $\boldsymbol{\bar\alpha} = \arg\inf_{\alpha\in\mathbb{A}}
    \mathbb{E}^{\mathbb{Q}^{\boldsymbol{\alpha, \bar{p}}}}\left[\int_0^T \bar{f}(t,X_t,\alpha_t, \bar{p}_t)dt - U(\xi)\right]$,
    \item[(ii)] $\forall t\in[0,T], i\in\{1,\dots, m\} :\ \bar{p}_t(i) = \mathbb{Q}^{\boldsymbol{\bar{\alpha}},\boldsymbol{\bar{\rho}}}\left(X_t=e_i\right)$.
\end{itemize}
Then $(\boldsymbol{\bar{\alpha}},\boldsymbol{\bar p})$ is called a non-extended mean field Nash equilibrium.
\end{definition}
The following result allows a simplification of the Nash equilibrium through a non-extended problem. The proof is found in Appendix~\ref{sec:proof-prop-reduction}.
\begin{proposition}
\label{prop:reduction}
Assume Hypothesis~\ref{hyp:q}--\ref{hyp:minimization-of-H-extended}, \ref{hyp:reduction} to be true. Denote the tuple of Hypothesis~\ref{hyp:reduction}(i) by $(\boldsymbol{\hat{Y}}, \boldsymbol{\hat{Z}}, \boldsymbol{\hat{\alpha}}, \boldsymbol{\hat{\rho}},\boldsymbol{\mathbb{Q}})$. The pair $(\boldsymbol{\hat{\alpha}}, \boldsymbol{\hat{\rho}})$ is a mean-field Nash equilibrium.
Let $\hat{p}_t$ be the the $E$-marginal of $\hat{\rho}_t$ and let $(\boldsymbol{\bar{\alpha},\bar{p}})$ be a non-extended mean field Nash equilibrium, satisfying Definition~\ref{def:weak-mfg-equilibrium}. Then $\hat{p}_t = \bar{p}_t$ for $dt$-a.e. $t\in[0,T]$ and $\hat{\alpha}_t = \bar{\alpha}_t\ d\mathbb{P}\otimes dt\text{-a.e.}$.
\end{proposition}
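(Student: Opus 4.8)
Throughout, write $(\boldsymbol{\hat Y},\boldsymbol{\hat Z},\boldsymbol{\hat\alpha},\boldsymbol{\hat\rho},\boldsymbol{\mathbb{Q}})$ for the tuple furnished by Hypothesis~\ref{hyp:reduction}(i) and $\boldsymbol{\hat p}=(\hat p_t)_t$ for the flow of $E$-marginals $\hat p_t(e_i)=\hat\rho_t(A,e_i)$. The first assertion is essentially immediate: since $(\boldsymbol{\hat Y},\boldsymbol{\hat Z},\boldsymbol{\hat\alpha},\boldsymbol{\hat\rho},\boldsymbol{\mathbb{Q}})$ solves~\eqref{eq:MKV-basde}, the ``solution $\Rightarrow$ equilibrium'' half of Proposition~\ref{prop:connection-Nash-BSDE} (whose verification argument uses only Hypotheses~\ref{hyp:q}--\ref{hyp:minimization-of-H-extended}) gives $(\boldsymbol{\hat\alpha},\boldsymbol{\hat\rho})\in\mathcal{N}(\boldsymbol\lambda,\xi)$. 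The substance of the proposition is the identification of $(\boldsymbol{\hat\alpha},\boldsymbol{\hat p})$ with an arbitrary non-extended equilibrium $(\boldsymbol{\bar\alpha},\boldsymbol{\bar p})$, and the plan is threefold: (a) rewrite~\eqref{eq:MKV-basde}, evaluated along its own solution, as a forward--backward system depending only on the state marginal; (b) recognize that reduced system as the BSDE characterization of the non-extended game of Definition~\ref{def:weak-mfg-equilibrium}; (c) lift any non-extended equilibrium back to a solution of~\eqref{eq:MKV-basde} and conclude by the uniqueness in Hypothesis~\ref{hyp:reduction}(i).

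For step (a), observe that along the solution of~\eqref{eq:MKV-basde} we have $X_{t-}=e_i$ for some $i$ and $\hat\alpha_t=\hat a_i(t,\hat Z_t,\hat\rho_t)$, so Hypothesis~\ref{hyp:reduction}(ii), applied at the joint law at hand, replaces $\hat a_i(t,\hat Z_t,\hat\rho_t)$ by $\bar a_i(t,\hat Z_t,\hat p_t)$, replaces $f(t,e_i,\hat a_i(t,\hat Z_t,\hat\rho_t),\hat\rho_t;\lambda_t)$ by $\bar f(t,e_i,\bar a_i(t,\hat Z_t,\hat p_t),\hat p_t;\lambda_t)$, and replaces $Q(t,\hat a_i(t,\hat Z_t,\hat\rho_t),\hat\rho_t)$ by $\bar Q(t,\bar a_i(t,\hat Z_t,\hat p_t),\hat p_t)$. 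Hence the driver $\hat H(s,X_{s-},\hat Z_s,\hat\rho_s)$ becomes a measurable function $\bar H(s,X_{s-},\hat Z_s,\hat p_s)$ of $\hat p_s$ alone, the integrand $X_{s-}^*(Q(s,\hat\alpha_s,\hat\rho_s)-Q^0)\psi_s^+$ of the exponential becomes $X_{s-}^*(\bar Q(s,\hat\alpha_s,\hat p_s)-Q^0)\psi_s^+$, and the consistency relation in~\eqref{eq:MKV-basde} yields $\hat p_t(e_i)=\boldsymbol{\mathbb{Q}}(X_t=e_i)$ together with $\hat\alpha_t=\bar a(t,X_{t-},\hat Z_t,\hat p_t)$. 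Thus $(\boldsymbol{\hat Y},\boldsymbol{\hat Z},\boldsymbol{\hat\alpha},\boldsymbol{\hat p},\boldsymbol{\mathbb{Q}})$ solves a reduced forward--backward system in which all dependence on the joint law has disappeared; since under $\boldsymbol{\mathbb{Q}}$ the coordinate process then has generator $\bar Q(\cdot,\hat\alpha_\cdot,\hat p_\cdot)$ and $\hat p_t(\cdot)=\boldsymbol{\mathbb{Q}}(X_t=\cdot)$, running the verification argument behind Proposition~\ref{prop:connection-Nash-BSDE} for the control problem in Definition~\ref{def:weak-mfg-equilibrium}(i) with the fixed flow $\boldsymbol{\hat p}$ shows that $(\boldsymbol{\hat\alpha},\boldsymbol{\hat p})$ satisfies (i)--(ii) of Definition~\ref{def:weak-mfg-equilibrium}; in particular a non-extended equilibrium exists.

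For step (c), start from an arbitrary $(\boldsymbol{\bar\alpha},\boldsymbol{\bar p})$ satisfying Definition~\ref{def:weak-mfg-equilibrium}. By (i) of that definition $\boldsymbol{\bar\alpha}$ is optimal for a standard control problem for a controlled finite-state chain with fixed flow $\boldsymbol{\bar p}$; the stochastic maximum principle for such problems (the same tool underlying Proposition~\ref{prop:connection-Nash-BSDE}) produces an adjoint pair $(\boldsymbol{\bar Y},\boldsymbol{\bar Z})$ solving the backward part of the reduced system together with the feedback representation $\bar\alpha_t=\bar a(t,X_{t-},\bar Z_t,\bar p_t)$, while (ii) supplies the forward Kolmogorov part. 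Setting $\bar\rho_t:=\mathbb{Q}^{\boldsymbol{\bar\alpha},\boldsymbol{\bar p}}\circ(\bar\alpha_t,X_t)^{-1}$, whose $E$-marginal is $\bar p_t$, and using Hypothesis~\ref{hyp:reduction}(ii) once more to re-express $\bar a$, $\bar f$, $\bar Q$ through $\hat a$, $f$, $Q$ at the joint law $\bar\rho_t$, one checks that $(\boldsymbol{\bar Y},\boldsymbol{\bar Z},\boldsymbol{\bar\alpha},\boldsymbol{\bar\rho},\mathbb{Q}^{\boldsymbol{\bar\alpha},\boldsymbol{\bar p}})$ is a solution of~\eqref{eq:MKV-basde}. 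Hypothesis~\ref{hyp:reduction}(i) then identifies this tuple with $(\boldsymbol{\hat Y},\boldsymbol{\hat Z},\boldsymbol{\hat\alpha},\boldsymbol{\hat\rho},\boldsymbol{\mathbb{Q}})$, whence $\bar\alpha=\hat\alpha$ $d\mathbb{P}\otimes dt$-a.e., $\boldsymbol{\bar\rho}=\boldsymbol{\hat\rho}$, and therefore $\bar p_t=\hat p_t$ for $dt$-a.e. $t$.

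The main obstacle is the bookkeeping around the Hamiltonian minimizers in steps (b) and (c): Hypothesis~\ref{hyp:reduction}(ii) matches the extended data with the non-extended data only \emph{at the respective minimizers}, so one must verify that the feedback map $\bar a$ coming out of the non-extended maximum principle is genuinely the minimizer of the reduced Hamiltonian and that substituting it into the extended Hamiltonian at the induced joint law $\bar\rho_t$ reproduces $\hat a_i(t,\cdot,\bar\rho_t)$. This is exactly where the quantitative Lipschitz bound of Hypothesis~\ref{hyp:reduction}(iii), together with the uniform bounds on $\bar Q$ and on the Radon--Nikodym densities inherited from Hypothesis~\ref{hyp:q}, is needed to make the identification and the accompanying Gronwall-type estimates rigorous; the remaining points (well-posedness and integrability of the adjoint BSDE, measurability of $\bar H$ and $\bar a$, and the $\mathbb{Q}$-versus-$\mathbb{P}$ change-of-measure bookkeeping) are routine and parallel the proof of Proposition~\ref{prop:connection-Nash-BSDE}.
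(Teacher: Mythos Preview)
Your proposal follows a genuinely different route from the paper, and step~(c) contains a real gap. You want to lift an arbitrary non-extended equilibrium $(\boldsymbol{\bar\alpha},\boldsymbol{\bar p})$ to a solution of~\eqref{eq:MKV-basde} by ``using Hypothesis~\ref{hyp:reduction}(ii) once more to re-express $\bar a$, $\bar f$, $\bar Q$ through $\hat a$, $f$, $Q$ at the joint law $\bar\rho_t$''. But Hypothesis~\ref{hyp:reduction}(ii) is stated only at the \emph{specific} flow $\boldsymbol{\hat\rho}$ coming from the unique extended solution: the identities $\hat a_i(t,z,\hat\rho_t)=\bar a_i(t,z,\hat p_t)$, etc., are asserted with $\hat\rho_t$ and $\hat p_t$ fixed, not for an arbitrary joint law and its state marginal. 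There is no assumption letting you replace $\bar a_i(t,\bar Z_t,\bar p_t)$ by $\hat a_i(t,\bar Z_t,\bar\rho_t)$, nor $\bar Q(t,\bar\alpha_t,\bar p_t)$ by $Q(t,\bar\alpha_t,\bar\rho_t)$, at the a~priori different flow $\boldsymbol{\bar\rho}$. So the lift to~\eqref{eq:MKV-basde} cannot be completed without already knowing $\boldsymbol{\bar p}=\boldsymbol{\hat p}$, which is precisely what you are trying to prove. The uniqueness clause~\ref{hyp:reduction}(i) is therefore not accessible at this point.

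The paper avoids this circularity by never attempting to return to the extended system. After your step~(a), both $(\boldsymbol{\hat Y},\boldsymbol{\hat Z},\boldsymbol{\hat\alpha},\boldsymbol{\hat p},\boldsymbol{\mathbb{Q}})$ and the tuple $(\boldsymbol{Y'},\boldsymbol{Z'},\boldsymbol{\alpha'},\boldsymbol{p'},\mathbb{Q}')$ attached (via the non-extended analogue of Proposition~\ref{prop:connection-Nash-BSDE}) to the given $(\boldsymbol{\bar\alpha},\boldsymbol{\bar p})$ solve the \emph{reduced} McKean--Vlasov BSDE built from $\bar H$, $\bar Q$, $\bar a$. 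The paper then compares these two non-extended solutions directly: It\^o's formula for $\|(\hat{\mathcal E}_t-\mathcal E'_t)X_t\|^2$, the Lipschitz bound of Hypothesis~\ref{hyp:reduction}(iii) on $\bar a$, a BSDE stability estimate controlling $\int\|\hat Z-Z'\|_{X_{-}}^2$ by $\int\|\hat p-p'\|^2$, and repeated Gronwall arguments force $\hat p_t=p'_t$ $dt$-a.e., after which the Lipschitz property of $\bar a$ gives $\hat\alpha=\bar\alpha$ $d\mathbb P\otimes dt$-a.e. The vague allusion in your last paragraph to ``Gronwall-type estimates'' is in fact the entire substance of the argument, not a technical afterthought.
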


\section{Numerical approach}
\label{sec:numerics}

In this section, we propose a numerical method to solve the Stackelberg equilibrium. This requires finding the optimal policy of the principal and the associated mean-field Nash equilibrium for the population of agents. The principal's policy influences the Nash equilibrium in a rather intricate way. For this reason, we depart from existing numerical methods for finite state mean field games such as~\cite{MR3873029}, and we propose a probabilistic method in which Monte Carlo samples are generated to train neural networks approximating the optimal controls, including the principal's policy.

\subsection{Monte Carlo simulation}
\label{subsec:MonteCarlo}

From Proposition~\ref{prop:rewriting}, we know that solving the original Stackelberg MFG problem amounts to solving an optimal control problem in which the state can be viewed as $(\boldsymbol{X},\boldsymbol{Y})$ and has a forward dynamics: under $\mathbb{P}$, $\boldsymbol{X}$ is a continuous time Markov chain with $Q$-matrix $Q^0$ and $\boldsymbol{Y}$ satisfies~\eqref{eq:Y-ZlambdaY0}. We recall that the controls are $\boldsymbol Z \in \mathcal{H}^2_X$, $\boldsymbol\lambda \in \Lambda$, and a real-valued $\mathcal{F}_0$-measurable random variable $Y_0$. This problem involves the state and action distribution. We replace this distribution by an empirical distribution obtained with an interacting system of particles and we discretize the time integral. We first note that, given a triple of controls $(\boldsymbol Z,\boldsymbol \lambda, Y_0)$, the dynamic~\eqref{eq:Y-ZlambdaY0} can be written as:
\begin{equation}
\label{eq:Y-Mbis}
\left\{
\begin{aligned}
    Y^{\boldsymbol Z,\boldsymbol \lambda, Y_0}_t &= Y_0 - \int_0^t f(s, X_{s-}, \hat{\alpha}^{\boldsymbol Z,\boldsymbol \lambda, Y_0}_t, \hat \rho^{\boldsymbol Z,\boldsymbol \lambda, Y_0}_s; \lambda_s)ds + \int_0^t Z^*_sd\mathcal{M}^{\boldsymbol Z,\boldsymbol \lambda, Y_0}_s,
    \\
    \mathcal{E}_t &= 1 + \int_0^t \mathcal{E}_{s-}X^*_{s-}\left(Q(s,\hat \alpha^{\boldsymbol Z,\boldsymbol \lambda, Y_0}_s,\hat \rho^{\boldsymbol Z,\boldsymbol \lambda, Y_0}_s) - Q^0\right)\psi^+_sd\mathcal{M}_s,
    \\
    \hat \rho^{\boldsymbol Z,\boldsymbol \lambda, Y_0}_t &= \mathbb{Q}^{\boldsymbol Z, \boldsymbol \lambda, Y_0}\circ\left( \hat{\alpha}^{\boldsymbol Z,\boldsymbol \lambda, Y_0}_t, X_t \right)^{-1}, \ \ 
    \frac{d\mathbb{Q}^{\boldsymbol Z, \boldsymbol \lambda, Y_0}}{d\mathbb{P}} = \mathcal{E}_T,
    \\
    \hat \alpha^{\boldsymbol Z,\boldsymbol \lambda, Y_0}_t &= \hat \alpha(t,X_{t-}, Z_t, \hat \rho^{\boldsymbol Z,\boldsymbol \lambda, Y_0}_t; \lambda_s), \ \
    \hat{p}^{\boldsymbol Z,\boldsymbol \lambda, Y_0}_t(\cdot) = \hat{\rho}^{\boldsymbol Z,\boldsymbol \lambda, Y_0}_t(A,\cdot),
\end{aligned}
\right.
\end{equation}
where the process $\boldsymbol{\mathcal{M}}^{\boldsymbol Z,\boldsymbol \lambda, Y_0}$ is defined by:
$$
    \mathcal{M}_t^{\boldsymbol Z,\boldsymbol \lambda, Y_0}
    = \mathcal{M}_t - \int_0^t X_{s-}^* \left(Q(s,\hat \alpha^{\boldsymbol Z,\boldsymbol \lambda, Y_0}_s,\hat \rho^{\boldsymbol Z,\boldsymbol \lambda, Y_0}_s) - Q^0\right) ds, 
$$
is a $\mathbb{Q}^{\boldsymbol Z,\boldsymbol \lambda, Y_0}$-martingale. Furthermore, from this definition we see that the canonical process $\boldsymbol{X}$ satisfies, under $\mathbb{Q}^{\boldsymbol Z,\boldsymbol \lambda, Y_0}$,
\begin{equation}
\label{eq:X-Mbis}
    X_t = X_0 + \int_0^t X_{s-}^* Q(s,\hat \alpha^{\boldsymbol Z,\boldsymbol \lambda, Y_0}_s,\hat \rho^{\boldsymbol Z,\boldsymbol \lambda, Y_0}_s) ds + \mathcal{M}_t^{\boldsymbol Z,\boldsymbol \lambda, Y_0}.
\end{equation}
In other words, under the probability measure $\mathbb{Q}^{\boldsymbol Z,\boldsymbol \lambda, Y_0}$, the intensity rate of $\boldsymbol{X}$ is given by $Q(s,\hat \alpha^{\boldsymbol Z,\boldsymbol \lambda, Y_0}_s,$ $\hat \rho^{\boldsymbol Z,\boldsymbol \lambda, Y_0}_s)$.

To simulate Monte Carlo trajectories, we will use the expressions~\eqref{eq:Y-Mbis}--\eqref{eq:X-Mbis}. For simplicity of the implementation, we assume that given any admissible policy $(\boldsymbol{\lambda}, \xi)$, we can express the induced equilibrium control $\boldsymbol{\hat{\alpha}}$ as a function of $\boldsymbol{\hat{p}}$, the flow of second marginals of the equilibrium distribution flow $\boldsymbol{\hat{\rho}}$. To wit, for every $(\boldsymbol{\lambda}, \xi)$, denoting $(\boldsymbol{Y},\boldsymbol Z,\boldsymbol{\hat{\alpha}}, \boldsymbol{\hat{\rho}}, \mathbb{Q})$ the solution to~\eqref{eq:MKV-basde}, we assume that there exists $\check{a}:[0,T] \times E \times \mathbb{R}^m \times \mathcal{P}(E) \to \mathbb{R}$ such that 
$$
    \hat{\alpha}_t = \check{a}\left(t,X_t,Z_t,\hat{p}_t\right),
$$
where $\hat{p}_t = \hat{\rho}_t(A,\cdot)$ is the state marginal of $\hat{\rho}$. This is automatically true if the minimizer $\hat{a}$ of the Hamiltonian is independent of the first marginal of $\rho$, i.e., 
 $$
    \hat{a}_i(t,z,\rho) = \check{a}_i(t, z,\rho(A,\cdot)),
 $$
 for a function $\check{a}_i : [0,T] \times \mathbb{R}^m \times \mathcal{P}(E) \to \mathbb{R}$, 
which is often assumed to be true in extended MFGs, see e.g.~\cite[Assumption 3.4]{Carmona2018Extended} in the finite space setting and~\cite{laurieretangpi2019backward,laurieretangpi2020convergence} in the continuous space setting.  However, it also holds in more general situations, for instance when the equilibrium control can be expressed in terms of the solution to a forward-backward PDE system~\cite{gomes2014existence,chan2015bertrand,kobeissi2019classical} in the continuous space setting. In such cases, the equilibrium control is expressed as a feedback function of $(t,x)$ related to the backward PDE, and the distribution of actions can be recovered from this feedback control and the state distribution related to the forward PDE.

We now present the scheme with a finite number of particles and discrete time steps. We consider $N>0$ particles and denote $\llbracket N \rrbracket = \{1,\dots,N\}$ the set of indexes. Given measurable control functions $z: [0,T] \times E \to \mathbb{R}^m, \lambda: [0,T] \to \mathbb{R}^m_+, y_0: E \to \mathbb{R}$, we construct trajectories $(X^i_{t_n},Y^i_{t_n})_{n, i=1,\dots,N}$. After initialization, we proceed iteratively for every $n \ge 1$ while $t_n \le T$: $X^i_{t_n}$ is sampled with $Q$-matrix given by 
$$
    Q^{i}_{t_n} := Q(t_n, \alpha^{i}_{t_n}, \overline{\rho}^{N}_{t_n}),
$$ 
where 
$$
    \overline{\rho}^{N}_{t_n}
    =
    \frac{1}{N} \sum_{i=1}^N \delta_{\left(X^i_{t_n},  \alpha^{i}_{t_n}\right)}
$$
is the empirical action-statae  distribution, and
$$
     \alpha^{i}_{t_n}
    =
    \check{a}(t,X^i_{t_n}, z(t_n, X^i_{t_n}), \overline{p}^{N}_{t_n}), \qquad \hbox { where } \overline{p}^{N}_{t_n}
    =
    \frac{1}{N} \sum_{i=1}^N \delta_{X^i_{t_n}}.
$$
Based on~\eqref{eq:X-Mbis}, we define:
$$
    \Delta \mathcal{M}_{t_n}^{i}
    =
    - (X^i_{t_{n+1}} - X^i_{t_{n}}) - (X^i_{t_n})^* Q^{i}_{t_n} (t_{n+1} - t_n),
$$
and then, based on~\eqref{eq:Y-Mbis}, we let: $Y^i_{0} = y(X^i_0)$ and for $n\ge 0$,
\begin{align*}
    Y^i_{t_{n+1}} 
    &= Y^i_{t_{n}} - f(t_n, X^i_{t_n}, {\alpha}^{i}_{t_n}, \overline{\rho}^{N}_{t_n}; \lambda(t_n)) (t_{n+1} - t_{n}) 
    + z(t_n, X^i_{t_n})^* \Delta \mathcal{M}_{t_n}^{i}.
\end{align*}
Here $t_n$ corresponds to the time of the $n$-th jump in the particle system $(\boldsymbol X^i)_{i=1,\dots,N}$. In the implementation, these trajectories are constructed using a time marching procedure from time $0$ until time $T$, with steps corresponding to jumps. For more details, see Algorithm~\ref{algo:simu-forward-XY}.

\subsection{Approximation based on neural networks}
\label{subsec:NN_approx}

In order to have a problem amenable to numerical treatment, we replace the controls $(\boldsymbol Z,\boldsymbol \lambda, Y_0)$ by parameterized functions $z_{\theta_1}: [0,T] \times E \to \mathbb{R}^m$, $\lambda_{\theta_2}: [0,T] \to \mathbb{R}^m_+$, and $y_{0,\theta_3}: E \to \mathbb{R}$ with respective parameters $\theta_1, \theta_2, \theta_3$.  Since the number of states $m$ is potentially large, we choose to use neural networks and, to be specific, in the implementation we take feedforward fully connected neural networks. Then, taking into account the time discretization and the approximation using a finite number of particles as described above, instead of~\eqref{eq:tilde-V}, and considering a finite number $M$ of Monte Carlo samples, the goal is now to minimize over $\theta = (\theta_1,\theta_2,\theta_3)$ the objective function: 
\begin{equation}
\label{eq:J-NN}
\begin{aligned}
    \mathbb{J}^N(\theta)
    =
    & \frac{1}{M} \sum_{j=1}^M \Bigg[
    \sum_{n=0}^{n_{tot}-1} \left( c_0\left(t_n, \bar{p}_{t_n}^{j, N, \theta}\right) 
    + f_0(t_n,\lambda_{\theta_2}(t_n)) \right) (t_{n+1} - t_{n})
    \\
    &\hspace{2.5cm}
    + C_0\left(\bar{p}_T^{j, N, \theta}\right) + \frac{1}{N} \sum_{i=1}^N U^{-1}\left(-Y_T^{j,i,\theta}\right)
    \Bigg]
\end{aligned}
\end{equation}
where, for $j=1,\dots,M$, $(\boldsymbol{Y}^{j, i, \theta})_{i \in \llbracket N \rrbracket}$ and $\boldsymbol{\bar{p}}^{j, N, \theta}$ are constructed by Algorithm~\ref{algo:simu-forward-XY} using $(z,\lambda,y_0) = (z_{\theta_1}, \lambda_{\theta_2}, y_{0,\theta_3})$. Intuitively, in the limit when $M$, $N$ and the number of parameters $\theta$ go to infinity, we would expect $\inf_\theta \mathbb{J}^N(\theta)$ to converge to the principal's optimal cost.

To optimize over $\theta = (\theta_1,\theta_2,\theta_3)$, we rely on a variant of stochastic gradient descent (namely the Adaptive Moment Estimation algorithm). This kind of methods is particularly well suited to the minimization of $\mathbb{J}^N$ in~\eqref{eq:J-NN} since on the one hand the number of parameters in deep neural networks is potentially large and on the other hand this cost is written as an expectation and can thus be computed using Monte Carlo samples. Our method can be viewed as an adaptation of the second algorithm in~\cite{carmona2019convergenceFinite} to the finite state case and its generalization to the Stackelberg setting with a principal. More precisely, we introduce for a sample $S = (X^i_{t_n},Y^i_{t_n},Z^i_{t_n})_{n =0,\dots,n_{tot}, i \in \llbracket N \rrbracket}$ of a population of $N$ particles:
\begin{equation}
\label{eq:J-NN-Sample}
\begin{aligned}
    \mathbb{J}^N_{S}(\theta)
    =
    & 
    \sum_{n=0}^{n_{tot}-1} \left( c_0\left(t_n, \bar{p}_{t_n}^{N, \theta}\right) 
    + f_0(t_n,\lambda_{\theta_2}(t_n)) \right) (t_{n+1} - t_{n})
    \\
    &\hspace{2.5cm}
    + C_0\left(\bar{p}_T^{N, \theta}\right) + \frac{1}{N} \sum_{i=1}^N U^{-1}\left(-Y_T^{i,\theta}\right)
\end{aligned}
\end{equation}
where $\overline{p}^{N}_{t_n} = \frac{1}{N} \sum_{i=1}^N \delta_{X^i_{t_n}}$. Note that~\eqref{eq:J-NN} is simply the average of $\mathbb{J}^N_{S}$ over $M$ samples (here one sample corresponds to the trajectories of one population). See Algorithm~\ref{algo:SGD-SMFG} for more details on the stochastic gradient descent (SGD) method applied in the context of Stackelberg mean field game.

\begin{remark}
If the regulator is not optimizing its own outcomes, in other words if the policies set by the regulator, $(\boldsymbol{\lambda}, \xi)$, are exogenous, the neural network approach can be still used. Since $\boldsymbol{\lambda}$ is known in Section~\ref{subsec:MonteCarlo}, we can write system \eqref{eq:Y-Mbis} controlled only by $\boldsymbol{Z}$ and $Y_0$. Then in Section~\ref{subsec:NN_approx}, we replace controls $(\boldsymbol{Z}, Y_0)$ by parameterized functions $z_{\theta_1}: [0,T] \times E \to \mathbb{R}^m$ and $y_{0,\theta_2}: E \to \mathbb{R}$ with respective parameters $\theta_1, \theta_2$. Then considering a finite number M of Monte Carlo samples with N particles in each, the goal is to minimize over $\theta=(\theta_1, \theta_2)$ the loss function:
\begin{equation}
    \mathbb{L}^N(\theta)= \frac{1}{M}\sum_{j=1}^M\Big[\big(U(\xi) - \frac{1}{N} \sum_{i=1}^{N} Y^{j,i, \theta}_T\big)^2\Big]
\end{equation}
\end{remark}

\begin{algorithm}
\caption{Monte Carlo simulation an interacting batch \label{algo:simu-forward-XY}}
\textbf{Input:} Transition rate matrix $Q$; number of particles $N$; time horizon $T$; initial distribution $p_{0}$; control functions $\lambda, y_0, z$

\textbf{Output:} Approximate sampled trajectories of $(\boldsymbol X,\boldsymbol Y,\boldsymbol Z)$ solving~\eqref{eq:Y-Mbis}--\eqref{eq:X-Mbis}
\begin{algorithmic}[1]
\STATE{Let $n=0$, $t_0 = 0$; pick $X^i_0 \sim p^{0}$ i.i.d and set $Y^i_0 = y_0(X^i_0), i \in \llbracket N \rrbracket$} 
\WHILE{$t_n \le T$}
    \STATE{Set $ Z^i_{t_n} = z({t_n}, X^i_{t_n}), \alpha^i_{t_n} = \check{a}({t_n},X^i_{t_n}, Z^i_{t_n}, p_{t_n}), i \in \llbracket N \rrbracket$}
    \STATE{Let $\bar \rho^N_{t_n} = \frac{1}{N} \sum_{i=1}^N \delta_{(X^i_{t_n}, \alpha^i_{t_n})}$ and $\bar p^N_{t_n} = \frac{1}{N} \sum_{i=1}^N \delta_{X^i_{t_n}}$ }
    \STATE{Pick $(T^{i,e})_{e \in E, i \in \llbracket N \rrbracket}$ i.i.d. with exponential distribution of parameter $1$ }
    \STATE{Set the holding times: $\tau^{i,e} = T^{i,e} / Q_{X^i_{t_n},e}(t_n, \alpha^i_{t_n}, \bar\rho^N_{t_n})$, $i \in \llbracket N \rrbracket, e \in E$ }
    \STATE{Let $e^{i}_{\star} = \argmin_{e \in E} \tau^{i,e}$ and $\tau^{i}_{\star} = \tau^{i,e^{i}_{\star}} = \min_{e \in E} \tau^{i,e}, i \in \llbracket N \rrbracket$ } 
    \STATE{Let $i_{\star} = \argmin_{i \in \llbracket N \rrbracket} \tau^{i}_{\star}$ be the first particle to jump }
    \STATE{Let $\Delta t = \tau^{i_{\star}}_{\star}$ be the time increment }
    \STATE{Set $X^{i_{\star}}_{t_n+\Delta t} = e^{i_{\star}}_{\star}$, and for every $i \neq i_{\star}$, set $X^{i}_{t_n+\Delta t} = X^{i}_{t_n}$ }
    \STATE{Let $\Delta M^{i}_{t_n} = X^{i}_{t_n+\Delta t} - X^{i}_{t_n} -  (X^{i}_{t_n})^* Q(t_n, \alpha^i_{t_n}, \bar\rho^N_{t_n})  \Delta t$, $i \in \llbracket N \rrbracket$}
    \STATE{Let 
        $
            Y^{i}_{t_n+\Delta t} = Y^{i}_{t_n} - f(t, X^{i}_{t_n}, \alpha^{i}_{t_n}, \bar\rho^N_{t_n}; \lambda(t_n))\Delta t + (Z^{i}_{t_n})^* \Delta M^{i}_{t_n}
        $, $ i \in \llbracket N \rrbracket$
    }
    \STATE{Set $n=n+1$ and $t_n = t_{n-1}+\Delta t$}
\ENDWHILE
\STATE{Set $n_{tot} = n, t_{n_{tot}} = T,  (X^i_{t_{n_{tot}}},Y^i_{t_{n_{tot}}},Z^i_{t_{n_{tot}}}) = (X^i_{t_{n_{tot}-1}},Y^i_{t_{n_{tot}-1}},Z^i_{t_{n_{tot}-1}})$} 
\RETURN $(X^i_{t_n},Y^i_{t_n},Z^i_{t_n})_{n =0,\dots,n_{tot}, i \in \llbracket N \rrbracket}$ and $(t_n)_{n =0,\dots,n_{tot}}$
\end{algorithmic}
\end{algorithm}

\begin{algorithm}
\caption{SGD for Stackelberg Mean Field Game\label{algo:SGD-SMFG}}

\textbf{Input:} Initial parameter $\theta_0$; number of iterations $K$; sequence $(\beta_k)_{k = 0, \dots, K-1}$ of learning rates; transition rate matrix $Q$; number of particles $N$; time horizon $T$; initial distribution $p_{0}$

\textbf{Output:} Approximation of $\theta^*$ minimizing $\mathbb{J}^N$ defined by ~\eqref{eq:J-NN}
\begin{algorithmic}[1]

\FOR{$k = 0, 1, 2, \dots, K-1$}
    \STATE{Sample $S = (X^i_{t_n},Y^i_{t_n},Z^i_{t_n})_{n =0,\dots,n_{tot}, i \in \llbracket N \rrbracket}$ and $(t_n)_{n =0,\dots,n_{tot}}$ using Algorithm~\ref{algo:simu-forward-XY} with control functions $(z, \lambda, y_0) = (z_{\theta_{k,0}}, \lambda_{\theta_{k,1}}, y_{0,\theta_{k,2}})$  and parameters:  transition rate matrix $Q$; number of particles $N$; time horizon $T$; initial distribution $p_{0}$ }
    \STATE{Compute the gradient $\nabla \mathbb{J}^N_{S}(\theta_{k})$ of $\mathbb{J}^N_{S}(\theta_{k})$ defined by~\eqref{eq:J-NN-Sample} }
    \STATE{Set $\theta_{k+1} =  \theta_{k} -\beta_k \nabla \mathbb{J}^N_{S}(\theta_{k})$ }
\ENDFOR

\RETURN $\theta_{K}$
\end{algorithmic}
\end{algorithm}

\section{Experimental results}
\label{subsec:experiments}

\subsection{SIR equilibrium with a fixed policy}

Our first numerical experiment is an extended MFG with SIR dynamics. The principal is not active in this experiment; we think of the principal as a static regulator who has beforehand declared a fixed policy. The purpose is two-fold. Firstly, since a semi-explicit solution is attainable the numerical method's accuracy can be evaluated (see Fig.~\ref{fig:LL-ode} and \ref{fig:LL-NN}), and secondly, we illuminate the game-aspect of the model and how the agents are prone to ``cheat" a little and use a less conservative socialization protocol than the incentivized one.

We begin by recalling the problem introduced in Section~\ref{sec:intro-SIRMFG} and we focus here on the limiting problem with a continuum of agents. The representative agent evolves according to the rate matrix $Q(t,\alpha_t,\rho_t)$ given in \eqref{eq:SIR-Qmatrix-intro} and the running cost in \eqref{eq:cost-agents-SIR1-intro}. As stated in Section~\ref{sec:intro-SIRMFG}, the running cost does not depend on the extended mean field, however the dynamic does, making the problem an extended MFG. The representative agent's terminal utility is $U(\xi) = \xi$.

\begin{table}[h!]
\caption{Parameter values, policies, and contact factor in the four numerical experiments on the SIR extended MFG. The regulator declares a fixed policy $(\boldsymbol\lambda, \xi)$.}
\centering
\ra{1.4}
\begin{tabular}{@{}llccccccc@{}}
\toprule
\textbf{Test case} & \textbf{Contact factor} &
$\xi$ & $\lambda^{(S)}_t$ & $\lambda^{(I)}_t$ & $\lambda^{(R)}_t$
\\
\midrule
Free spread & Constant & $0$ & $1$ & $1$ & $1$
\\ 
No lockdown & MF Nash eq. & $0$ & $1$ & $1$ & $1$
\\ 
Late lockdown & MF Nash eq.  & $0$ & $1 - 0.3\mathbbm{1}_{t> 40}$ & $0.9 - 0.3\mathbbm{1}_{t>40}$ & $1$
\\ 
Early lockdown & MF Nash eq. & 0 & $1 - 0.3\mathbbm{1}_{t\leq 10}$ & $0.9 - 0.3\mathbbm{1}_{t\leq 10}$ & $1$
\\ 
\bottomrule
\end{tabular}
\begin{tabular}{@{}lcccccccc@{}}
\textbf{Parameter} & $T$ & $p^0$ &
$c_\lambda$ & $c_I$ & $\beta$ & $\gamma$ & $\eta$
\\
\midrule
\textbf{Value in tests} & $50$ & $(0.9,0.1,0)$ & $10$ & $1$ & $0.25$ & $0.1$ & $0$
\\ 
\bottomrule
\end{tabular}
\label{table:test1}
\end{table}

Given $(\boldsymbol\lambda,\xi)$, the mean field Nash equilibrium can be reduced to the solution of a system of forward-backward ODEs as we explain in more details below. We solve the ODEs by iteratively solving the forward and the backward equations, until the distance between two iterates is small enough.

Since the neural network based method we propose is new, we consider a testbed on which we can assess its correctness using a well-studied approach, which serves as a benchmark. 
For any given $(\boldsymbol\lambda,\xi)$, the mean field Nash equilibrium can be reduced to the solution of a system of forward-backward ODEs, in the spirit of, for example, \cite[Section 7.2.2]{Carmona2018book}. The forward equation describes the evolution of the population distribution while the backward characterizes the value function of an infinitesimal agent:
\begin{equation}
    \begin{aligned}
    \label{eq:ODE-SIR-MFG}
    \dot p_t(S) &= - \beta\Big(\lambda_t^{(S)} + \frac{\beta}{c_{\lambda}} \lambda_t^{(I)} p_t(I) (u_t(S)- u_t(I))\Big) \lambda^{(I)} p_t(S) p_t(I) + \eta p_t(R),
    \\
    \dot p_t(I) &= \beta\Big(\lambda_t^{(S)} + \frac{\beta}{c_{\lambda}} \lambda_t^{(I)} p_t(I) (u_t(S)- u_t(I))\Big) \lambda^{(I)} p_t(S) p_t(I) - \gamma p_t(I),
    \\
    \dot p_t(R) &=\gamma p_t(I)  - \eta  p_t(R)
    \\
    \dot u_t(S) &= \beta \lambda^{(S)} \lambda^{(I)} p_t(I) (u_t(S)-u_t(I)) + \frac{1}{2 c_{\lambda}} \big(\beta \lambda^{(I)}p_t(S)(u_t(S)-u_t(I))\big)^2
    \\
    \dot u_t(I) &= \gamma(u_t(I)-u_t(R)) - c_I
    \\
    \dot u_t(R) &= \kappa(u_t(R)-u_t(S)) 
    \\
    u_T(e)&= 0,\quad p_0(e) = p_0^{e},\quad e\in \{S, I, R\},
    \end{aligned}
\end{equation}

 They are, in the finite state MFG, the counterparts to Kolmogorov-Fokker-Planck and Hamilton-Jacobi-Bellman partial differential equations arising in continuous space MFGs.  For more details on the derivation of these ODEs for a (slightly different) class of finite-state MFGs, we refer the reader to~\cite[Section 7.2.2]{Carmona2018book}. 

The six equations are coupled, reflecting the fact that an agent cannot compute their value function (and their optimal control) without knowing the population evolution when in Nash equilibrium, and the population distribution cannot be computed without knowing the controls chosen by the agents. For this reason, we cannot solve one equation before the other. We propose to solve this system by solving iteratively the forward and the backward ODEs in turn, plugging the solution of the previous iteration in the equation at the current iteration. To implement this strategy, we discretize time and replace the distribution and the value function by vectors (see Algorithm~\ref{algo:ODE-MFG}). In our implementation, we used an explicit Euler scheme.

\begin{algorithm}
\caption{ODE Approach for the finite-state Mean Field Game\label{algo:ODE-MFG}}
\textbf{Input:} Time horizon $T$; Time Increments $\Delta t$; Initial discretized flow of state distribution and value function $\boldsymbol{p}^{(0)} = \{p_0,p_{\Delta t},p_{2\Delta t}, \dots, p_{T}\}$ and $\boldsymbol{u}^{(0)}   = \{u_0, u_{\Delta t}, u_{2\Delta t}, \dots, u_{T}\}$; initial state distribution $p_{0}$; terminal condition of value function $u_T$; Tolerance $\tau$

\textbf{Output:} Equilibrium discretized flow of state distribution and corresponding value function: $(\hat p_0, \hat p_{\Delta t}, \dots,\hat p_{T})$ and $(\hat u_0, \hat u_{\Delta t}, \dots, \hat u_{T})$ 
\begin{algorithmic}[1]
\STATE{$k \leftarrow 0$}
\WHILE{$||\boldsymbol{p}^{(k)}-\boldsymbol{p}^{(k-1)}||>\tau$ or $||\boldsymbol{u}^{(k)}-\boldsymbol{u}^{(k-1)}||>\tau$}
    \STATE{Compute $\boldsymbol{p}^{(k+1)}$ solving the forward equation in~\eqref{eq:ODE-SIR-MFG} with $\boldsymbol{u}$ replaced by $\boldsymbol{u}^{(k)}$
    }
    \STATE{Compute $\boldsymbol{u}^{(k+1)}$ solving the backward equation in~\eqref{eq:ODE-SIR-MFG} with $\boldsymbol{p}$ replaced by  $\boldsymbol{p}^{(k+1)}$}
    \STATE{
    Update $\boldsymbol{p}^{(k-1)} \leftarrow \boldsymbol{p}^{(k)}$, $\boldsymbol{p}^{(k)}\leftarrow\boldsymbol{p}^{(k+1)}$, $\boldsymbol{u}^{(k-1)}\leftarrow\boldsymbol{u}^{(k)}$, $\boldsymbol{u}^{(k)}\leftarrow\boldsymbol{u}^{(k+1)}$}
    \STATE{$k \leftarrow k+1$}
\ENDWHILE
\RETURN $\boldsymbol{p}^{(k)}$ and $\boldsymbol{u}^{(k)}$
\end{algorithmic}
\end{algorithm}

When we analyze Figure~\ref{fig:LL-ode} and Figure~\ref{fig:LL-NN} (also Figure~\ref{fig:EL-ode} and~\ref{fig:EL-NN}), we see that neural network based method is accurately computes the results obtained by the ODE method.

\begin{figure}[H]
\centering
\begin{subfigure}{.32\textwidth}
    \includegraphics[width=5.1cm,height=3.3cm]{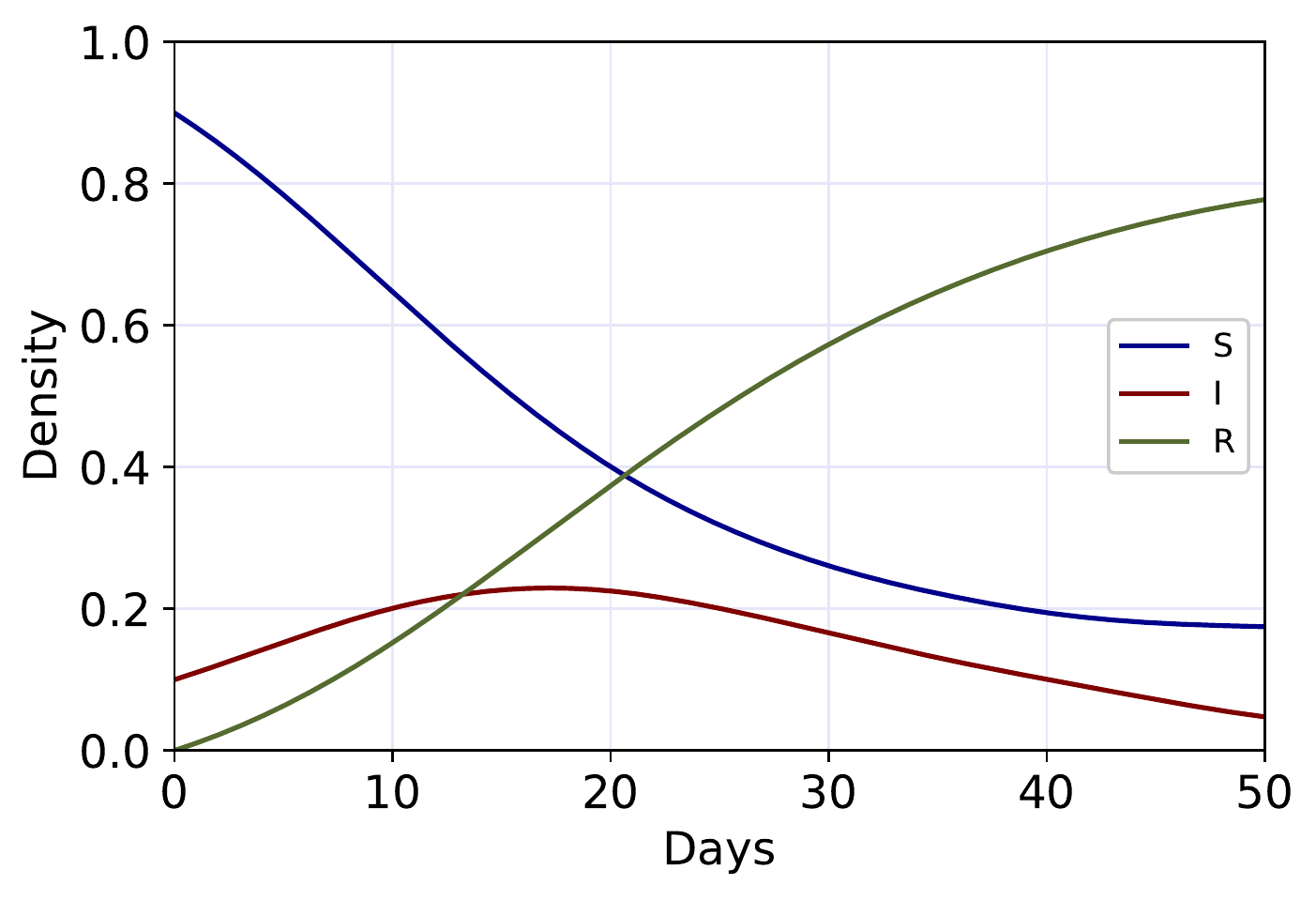}
\end{subfigure}
\hfill
\begin{subfigure}{.32\textwidth}
    \includegraphics[width=5.1cm,height=3.3cm]{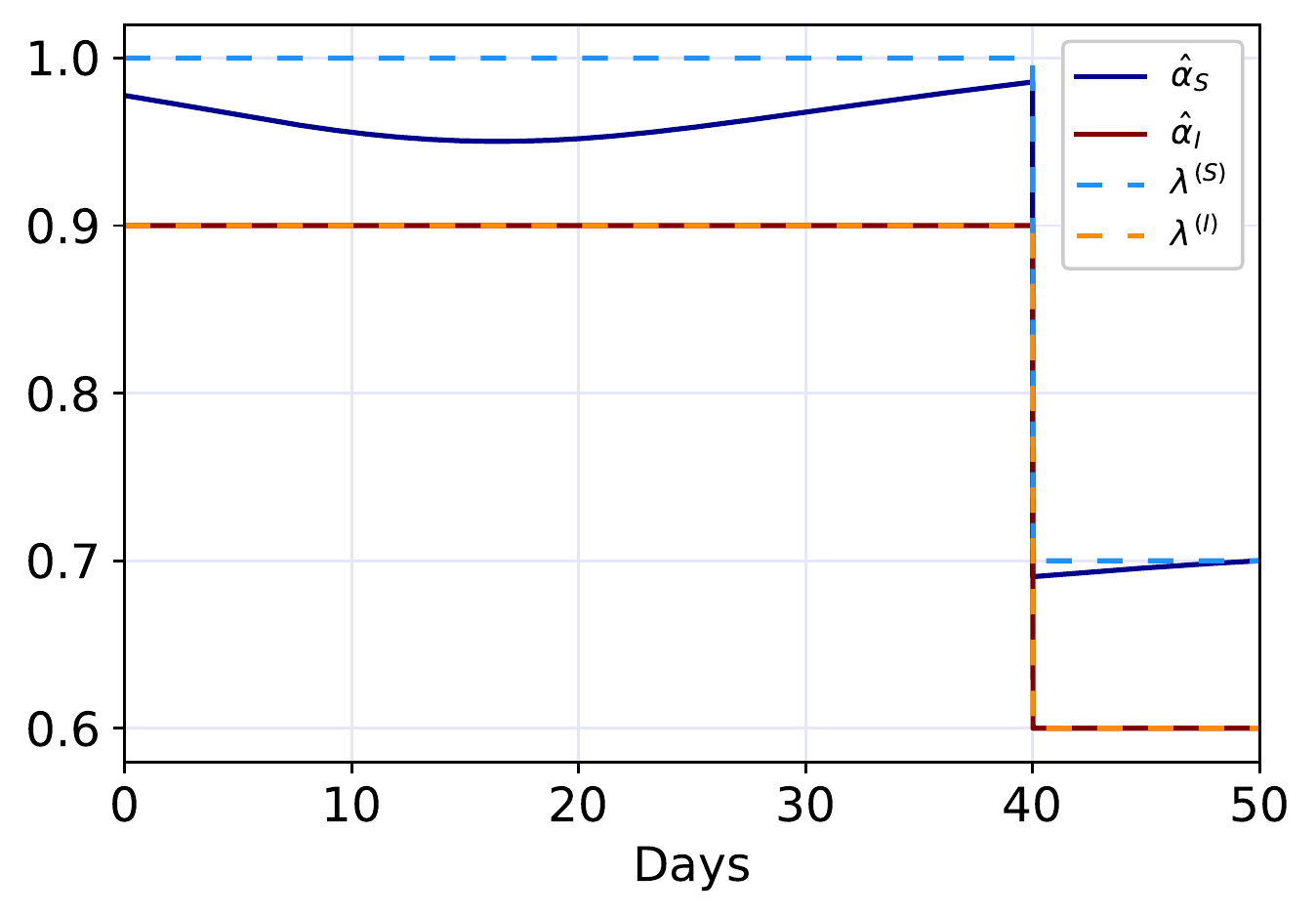}
\end{subfigure}
\hfill
\begin{subfigure}{.32\textwidth}
    \includegraphics[width=5.1cm,height=3.3cm]{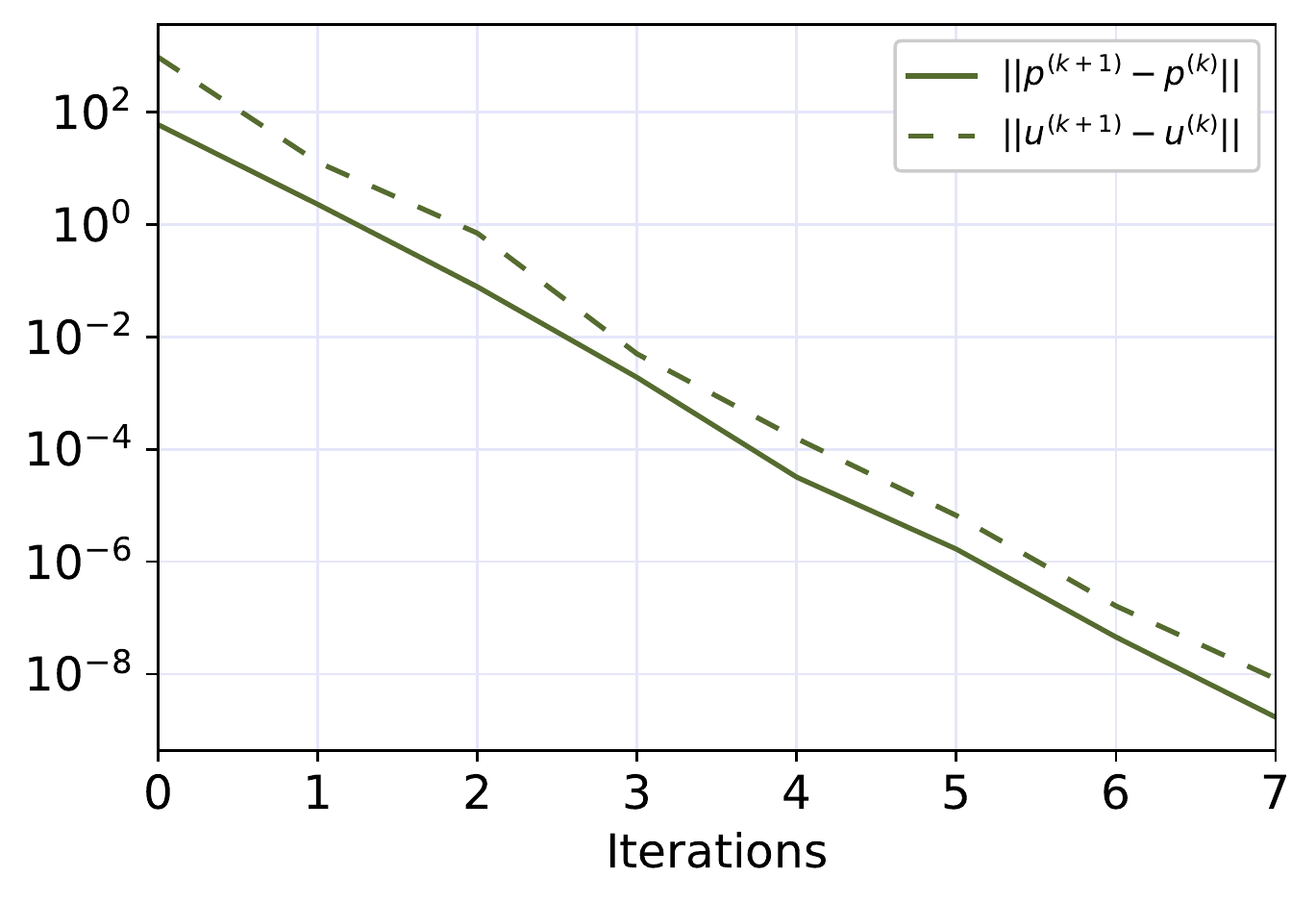}
\end{subfigure}
\caption{Late lockdown with the ODE solver. Evolution of the population state distribution (left), evolution of the controls (middle), convergence of the solver (right).}
\label{fig:LL-ode}
\end{figure}

\begin{figure}[H]
\centering
\begin{subfigure}{.32\textwidth}
    \includegraphics[width=5.5cm,height=3.5cm]{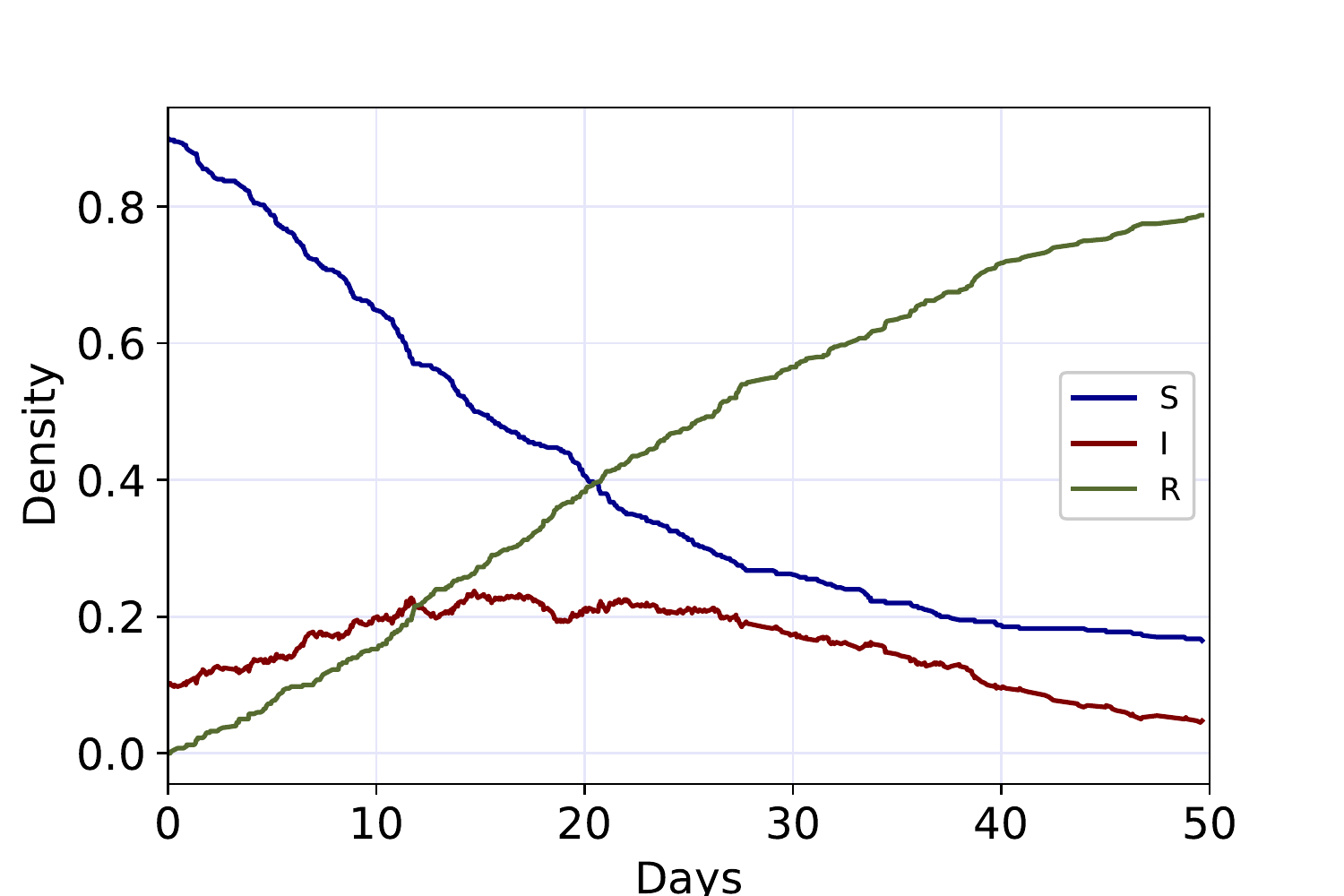}
\end{subfigure}
\hfill
\begin{subfigure}{.32\textwidth}
    \includegraphics[width=5.5cm,height=3.5cm]{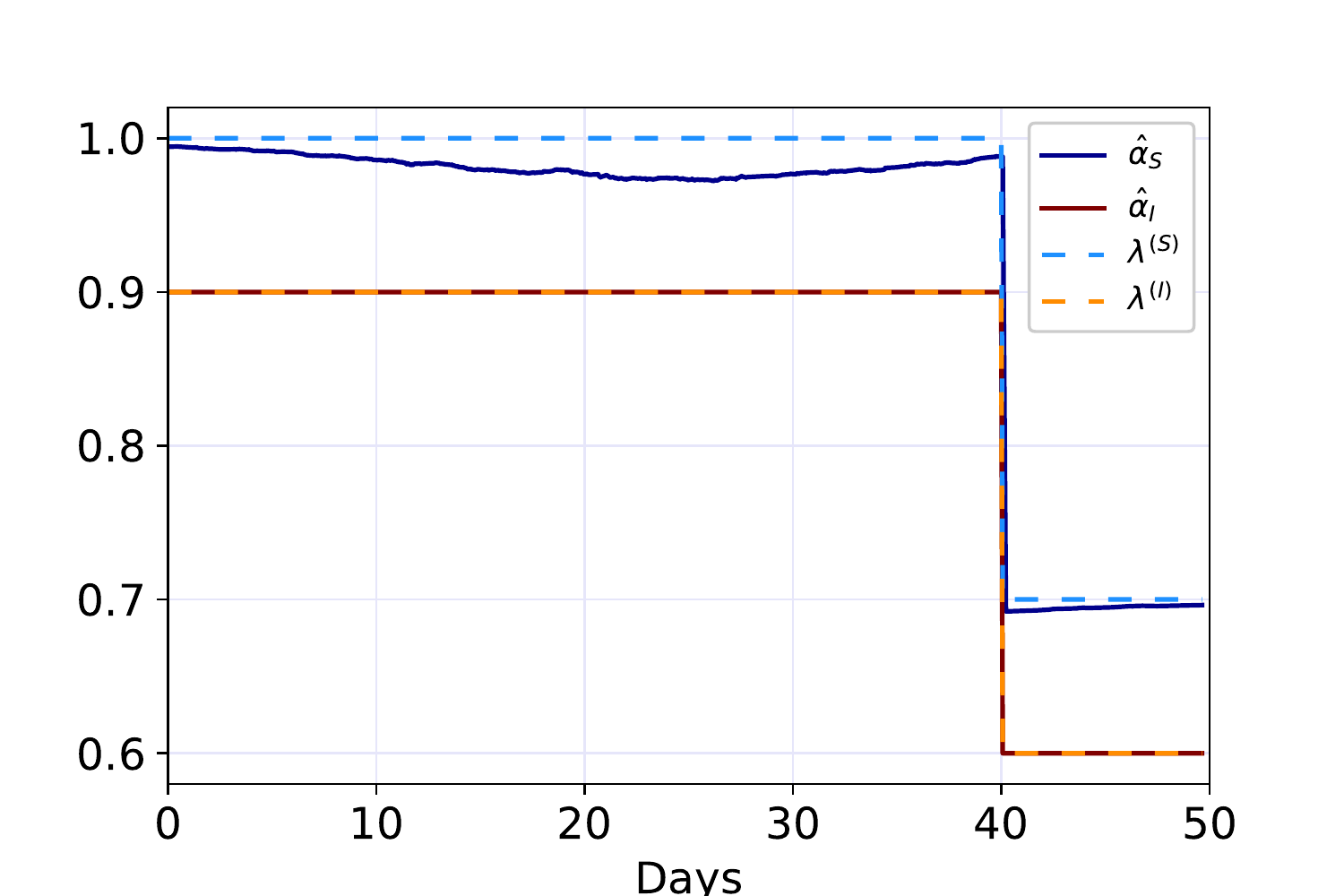}
\end{subfigure}
\hfill
\begin{subfigure}{.32\textwidth}
    \includegraphics[width=5.5cm,height=3.5cm]{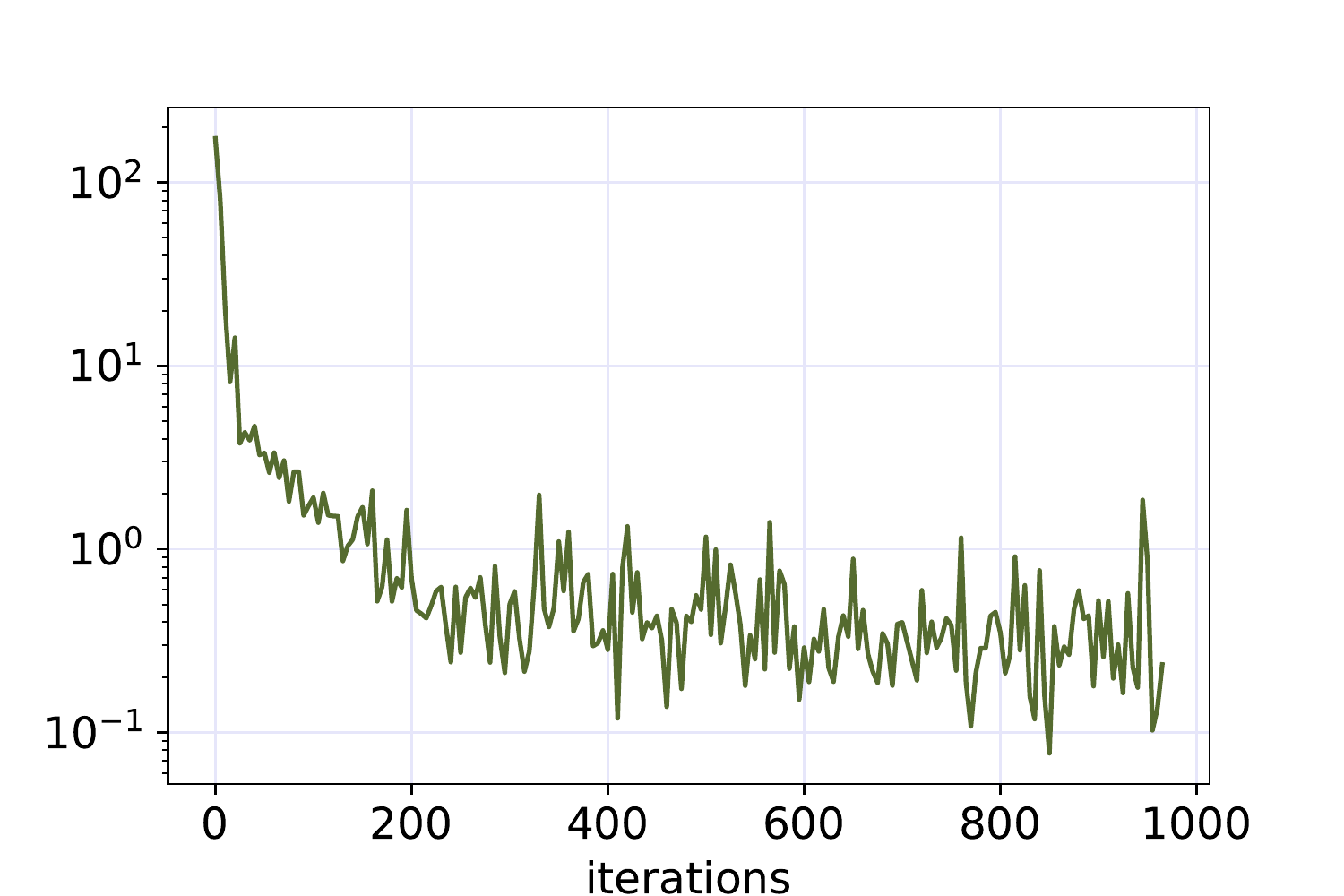}
\end{subfigure}
\caption{Late lockdown with Algorithm~\ref{algo:SGD-SMFG}. Evolution of the population state distribution (left), evolution of the controls (middle), convergence of the loss value (right).}
\label{fig:LL-NN}
\end{figure}

\begin{figure}[H]
\centering
\begin{subfigure}{.32\textwidth}
    \includegraphics[width=5.1cm,height=3.3cm]{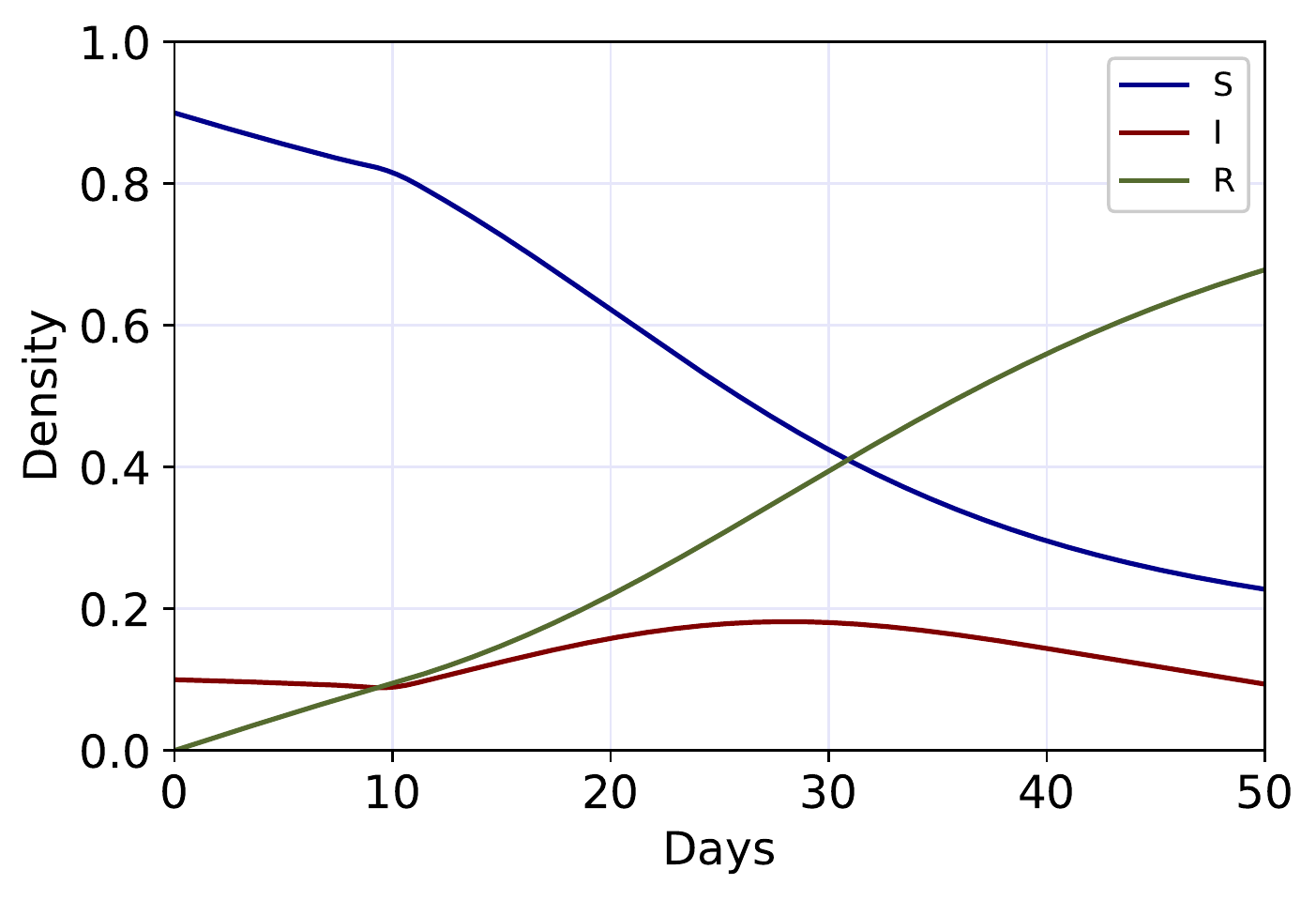}
\end{subfigure}
\hfill
\begin{subfigure}{.32\textwidth}
    \includegraphics[width=5.1cm,height=3.3cm]{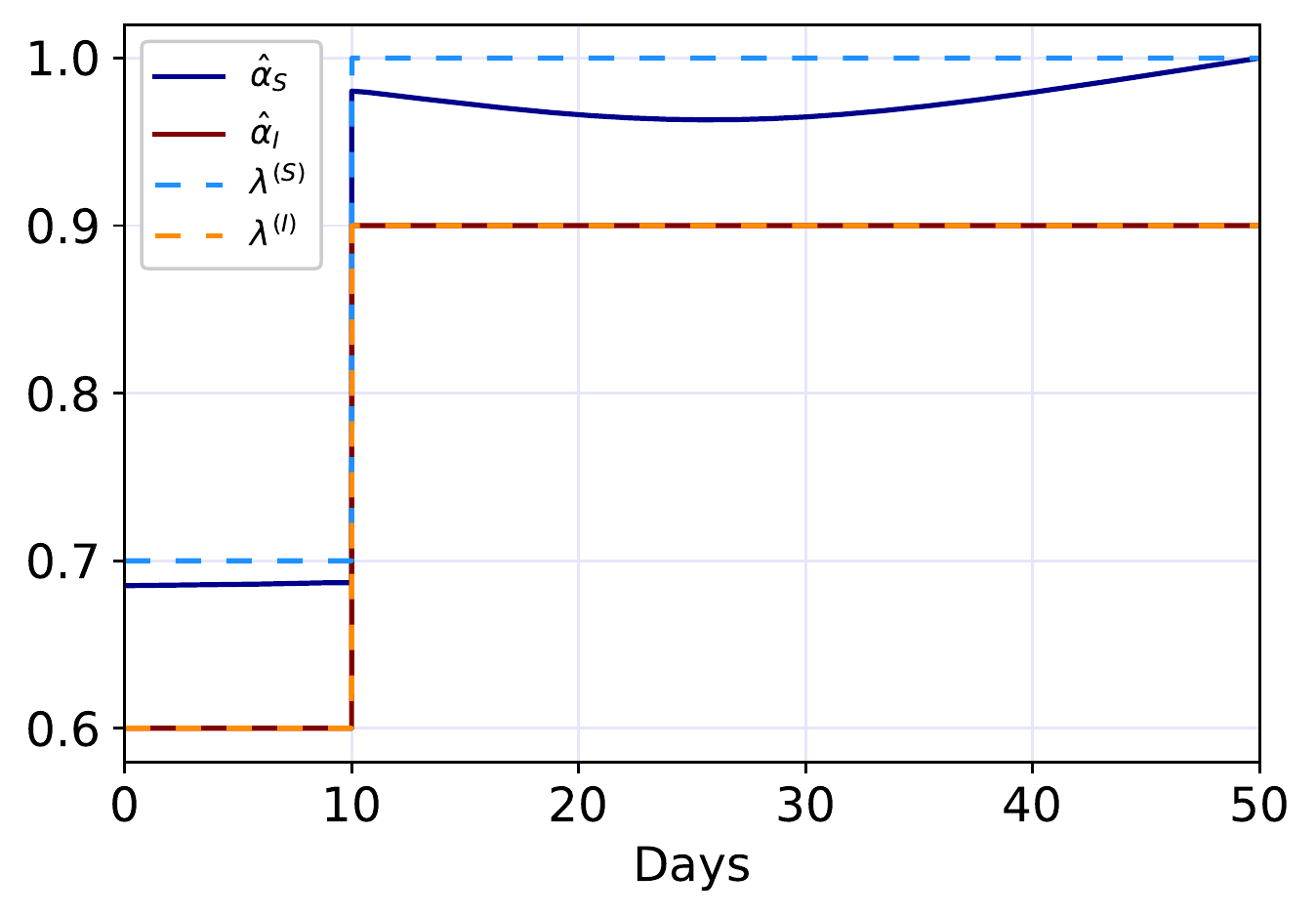}
\end{subfigure}
\hfill
\begin{subfigure}{.32\textwidth}
    \includegraphics[width=5.1cm,height=3.3cm]{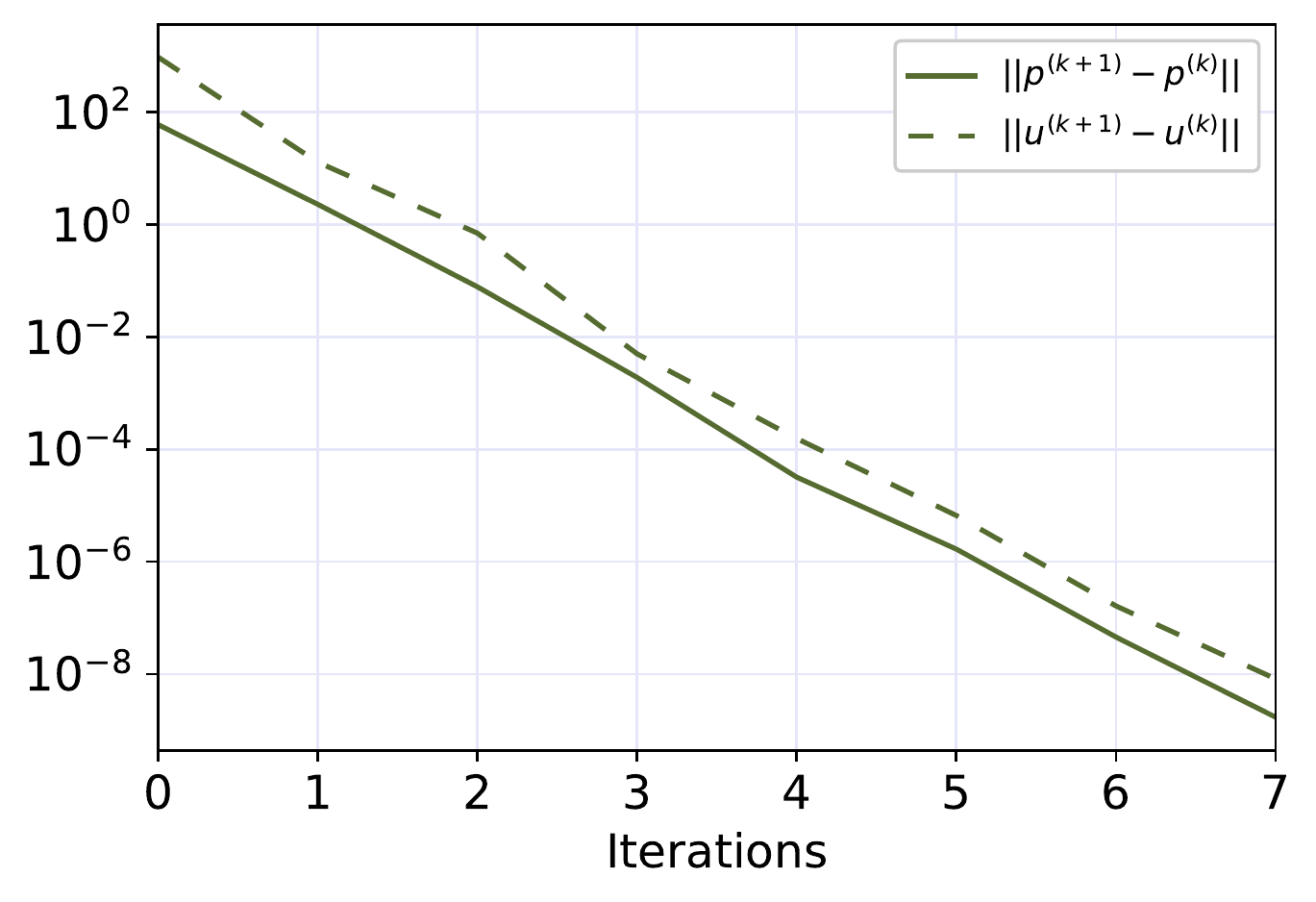}
\end{subfigure}
\caption{Early lockdown with the ODE solver. Evolution of the population state distribution (left), evolution of the controls (middle), convergence of the solver (right).}
\label{fig:EL-ode}
\end{figure}

\begin{figure}[H]
\centering
\begin{subfigure}{.32\textwidth}
    \includegraphics[width=5.5cm,height=3.5cm]{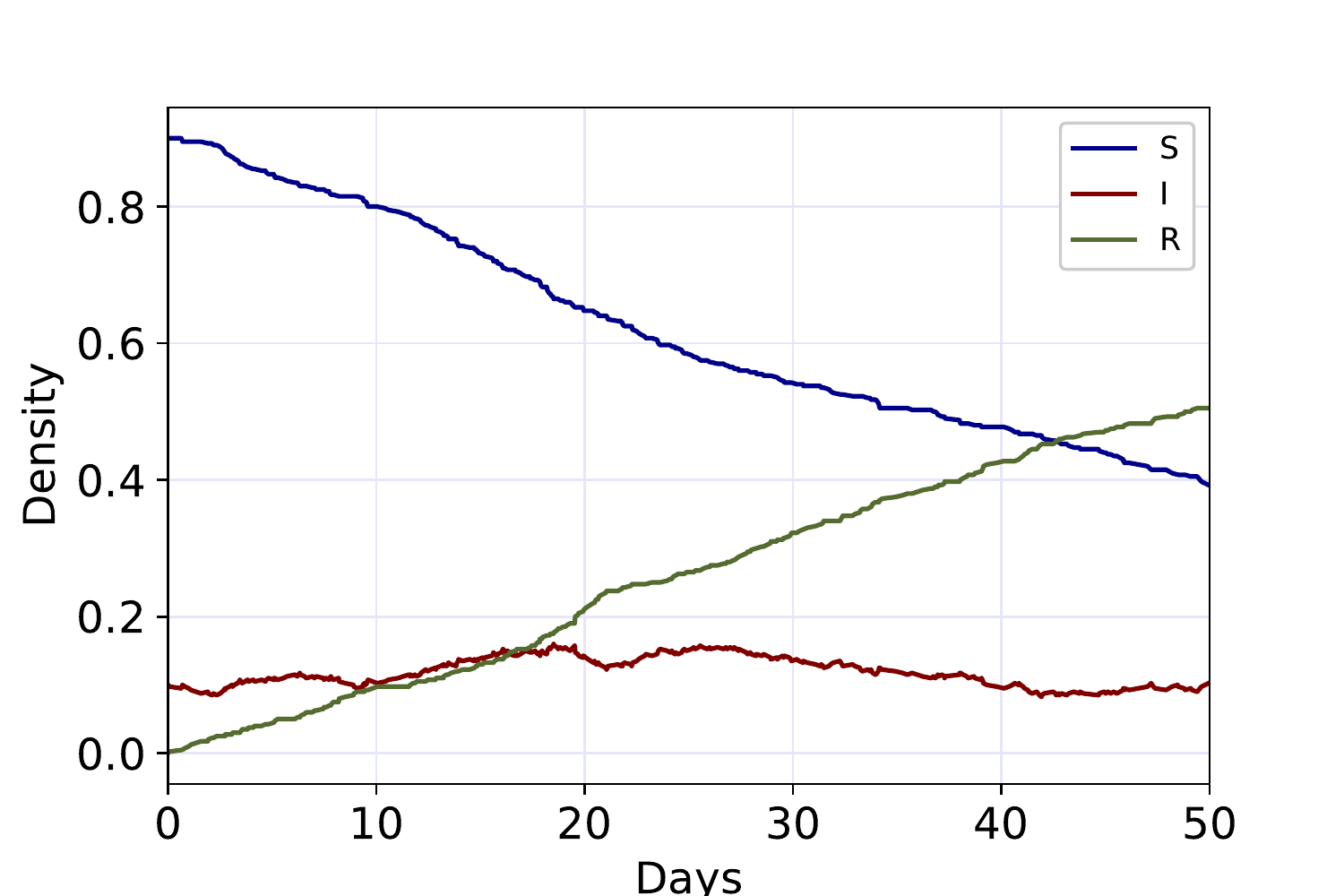}
\end{subfigure}
\hfill
\begin{subfigure}{.32\textwidth}
    \includegraphics[width=5.5cm,height=3.5cm]{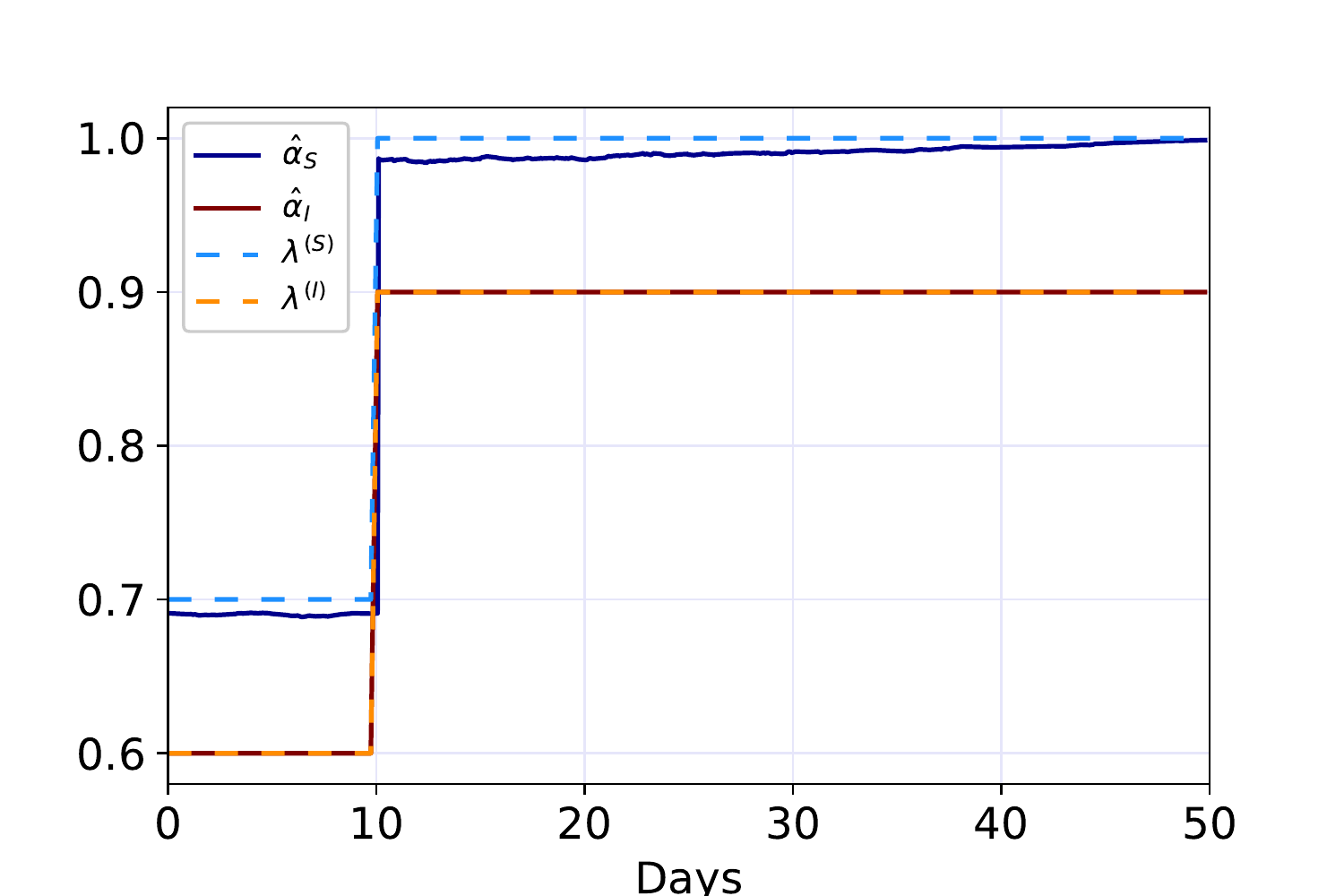}
\end{subfigure}
\hfill
\begin{subfigure}{.32\textwidth}
    \includegraphics[width=5.5cm,height=3.5cm]{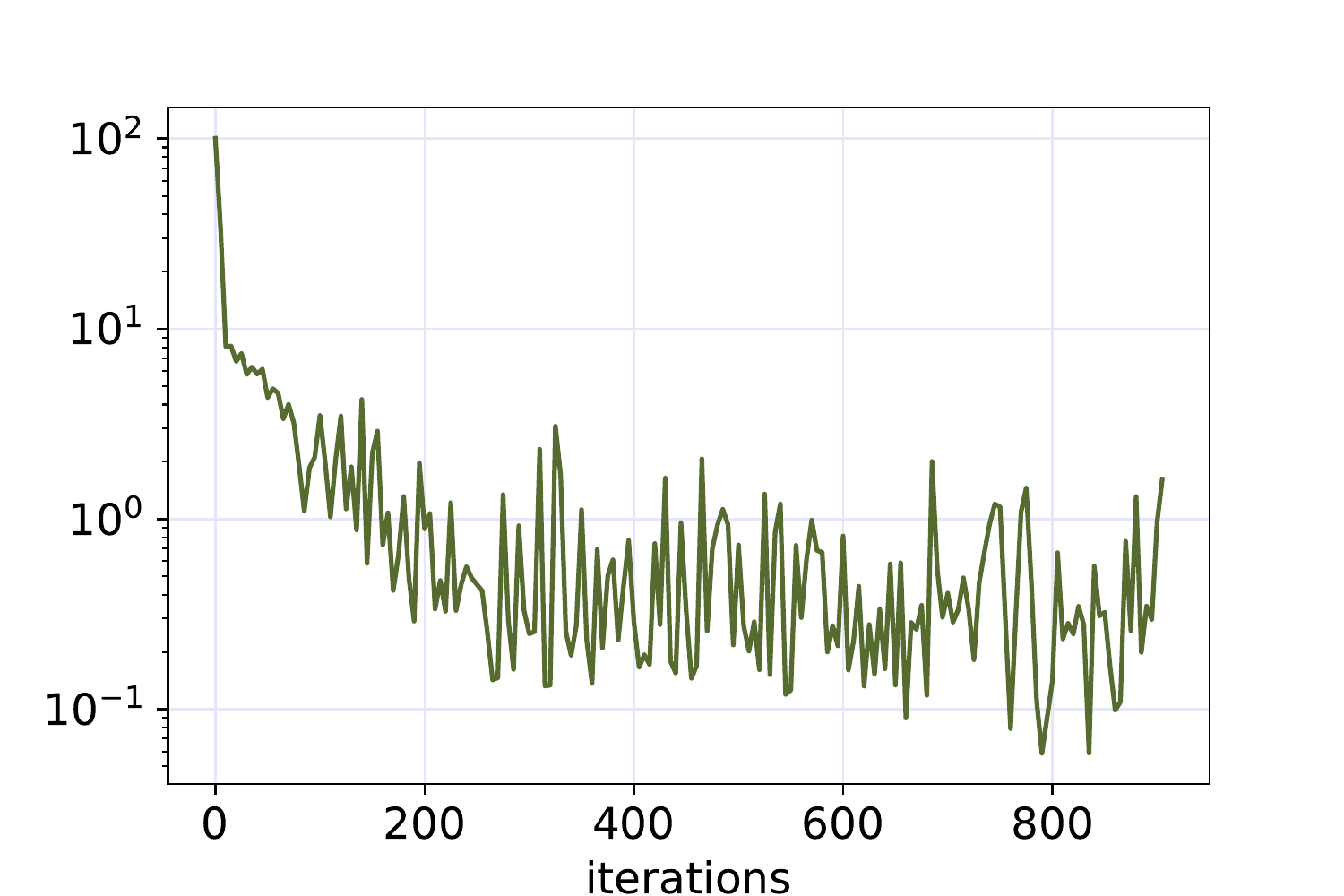}
\end{subfigure}
\caption{Early lockdown with Algorithm~\ref{algo:SGD-SMFG}. Evolution of the population state distribution (left), evolution of the controls (middle), convergence of the loss value (right).}
\label{fig:EL-NN}
\end{figure}

To illustrate the impact of the agents' optimization and the impact of the regulator's choice of policy, we consider four test cases, presented in Table~\ref{table:test1}. The parameters related to the dynamics are chosen according to the following assumptions on COVID-19 pandemic: Since the average recovery duration is around 10 days, the recovery rate is taken as $\gamma = 0.1$ 1/days; furthermore, to our up to date knowledge, the reinfection possibility is lower in the first 3 months and for later times it is uncertain.\footnotemark[\value{footnote}] 
\footnotetext{\url{https://www.cdc.gov/coronavirus/2019-ncov/hcp/duration-isolation.html}} Therefore, in some experiments in this paper, the reinfection rate $\eta$ is taken 0 or 0.01. Finally, since it is hard to estimate infection rate $\beta$ directly, we used the estimates on Basic Reproduction number $(R_0)$ of COVID-19. The CDC uses $R_0=2.5$ in the ``Current Best Estimate" scenario in its simulations.\footnote{\url{https://www.cdc.gov/coronavirus/2019-ncov/hcp/planning-scenarios.html}} Therefore, we use $\beta = R_0 \times \gamma = 0.25$ in our simulations. For the parameters related to the cost function of agents, we choose $c_I=1$ and $c_\lambda=10$ to balance the powers of the different cost terms. Since $c_I$ stands alone while $c_\lambda$ is multiplied with a term likely to be much more smaller than 1, we decide to use a higher $c_{\lambda}$ value. Further, we also rule out extreme cases. For example, if $c_I$ is taken to be too dominant then susceptible people will decide to minimize their contact factor, which leads to unrealistic results. Also, $c_I$ is not taken to be very small $(\approx 0)$, since that means sickness does not come with an added negative effect. We know from our experience during COVID-19 pandemic that being sick has both social and economic burden.

\begin{figure}[H]
\centering
\begin{subfigure}{.42\textwidth}
    \includegraphics[width=1\linewidth]{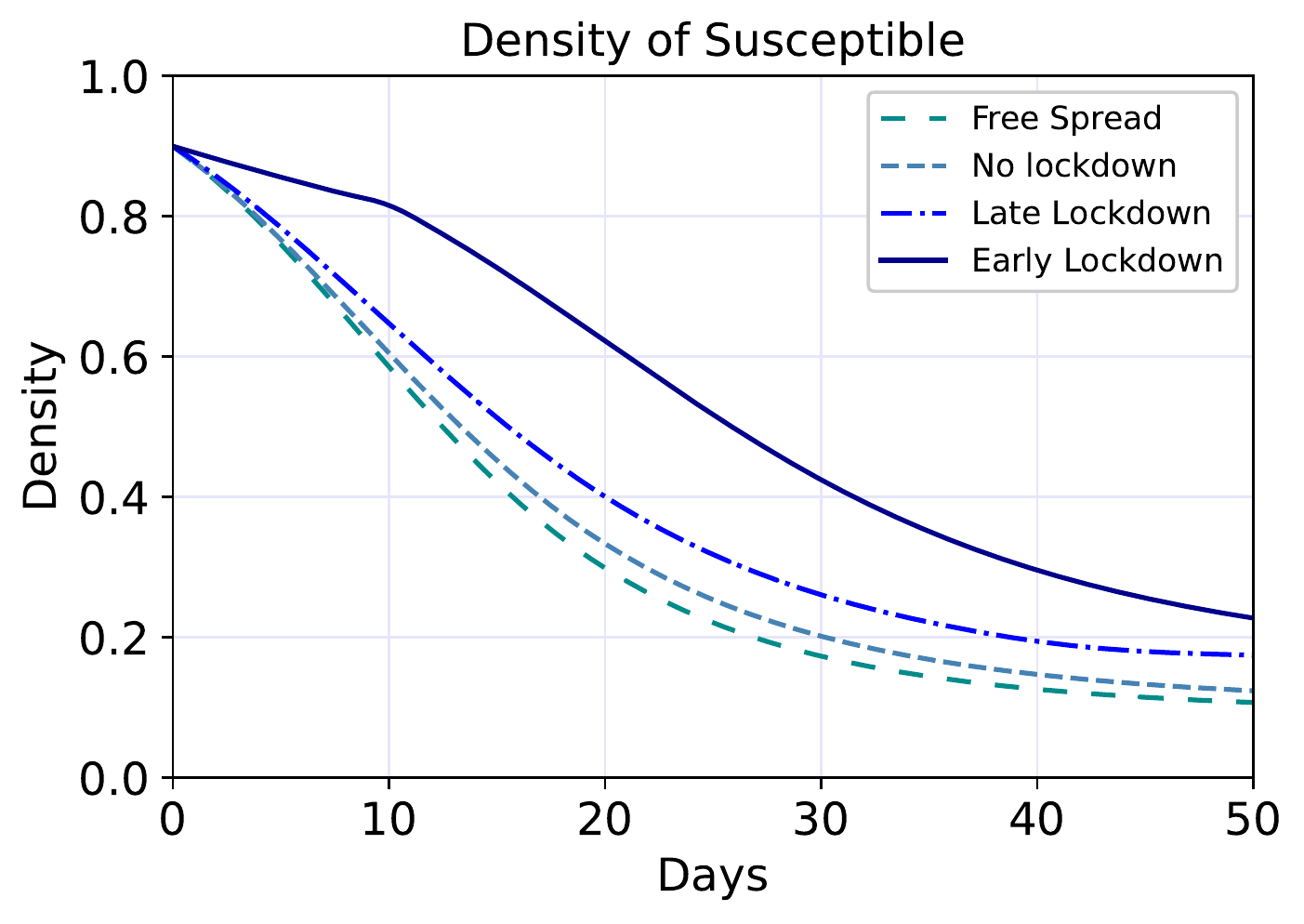}
\end{subfigure}
\begin{subfigure}{.42\textwidth}
    \includegraphics[width=1\linewidth]{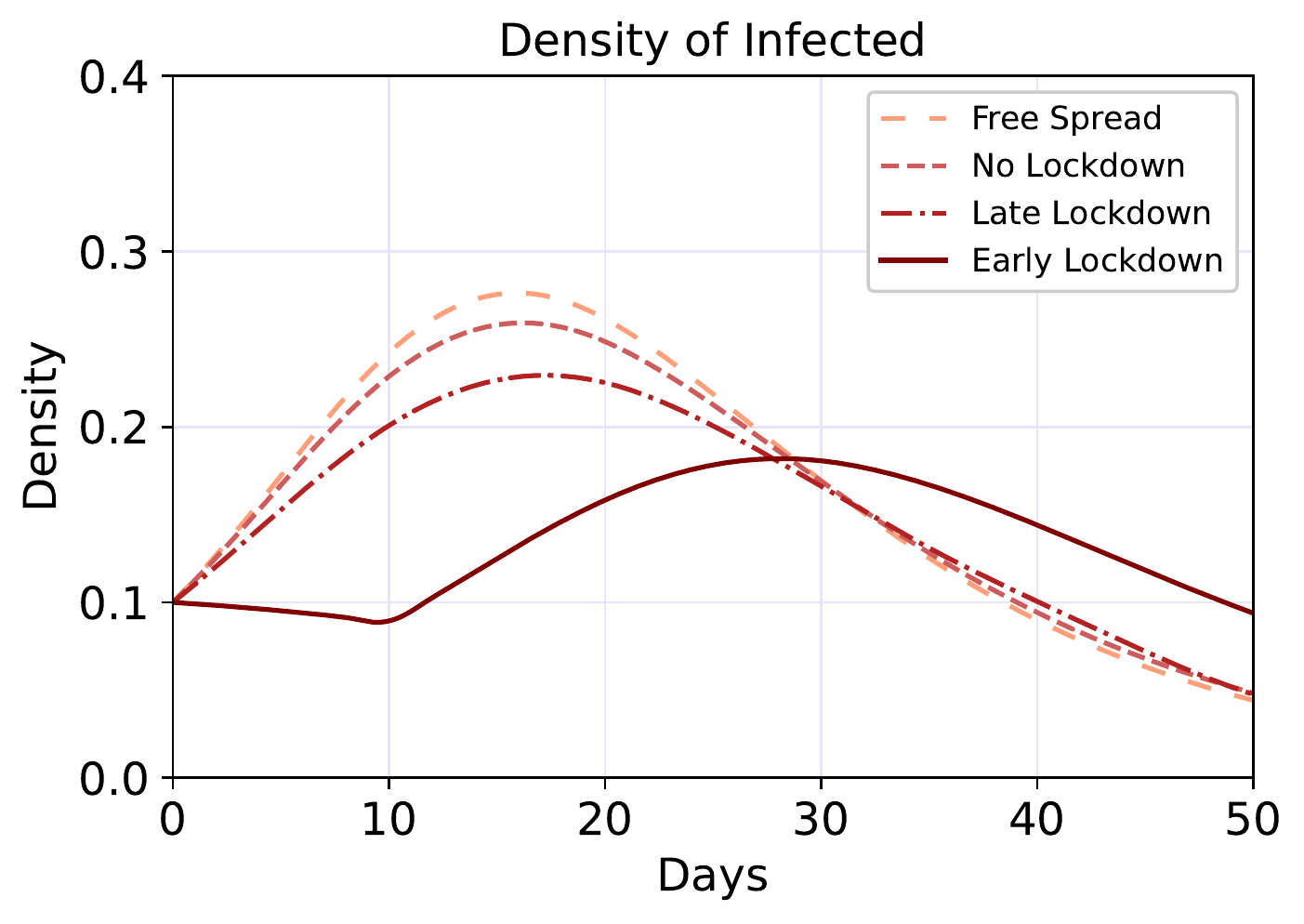}
\end{subfigure}

\begin{subfigure}{.42\textwidth}
    \includegraphics[width=1\linewidth]{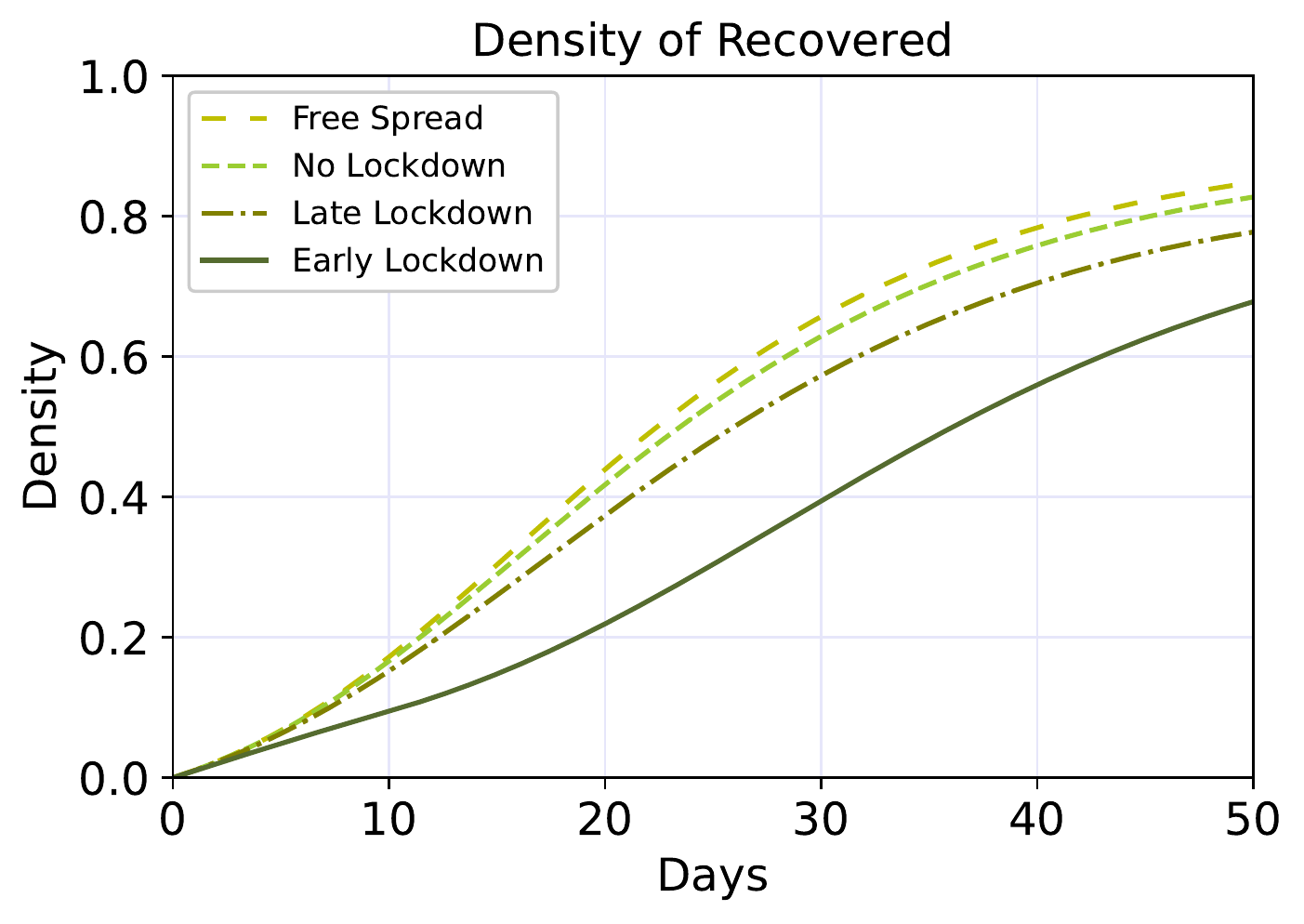}
\end{subfigure}
\begin{subfigure}{.42\textwidth}
    \includegraphics[width=1\linewidth]{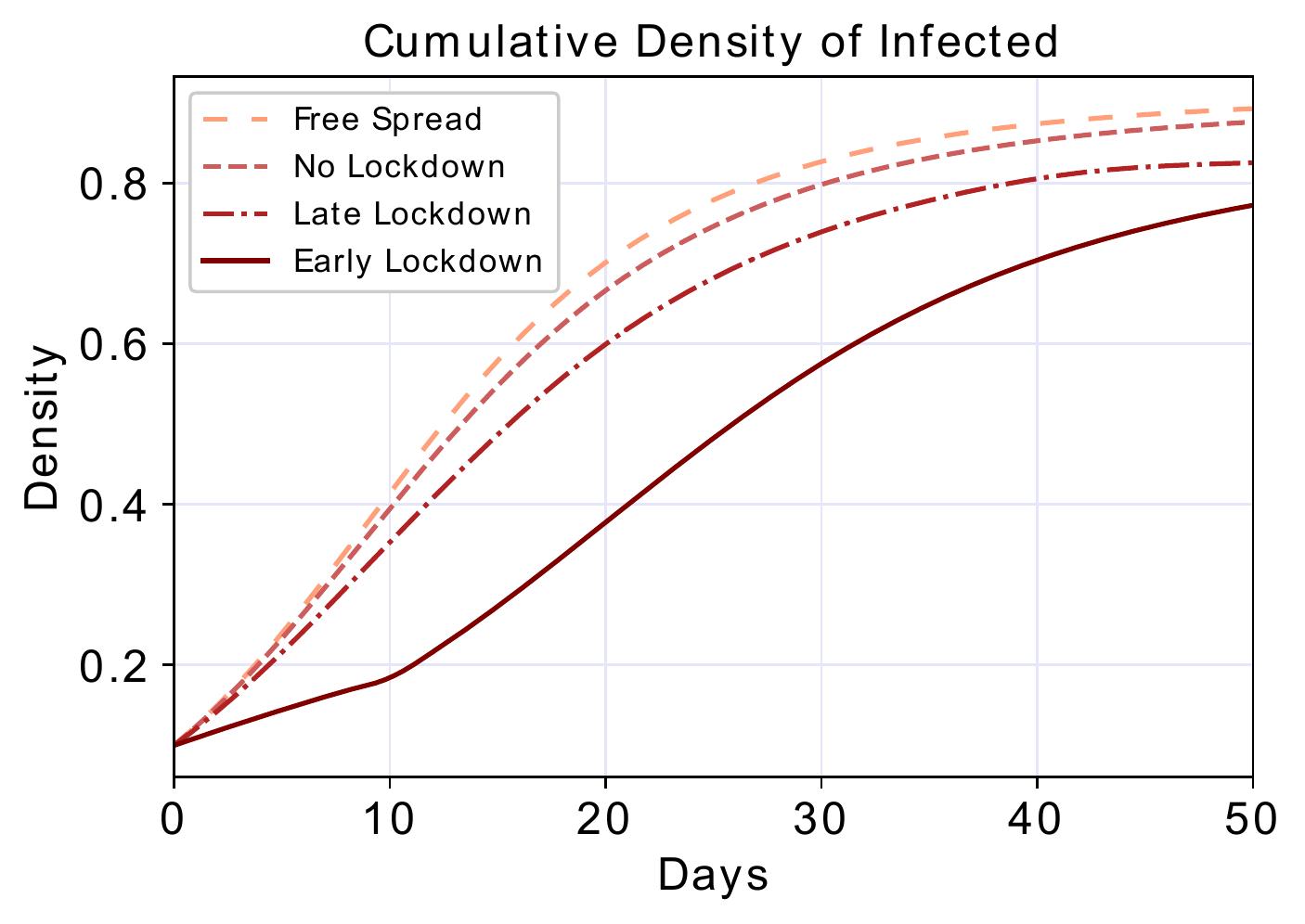}
\end{subfigure}
\caption{Evolution of the population state distribution in the four test cases, obtained using an ODE solver. 
\label{fig:SIR-comparison-evol-pop}}
\end{figure}

Fig.~\ref{fig:SIR-comparison-evol-pop} displays the evolution of the population's state distribution in each of the four test cases. In it, it is worth to note that the proportion of infected is decreasing from first test case to the last one, which can be interpreted in the following way: In the Nash equilibrium the agents take action to reduce the risk of being infected compared with letting the epidemic spread freely; imposing $\boldsymbol\lambda^{(I)} < 1$ encourages the players to be more cautious and hence decreases further infections; last, recommending a $\boldsymbol\lambda^{(I)}$ that is low in the beginning and then relaxing it (early lockdown case) helps avoiding the first peak (around $15$ days) but leads to another peak later (around $28$ days). We can further infer that the cumulative number of infected people can be decreased the most with the early lockdown policy. Therefore, we conclude that early actions are more effective at decreasing the severeness of the disease.

\subsection{A semi-explicitly solvable Stackelberg game}
\label{subsec:SIR-exp-reg}
We now add the regulator's optimization to the previous example, making it a Stackelberg game. The regulator pays a cost that is increasing with the number of infections and any deviation of the issued policy $\boldsymbol\lambda = (\lambda^{(S)}_t, \lambda^{(I)}_t, \lambda^{(R)}_t)_{t\in[0,T]}$ from some endogenously recommended levels, $\bar{\boldsymbol{\lambda}}$. There are multiple ways we can think of the latter: as levels recommended by the health authorities such as C.D.C.; an average of what other regulators are doing (\textit{e.g.}, other countries' regulations)\footnote{A scenario with multiple competing regulators is a highly interesting and relevant problem. During the past year, we have seen such opposition between US states, and EU member states. We imagine that in such a model we would sometimes see alignment of recommendations and other times specialization. However, the case is beyond the scope of this paper. }; or budgetary constraints. We will take on the viewpoint that $\boldsymbol{\bar\lambda}$ is a health authority recommendation to the regulating body that is ``the government".

Turning to the specifics, we set $C_0(p) = 0$ and we assume that the government minimizes the proportion of the infected people over time and tries to set socialization levels close to the levels recommended by the health authorities:
\begin{equation}
    c_0(t,p) = c_{\mathrm{Inf}} \, p(I)^2,\quad f_0(t,\lambda) = \sum_{i\in\{S,I,R\}}\frac{\bar{\beta}^{(i)}}{2}\left(\lambda^{(i)} - \bar{\lambda}^{(i)}\right)^2
\end{equation}
for constant $\bar{\lambda},\bar{\beta} \in \mathbb{R}^m_+$ and $c_{\mathrm{Inf}}>0$. The next proposition provides a semi-explicit construction of the optimal contract and the mean-field Nash equilibrium in this case. A proof is presented in appendix~\ref{app:SIR-derivation}.
\begin{proposition}
\label{prop:SIR-example}
Consider the Stackelberg game of this section. Let $\widetilde H : [0,T]\times \mathcal{P}(E) \times \mathbb{R}^m \times A \times \mathbb{R}_+^m \rightarrow \mathbb{R}$ be defined by:
\begin{equation}
\label{eq:Hamiltonian-tilde-W}
    \begin{aligned}
        \widetilde{H}(t,\pi,y,\widetilde\alpha,\lambda)
        &:=
        \left(y(I) - y(S)\right)\beta\lambda^{(I)}\alpha\pi(I)\pi(S)
        + 
        \left(y(R) - y(I)\right)\gamma 
        \\
        &\hspace{1cm}
        + \left(y(S)-y(R)\right)\eta 
        +
        c_0(t,\pi) + f_0(t,\lambda) 
        \\
        &\hspace{1cm}
        + \frac{c_\lambda}{2}\left(\lambda^{(S)}-\widetilde\alpha\right)^2\pi(S) + c_I\pi(I).
    \end{aligned}
\end{equation}
Let $(\hat{\alpha}(\pi_t,y_t),\hat{\lambda}^{(S)}(\pi_t,y_t), \hat{\lambda}^{(I)}(\pi_t,y_t), \hat{\lambda}^{(R)}(\pi_t,y_t))$ be the solution to 
\begin{equation}
\label{eq:example-optimal-controls}
    (\nabla_{\widetilde\alpha},\nabla_\lambda)\widetilde H(t,\pi_t, y_t, \hat{\alpha}(\pi_t,y_t), \hat{\lambda}(\pi_t,y_t)) = 0,
\end{equation}
assumed to exist uniquely and be admissible, 
and $(\boldsymbol \pi, \boldsymbol y)$ solves 
\begin{equation}
\label{eq:fbode}
\begin{aligned}
    \dot{\pi}_t 
    &= 
    \nabla_y \widetilde H(t,\pi_t,y_t,\hat\alpha(\pi_t,y_t),\hat\lambda(\pi_t,y_t),\quad \pi_0 = p_0,
    \\
    \dot{y}_t 
    &= 
    -\nabla_\pi\widetilde{H}(t, \pi_t, y_t, \hat{\alpha}(\pi_t,y_t),\hat\lambda(\pi_t,y_t)),\quad y_T = 0.
\end{aligned}
\end{equation}

Denote by $(\hat\pi,\hat y)$ the solution to \eqref{eq:fbode} and let us define the processes $\boldsymbol{\hat\alpha}\in\mathbb{A}$ and $\boldsymbol{\hat Z} \in \mathcal{H}^2_X$ by:
\begin{equation}
\begin{aligned}
    \hat{\alpha}_t &= \hat{\alpha}(\hat\pi_t,\hat y_t)\mathbbm{1}_S(X_{t-}) + \hat{\lambda}^{(I)}(\hat\pi_t,\hat y_t)\mathbbm{1}_I(X_{t-}) + \hat\lambda^{(R)}\mathbbm{1}_R(X_{t-})
    \\
    \hat{Z}_t &= \left(\frac{c_\lambda\left(\hat{\alpha}(\hat\pi_t,\hat y_t) - \hat\lambda^{(S)}(\hat\pi_t,\hat y_t)\right)}{\beta\hat\lambda^{(I)}(\hat\pi_t,\hat y_t) \hat \pi_t(I)}\mathbbm{1}_S(X_{t-})\mathbbm{1}(\lambda^{(I)}>0),0,0\right).
\end{aligned}
\end{equation}
Now let $y^0\in\mathbb{R}^m$ be such that $p_0^*y^0\leq \kappa$. We then define the random variable $\hat{\xi}$ almost surely by the Stieltjes integral
\begin{equation}
    \hat{\xi} := - X^*_0 y_0 + \int_0^T \left(f(t, X_{t-}, \hat{\alpha}_t, \hat \pi_t) + X^*_{t-}\bar{Q}(t,\hat{\alpha}_t,\hat\pi_t)\hat{Z}_t\right)dt - \int_0^T \hat{Z}^*_t dX_{t-}.
\end{equation}
Then $(\boldsymbol{\hat\lambda}, \hat{\xi})$ is an optimal contract. Moreover, under the optimal contract, every agent adopts the strategy where they pick the control $\hat{\alpha}_t$ and the flow of distribution of agents' states is $\boldsymbol{\hat\pi}$.
\end{proposition}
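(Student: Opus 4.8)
The plan is to collapse the Stackelberg problem onto a finite–dimensional deterministic optimal control problem whose Hamiltonian is exactly the function $\widetilde H$ of \eqref{eq:Hamiltonian-tilde-W}, and then to read the announced contract off the Pontryagin system \eqref{eq:fbode}. The first step is to compute the agents' best response. Given $(\boldsymbol\lambda,\xi)$, I minimize the reduced Hamiltonian $H_i$ state by state. Since the transition rates leaving $I$ and $R$ in \eqref{eq:SIR-Qmatrix-intro} do not depend on the control, $\alpha\mapsto H_I$ and $\alpha\mapsto H_R$ reduce to the quadratic running–cost terms of \eqref{eq:cost-agents-SIR1-intro}, so their minimizers are the \emph{deterministic} quantities $\lambda_t^{(I)}$ and $\lambda_t^{(R)}$; in state $S$, $H_S$ is, up to an $\alpha$–independent term, $\beta\alpha\big(\int_A a\,\rho(da,I)\big)(z(I)-z(S))+\tfrac{c_\lambda}{2}(\lambda_t^{(S)}-\alpha)^2$, strictly convex, so $\hat a_S(t,z,\rho)$ is the projection onto $A$ of $\lambda_t^{(S)}-\tfrac{\beta}{c_\lambda}\big(\int_A a\,\rho(da,I)\big)(z(S)-z(I))$, which gives Hypotheses~\ref{hyp:minimization-of-H-extended} and~\ref{hyp:alpha-lip-in-z}. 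The key remark is that, at any equilibrium, the conditional law of $\hat\alpha_t$ on $\{X_t=e_I\}$ is $\delta_{\lambda_t^{(I)}}$, hence $\int_A a\,\hat\rho_t(da,I)=\lambda_t^{(I)}\hat p_t(I)$ depends only on the state marginal; this verifies Hypothesis~\ref{hyp:reduction}(ii)--(iii), so by Proposition~\ref{prop:reduction} the mean-field Nash equilibrium coincides with the one of the non-extended game, whose marginal $\boldsymbol{\hat p}$ solves the SIR forward ODE $\dot{\hat p}_t=\hat p_t\,\bar Q(t,\hat\alpha_t,\hat p_t)$.

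Next I reduce the principal's side. By Proposition~\ref{prop:rewriting}, $V(\kappa)=\widetilde V(\kappa)$ with $\widetilde V$ as in \eqref{eq:tilde-V}. Since $U=\mathrm{id}$ and $C_0\equiv0$, I evaluate the forward $\boldsymbol Y$-equation of \eqref{eq:Y-Mbis} at $T$ and use that $\boldsymbol{\mathcal M}^{\boldsymbol Z,\boldsymbol\lambda,Y_0}$ is a $\mathbb Q^{\boldsymbol Z,\boldsymbol\lambda,Y_0}$-martingale and $\boldsymbol{\mathcal E}$ a $\mathbb P$-martingale to get $\mathbb E^{\mathbb Q}[-Y_T]=-\mathbb E[Y_0]+\mathbb E^{\mathbb Q}\big[\int_0^T f\,dt\big]$; the objective is affine decreasing in $\mathbb E[Y_0]$, so the constraint $\mathbb E[Y_0]\le\kappa$ saturates and contributes the constant $-\kappa$. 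Using that the marginal of $X_t$ under $\mathbb Q$ is $\hat p_t$ and $\hat\alpha_t$ is feedback in $X_{t-}$, together with the fact that the $I$- and $R$-running costs vanish at the equilibrium controls, one finds $\mathbb E^{\mathbb Q}[f(t,X_{t-},\hat\alpha_t,\hat\rho_t;\lambda_t)]=\tfrac{c_\lambda}{2}(\lambda_t^{(S)}-\hat\alpha_t^{(S)})^2\hat p_t(S)+c_I\hat p_t(I)$. Finally, because $\boldsymbol Z$ enters the reduced data only through $\hat\alpha_t^{(S)}=\hat a_S(t,Z_t,\hat\rho_t)$, which as $Z_t(S)-Z_t(I)$ varies ranges over all of $A$, minimizing over $\boldsymbol Z$ is the same as minimizing over $\widetilde\alpha\in\mathbb A$ (on the locus $\{\lambda_t^{(I)}\hat p_t(I)=0\}$ the choice is irrelevant since the associated rate vanishes). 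Collecting terms, $\widetilde V(\kappa)+\kappa$ equals the infimum over $(\widetilde\alpha,\boldsymbol\lambda)$ of $\int_0^T\big(c_0(t,\pi_t)+f_0(t,\lambda_t)+\tfrac{c_\lambda}{2}(\lambda_t^{(S)}-\widetilde\alpha_t)^2\pi_t(S)+c_I\pi_t(I)\big)dt$ subject to $\dot\pi_t=\pi_t\bar Q(t,\widetilde\alpha_t,\lambda_t,\pi_t)$, $\pi_0=p_0$ — a deterministic optimal control problem whose Hamiltonian is precisely $\widetilde H$ in \eqref{eq:Hamiltonian-tilde-W}.

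I then apply the Pontryagin principle to this problem: the adjoint $\boldsymbol y$ solves $\dot y_t=-\nabla_\pi\widetilde H$ with $y_T=0$ (no terminal cost), the control stationarity is $(\nabla_{\widetilde\alpha},\nabla_\lambda)\widetilde H=0$, i.e.\ \eqref{eq:example-optimal-controls}, and $\dot\pi_t=\nabla_y\widetilde H$ — together the forward-backward system \eqref{eq:fbode}. Under the standing assumption that \eqref{eq:example-optimal-controls}--\eqref{eq:fbode} admit a unique admissible solution $(\hat\pi,\hat y)$, this yields the optimal policy $\hat\lambda(\hat\pi_t,\hat y_t)$ and the optimal state-$S$ contact factor $\widetilde\alpha_t=\hat\alpha(\hat\pi_t,\hat y_t)$ (with $\hat\lambda_t^{(I)},\hat\lambda_t^{(R)}$ in $I,R$), which is the process $\boldsymbol{\hat\alpha}$ of the statement and, via $\boldsymbol{\hat p}=\hat\pi$, the announced equilibrium distribution flow. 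Inverting the relation $\hat a_S$ with the normalization $Z(I)=Z(R)=0$ (legitimate because only $Z(S)-Z(I)$ enters the reduced problem and $\|\cdot\|_{X_{t-}}$ ignores the other components) gives $\hat Z$. The terminal payment is recovered from the constraint $U(\hat\xi)=Y_T$ of \eqref{eq:MKV-basde}, i.e.\ $\hat\xi=Y_T$, after rewriting the stochastic integral $\int_0^T\hat Z_t^*\,d\mathcal M_t$ as a pathwise Stieltjes integral against $\boldsymbol X$ using the definition of $\boldsymbol{\mathcal M}$, and using $Y_0=X_0^*y^0$ with $p_0^*y^0\le\kappa$ (equality to attain the value). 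Admissibility of $(\hat\lambda,\hat\xi)$ (singleton Nash equilibrium, by Hypothesis~\ref{hyp:reduction}(i) and Proposition~\ref{prop:connection-Nash-BSDE}) and the walk-away constraint then hold because, by the construction underlying Proposition~\ref{prop:rewriting}, the agents' equilibrium cost equals $\mathbb E[Y_0]=p_0^*y^0\le\kappa$, so $(\hat\lambda,\hat\xi)$ is feasible and attains $V(\kappa)$.

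The main obstacle is the reduction of the second paragraph: rigorously justifying that optimizing the stochastic problem \eqref{eq:tilde-V} over $\boldsymbol Z$ is equivalent to a deterministic problem over the scalar feedback $\widetilde\alpha$ requires controlling the fixed-point definition of $\hat p_t$ (well-posedness of the equilibrium map, leaning on the assumed existence/uniqueness), the degenerate set $\{\lambda_t^{(I)}\hat p_t(I)=0\}$, and the harmless components $Z(I),Z(R)$. A secondary issue is that this SIR model does not literally satisfy the uniform positivity part of Hypothesis~\ref{hyp:q} (e.g.\ when $\eta=0$ or the contact factor vanishes), so one must either restrict to the regime where the general theory applies or argue by a short approximation; and since the reduced deterministic problem is not jointly convex, the Pontryagin conditions are only necessary, so the optimality claim genuinely relies on the assumed uniqueness of the solution to \eqref{eq:fbode} together with existence of a minimizer.
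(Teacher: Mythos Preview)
Your proposal is correct and takes essentially the same approach as the paper: reduce the extended MFG to a non-extended one via Proposition~\ref{prop:reduction}, use Proposition~\ref{prop:rewriting} together with $U=\mathrm{id}$ to rewrite the principal's problem, replace the optimization over $\boldsymbol Z$ by one over a deterministic scalar feedback $\widetilde\alpha$, and apply the Pontryagin principle to the resulting deterministic control problem with Hamiltonian $\widetilde H$. The two reductions you correctly flag as obstacles are precisely the points the paper isolates as intermediate lemmas --- the $\boldsymbol Z\leftrightarrow\boldsymbol\alpha$ equivalence is handled via a Banach fixed-point argument for a map $\Psi:\mathcal H^2_X\to\mathcal H^2_X$ (using a uniform lower bound on $p_t(I)$ to control the denominator), and the stochastic-to-deterministic reduction $W=\widetilde W$ is stated as a lemma relying on the analogous result in~\cite{Carmona2018Contract}.
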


The next numerical experiment is a comparison of Algorithm~\ref{algo:SGD-SMFG} and the semi-explicit solution of Proposition~\ref{prop:SIR-example}. The results are presented in Fig.~\ref{fig:stackelberg-exp-NN}--\ref{fig:stackelberg-exp-ODE} and the parameters used in the simulation are found in Table~\ref{tab:second-exp}. The additional parameters are chosen as follows: For $\bar \lambda$, we assumed that health authorities recommend a stricter policy for the infected people than susceptible and recovered people; therefore $\bar{\lambda}^{(I)}$ is equal to 0.7, while $\bar{\lambda}^{(S)}$ and $\bar{\lambda}^{(R)}$ are equal to 1. Further, for the government it is more important to follow the guidelines for infected people and if there is no reinfection as in this experiment here it is not important to follow the guidelines for the recovered people; therefore, $\bar{\beta}^{(i)}$ are taken $0.2$, $1$ and $0$, respectively for $i \in \{S, I, R\}$. The value of the loss in the numerical scheme of Algorithm~\ref{algo:SGD-SMFG} converges to the optimal value from the semi-explicit solution. Furthermore, the population dynamics are very similar in both solutions. As for the controls, we see that in both cases, the agents tend to follow closely the regulator's policy. The policies $\boldsymbol\lambda^{(S)}, \boldsymbol\lambda^{(I)}$ output by Algorithm~\ref{algo:SGD-SMFG} seem to capture the average value of the policies given by the semi-explicit solution. Since the loss value is very close to the optimal one, we deduce that these policies are approximately optimal.

\begin{table}[h!]
\caption{Parameter values for the SIR Stackelberg MFG experiment.}
\centering
\ra{1.4}
\begin{tabular}{@{}ccccccccccc@{}}
\toprule
 $T$ & $p^0$ & $c_\lambda$ & $c_I$ & $c_{\mathrm{Inf}}$ & $\bar\beta$ & $\bar\lambda$ &
 $\beta$ & $\gamma$ & $\eta$ & $\kappa$
\\
\midrule
$30$ & $(0.9,0.1,0)$ & $10$ & $0.5$ & $1$ & $(0.2,1,0)$ & $(1,0.7,0)$ & $0.25$ & $0.1$ & $0$ & $0$
\\
\bottomrule
\end{tabular}
\label{tab:second-exp}
\end{table}

\begin{figure}[H]
\centering
\begin{subfigure}{.32\textwidth}
    \includegraphics[width=1\linewidth]{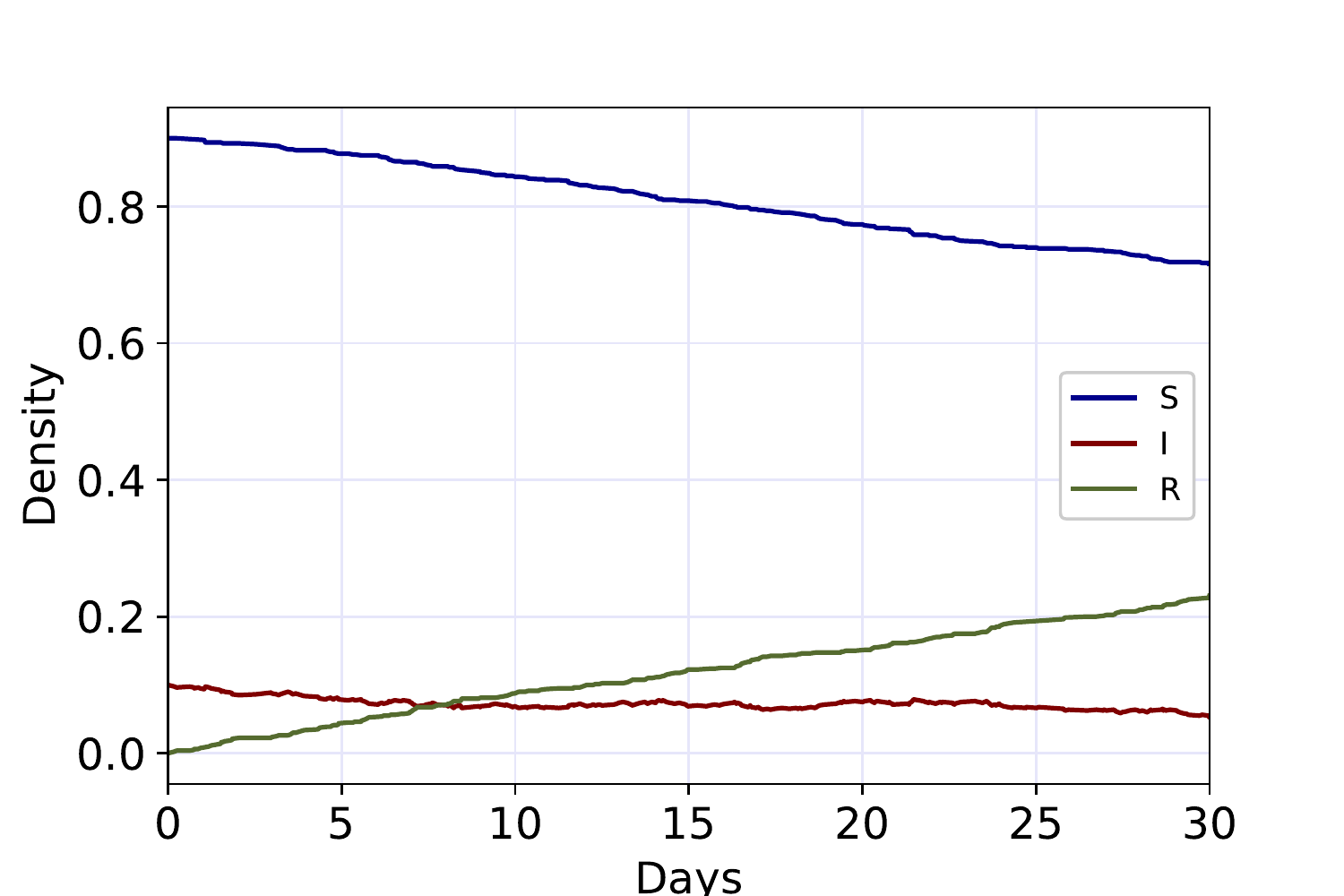}
\end{subfigure}
\hfill
\begin{subfigure}{.32\textwidth}
    \includegraphics[width=1\linewidth]{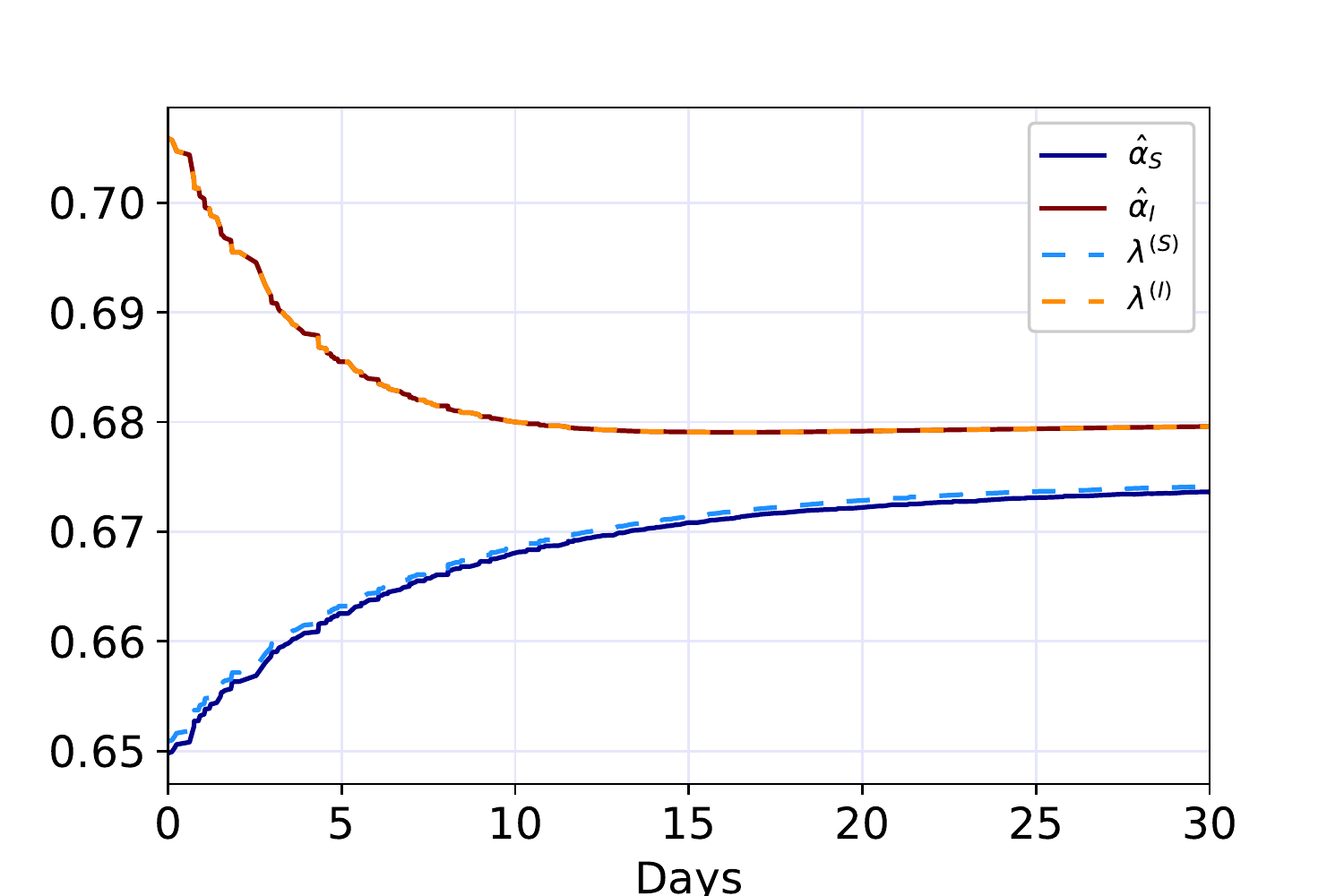}
\end{subfigure}
\hfill
\begin{subfigure}{.32\textwidth}
    \includegraphics[width=1\linewidth]{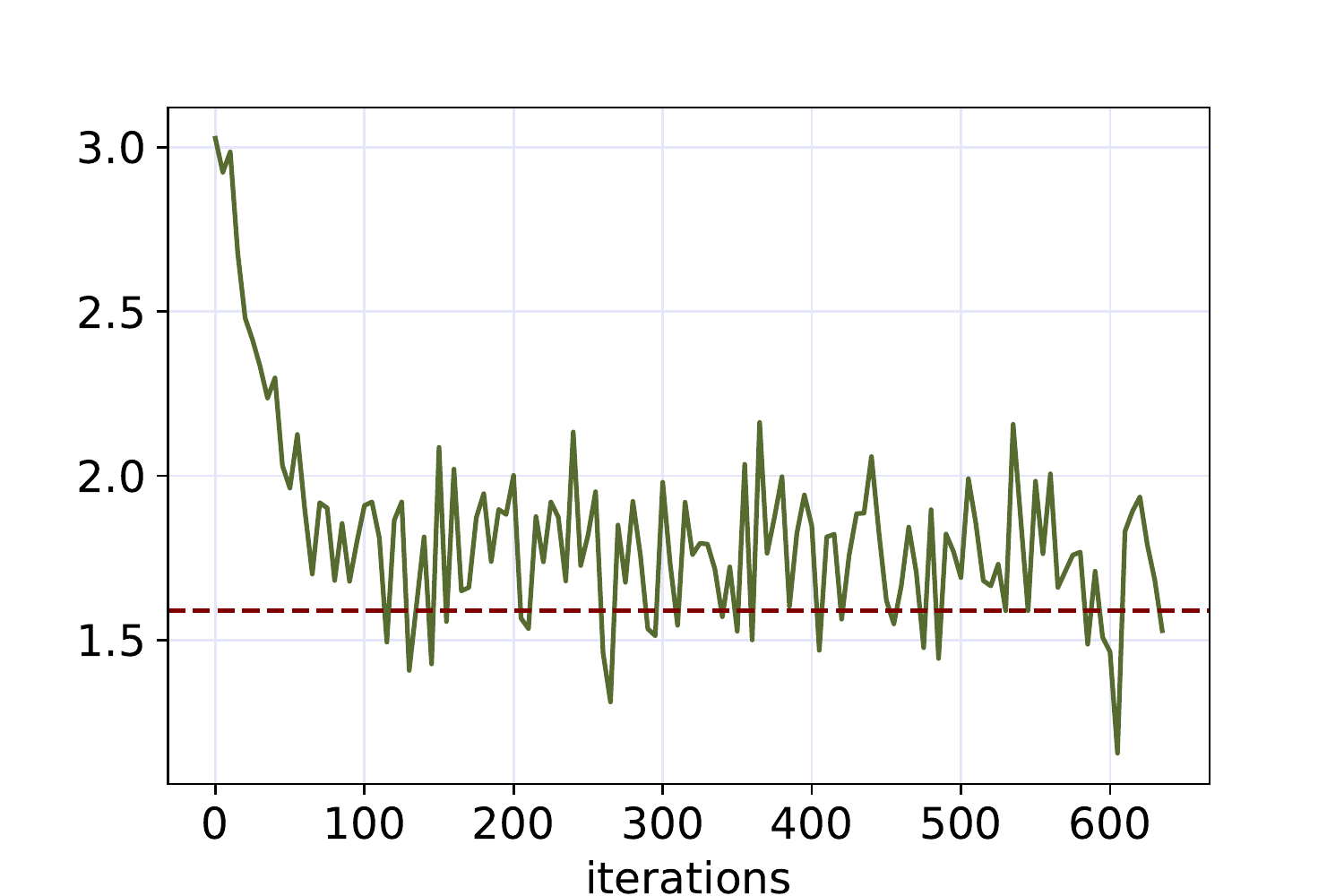}
\end{subfigure}
\caption{SIR Stackelberg mean field game with Algorithm~\ref{algo:SGD-SMFG}. Evolution of the population state distribution (left), evolution of the controls (middle), convergence of the loss value (right). Here, green line refers to the loss function found by the neural network based approach and the red line shows the optimal loss value.}
\label{fig:stackelberg-exp-NN}
\end{figure}

\begin{figure}[H]
\centering
\begin{subfigure}{.32\textwidth}
    \includegraphics[width=1\linewidth]{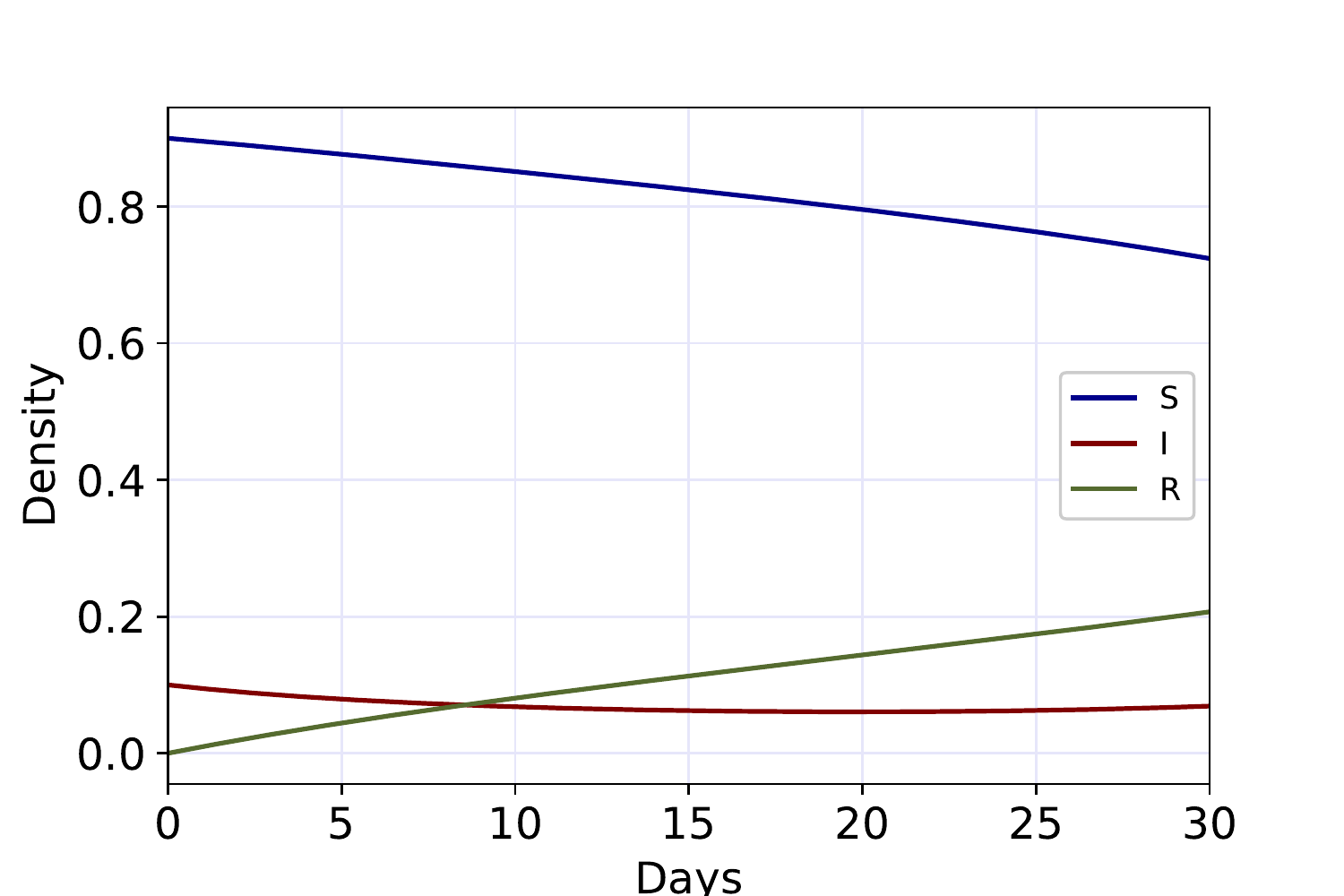}
\end{subfigure}
\hfill
\begin{subfigure}{.32\textwidth}
    \includegraphics[width=1\linewidth]{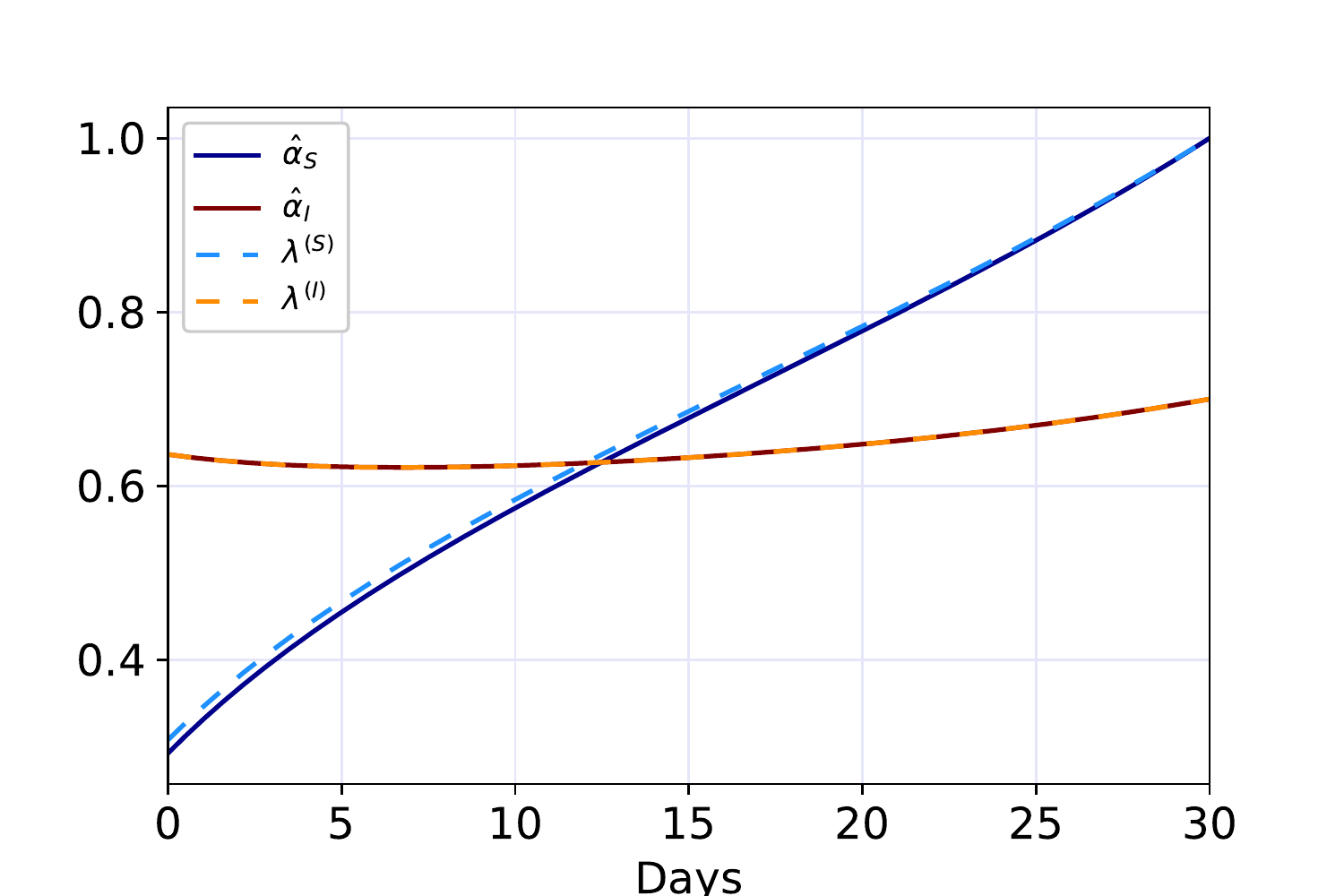}
\end{subfigure}
\hfill
\begin{subfigure}{.32\textwidth}
    \includegraphics[width=1\linewidth]{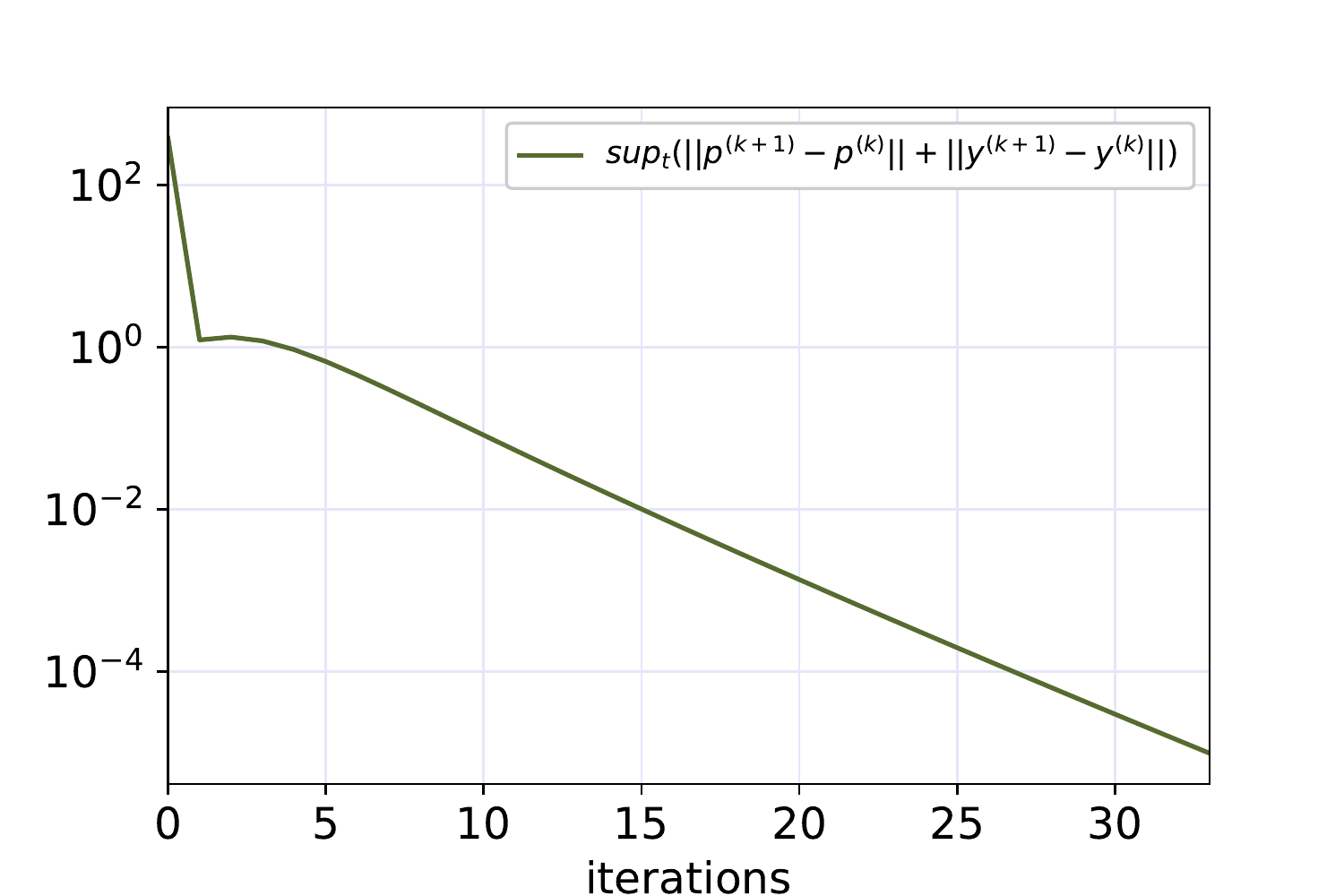}
\end{subfigure}
\caption{SIR Stackelberg mean field game with ODE solver. Evolution of the population state distribution (left), evolution of the controls (middle), convergence of the solver (right). }
\label{fig:stackelberg-exp-ODE}
\end{figure}

\subsection{A more complex model: SEIRD}
\label{sec:num-SEIRD}

To illustrate the flexibility and scalibility of the proposed numerical method, we now consider a more complex model. On top of the $S$, $I$ and $R$ states considered above, we add two new states: Exposed ($E$) and Dead ($D$). An individual is in state $E$ when it has been infected but is not yet infectious. Hence the agents evolve from $S$ to $E$ and then $I$, and the infection rate from $S$ to $E$ depends on the proportion of the infected people. From the point of view of the dynamics, the state $D$ is absorbing but it is important for the cost functions discussed below. Now $R$ is interpreted as recovered. We consider the states in the order: $S, E, I, R, D$. A representative agent evolves according to the rate matrix $Q(t,\alpha_t,\rho_t)$,
\begin{equation}
\label{eq:SEIRD-Qmatrix}
    Q(t,\alpha,\rho)
    =
    \begin{bmatrix}
    \cdots & \beta\alpha_t\int_A a\rho_t(da,I) & 0 & 0 & 0 %
    \\
    0 & \cdots & \epsilon & 0 & 0 
    \\
    0 & 0 & \cdots & \gamma & \delta 
    \\
    \eta  & 0 & 0 & \cdots & 0 
    \\
    0  & 0 & 0 & 0 & \cdots 
    \end{bmatrix},
\end{equation}
where $\beta,\gamma,\eta,\Lambda,\delta  \in \mathbb{R}_+$ are constants. See Fig.~\ref{fig:SEIRD-diagram} for a diagram of the dynamics.

The cost of the agents generalizes the previous example~\eqref{eq:cost-agents-SIR1-intro} by incorporating terms related to the new states as follows:

\begin{equation}
\label{eq:running-cost-in-SEIRD}
\begin{aligned}
    f(t,x,\alpha,\rho;&\lambda) 
    = 
    \frac{c_\lambda}{2}\left(\lambda^{(S)} - \alpha\right)^2\mathbbm{1}_{S}(x) + \frac{1}{2}\left(\lambda^{(E)} - \alpha\right)^2\mathbbm{1}_{E}(x) +\\ &\left(\frac{1}{2}\left(\lambda^{(I)} - \alpha\right)^2 + c_I\right)\mathbbm{1}_{I}(x)
     + \frac{1}{2}\left(\lambda^{(R)}-\alpha\right)^2\mathbbm{1}_{R}(x) +c_{D}\mathbbm{1}_{D}(x),
\end{aligned}
\end{equation}
where $c_\lambda,c_I, c_{D} \in\mathbb{R}_+$ are constants. We note that the final term in \eqref{eq:running-cost-in-SEIRD} represents a cost of passing due to the disease (transitioning to the absorbing state $D$) and a preference for doing so as late as possible. The terminal payment utility is $U(\xi)=\xi$. Further, we also modify the cost of the regulator:

\begin{equation}
    c_0(t,p) = c_{inf}p(I)^2,\quad f_0(t,\lambda) = \sum_{i\in\{S,E,I,R\}}\frac{\bar{\beta}^{(i)}}{2}\left(\lambda^{(i)} - \bar{\lambda}^{(i)}\right)^2, \quad C_0(p) = c_{d} p(D),
\end{equation}
for constant $\bar{\lambda},\bar{\beta} \in \mathbb{R}^m_+$ and $c_{\inf}, c_{d}>0$. Compared to previous experiments, the regulator is now paying an additional terminal cost $C_0(p)$ depending on the proportion of deceased people at the end of the time horizon.
We set the coefficients $c_d$ and $c_D$ to high values to put more importance to the cost terms related to death. Further we assumed that mortality rate $\delta$ is 1\%.

As it can be seen in the top plots in Figure \ref{fig:SEIRD}, playing the mean field Nash Equilibrium under the control of regulator flattens the curve of infections compared to free spread. We also see that when susceptible players do not feel safe enough with the recommended socialization levels they use a lower contact rate. This showcases the ability of the model to capture a population that is more risk-averse than their regulator. On a final note, this experiment shows that agents may not follow exactly what regulator proposes; therefore, the regulator should not assume during policy optimization that the population will strictly obey the announced policy.

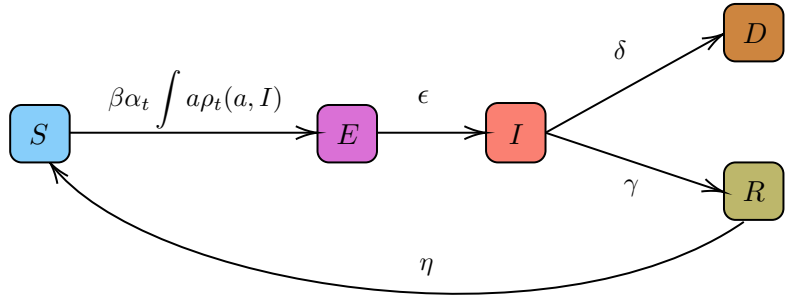
\begin{figure}
\begin{center}

\tikzset{every picture/.style={line width=0.75pt}} 

\begin{tikzpicture}[x=0.75pt,y=0.75pt,yscale=-1,xscale=1]

\draw  [fill={rgb, 255:red, 218; green, 112; blue, 214 }  ,fill opacity=1 ] (155,126.8) .. controls (155,123.6) and (157.6,121) .. (160.8,121) -- (179.2,121) .. controls (182.4,121) and (185,123.6) .. (185,126.8) -- (185,144.2) .. controls (185,147.4) and (182.4,150) .. (179.2,150) -- (160.8,150) .. controls (157.6,150) and (155,147.4) .. (155,144.2) -- cycle ;
\draw  [fill={rgb, 255:red, 250; green, 128; blue, 114 }  ,fill opacity=1 ] (240,126) .. controls (240,122.69) and (242.69,120) .. (246,120) -- (264,120) .. controls (267.31,120) and (270,122.69) .. (270,126) -- (270,144) .. controls (270,147.31) and (267.31,150) .. (264,150) -- (246,150) .. controls (242.69,150) and (240,147.31) .. (240,144) -- cycle ;
\draw  [fill={rgb, 255:red, 189; green, 183; blue, 107 }  ,fill opacity=1 ] (360,155.8) .. controls (360,152.6) and (362.6,150) .. (365.8,150) -- (384.2,150) .. controls (387.4,150) and (390,152.6) .. (390,155.8) -- (390,173.2) .. controls (390,176.4) and (387.4,179) .. (384.2,179) -- (365.8,179) .. controls (362.6,179) and (360,176.4) .. (360,173.2) -- cycle ;
\draw    (185,135) -- (238,135) ;
\draw [shift={(240,135)}, rotate = 180] [color={rgb, 255:red, 0; green, 0; blue, 0 }  ][line width=0.75]    (10.93,-3.29) .. controls (6.95,-1.4) and (3.31,-0.3) .. (0,0) .. controls (3.31,0.3) and (6.95,1.4) .. (10.93,3.29)   ;
\draw    (270,135) -- (358.25,85.97) ;
\draw [shift={(360,85)}, rotate = 510.95] [color={rgb, 255:red, 0; green, 0; blue, 0 }  ][line width=0.75]    (10.93,-3.29) .. controls (6.95,-1.4) and (3.31,-0.3) .. (0,0) .. controls (3.31,0.3) and (6.95,1.4) .. (10.93,3.29)   ;
\draw    (370,180) .. controls (273.98,246.67) and (67.58,212.35) .. (20.69,150.93) ;
\draw [shift={(20,150)}, rotate = 413.73] [color={rgb, 255:red, 0; green, 0; blue, 0 }  ][line width=0.75]    (10.93,-3.29) .. controls (6.95,-1.4) and (3.31,-0.3) .. (0,0) .. controls (3.31,0.3) and (6.95,1.4) .. (10.93,3.29)   ;
\draw  [fill={rgb, 255:red, 205; green, 133; blue, 63 }  ,fill opacity=1 ] (360,75.8) .. controls (360,72.6) and (362.6,70) .. (365.8,70) -- (384.2,70) .. controls (387.4,70) and (390,72.6) .. (390,75.8) -- (390,93.2) .. controls (390,96.4) and (387.4,99) .. (384.2,99) -- (365.8,99) .. controls (362.6,99) and (360,96.4) .. (360,93.2) -- cycle ;
\draw    (270,135) -- (358.1,164.37) ;
\draw [shift={(360,165)}, rotate = 198.43] [color={rgb, 255:red, 0; green, 0; blue, 0 }  ][line width=0.75]    (10.93,-3.29) .. controls (6.95,-1.4) and (3.31,-0.3) .. (0,0) .. controls (3.31,0.3) and (6.95,1.4) .. (10.93,3.29)   ;
\draw  [fill={rgb, 255:red, 135; green, 206; blue, 250 }  ,fill opacity=1 ] (0,126.8) .. controls (0,123.6) and (2.6,121) .. (5.8,121) -- (24.2,121) .. controls (27.4,121) and (30,123.6) .. (30,126.8) -- (30,144.2) .. controls (30,147.4) and (27.4,150) .. (24.2,150) -- (5.8,150) .. controls (2.6,150) and (0,147.4) .. (0,144.2) -- cycle ;
\draw    (30,135) -- (153,135) ;
\draw [shift={(155,135)}, rotate = 180] [color={rgb, 255:red, 0; green, 0; blue, 0 }  ][line width=0.75]    (10.93,-3.29) .. controls (6.95,-1.4) and (3.31,-0.3) .. (0,0) .. controls (3.31,0.3) and (6.95,1.4) .. (10.93,3.29)   ;

\draw (163,129) node [anchor=north west][inner sep=0.75pt]   [align=left] {$\displaystyle E$};
\draw (250,129) node [anchor=north west][inner sep=0.75pt]   [align=left] {$\displaystyle I$};
\draw (368,158) node [anchor=north west][inner sep=0.75pt]   [align=left] {$\displaystyle R$};
\draw (48,102) node [anchor=north west][inner sep=0.75pt]  [font=\small] [align=left] {$\displaystyle \beta \alpha _{t}\int a\rho _{t}( a,I)$};
\draw (308,157) node [anchor=north west][inner sep=0.75pt]  [font=\small] [align=left] {$\displaystyle \gamma $};
\draw (205,197) node [anchor=north west][inner sep=0.75pt]  [font=\small] [align=left] {$\displaystyle \eta$};
\draw (368,78) node [anchor=north west][inner sep=0.75pt]   [align=left] {$\displaystyle D$};
\draw (8,129) node [anchor=north west][inner sep=0.75pt]   [align=left] {$\displaystyle S$};
\draw (303,87) node [anchor=north west][inner sep=0.75pt]  [font=\small] [align=left] {$\displaystyle \delta $};
\draw (204,112) node [anchor=north west][inner sep=0.75pt]   [align=left] {$\displaystyle \epsilon $};

\end{tikzpicture}

\end{center}
\caption{SEIRD model corresponding to the $Q$-matrix~\eqref{eq:SEIRD-Qmatrix}.}
\label{fig:SEIRD-diagram}
\end{figure}

\begin{figure}[h]
\centering
\begin{subfigure}{.48\textwidth}
    \includegraphics[width=0.85\linewidth]{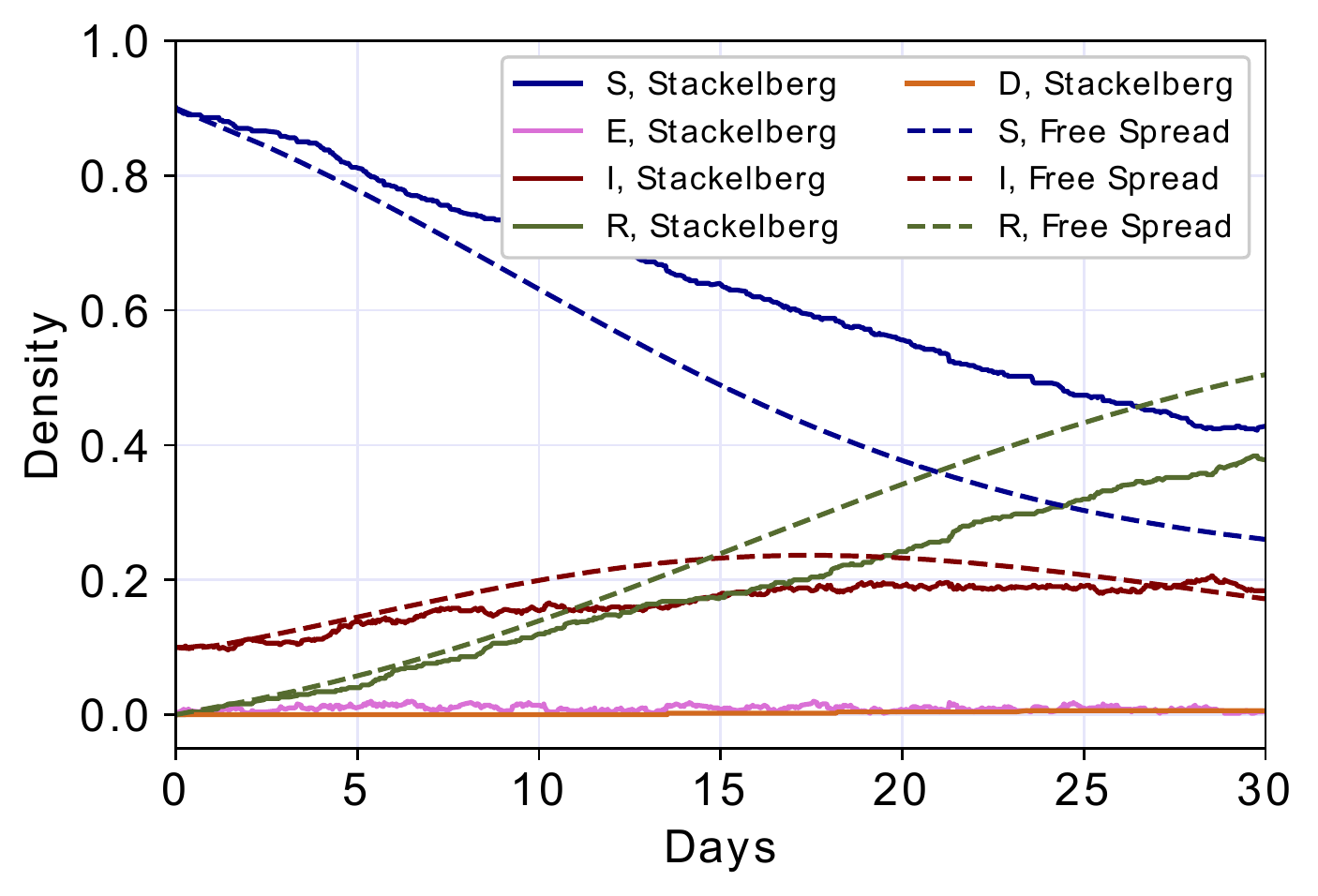}
\end{subfigure}
\begin{subfigure}{.48\textwidth}
    \includegraphics[width=0.85\linewidth]{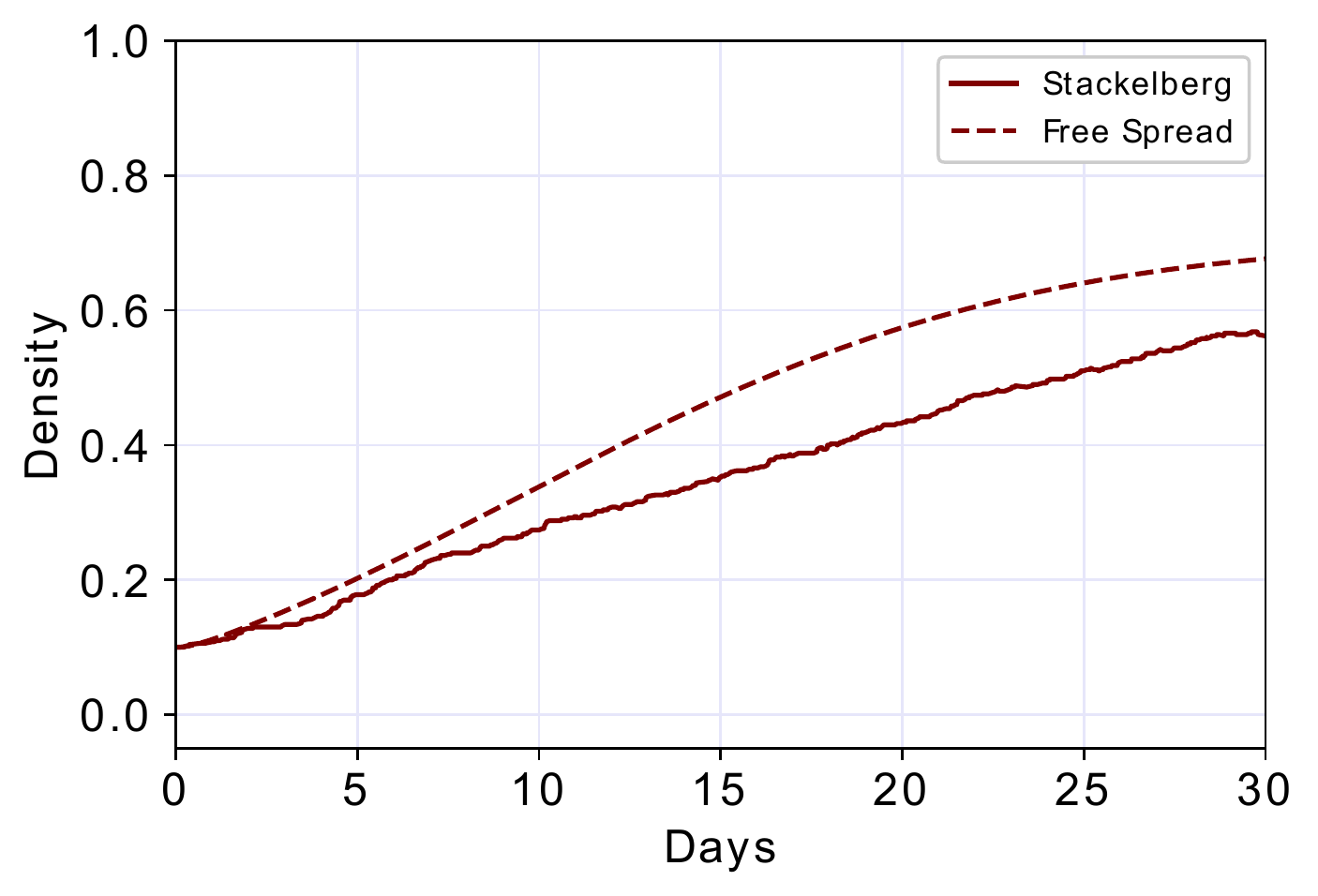}
\end{subfigure}
\begin{subfigure}{.48\textwidth}
    \includegraphics[width=0.9\linewidth]{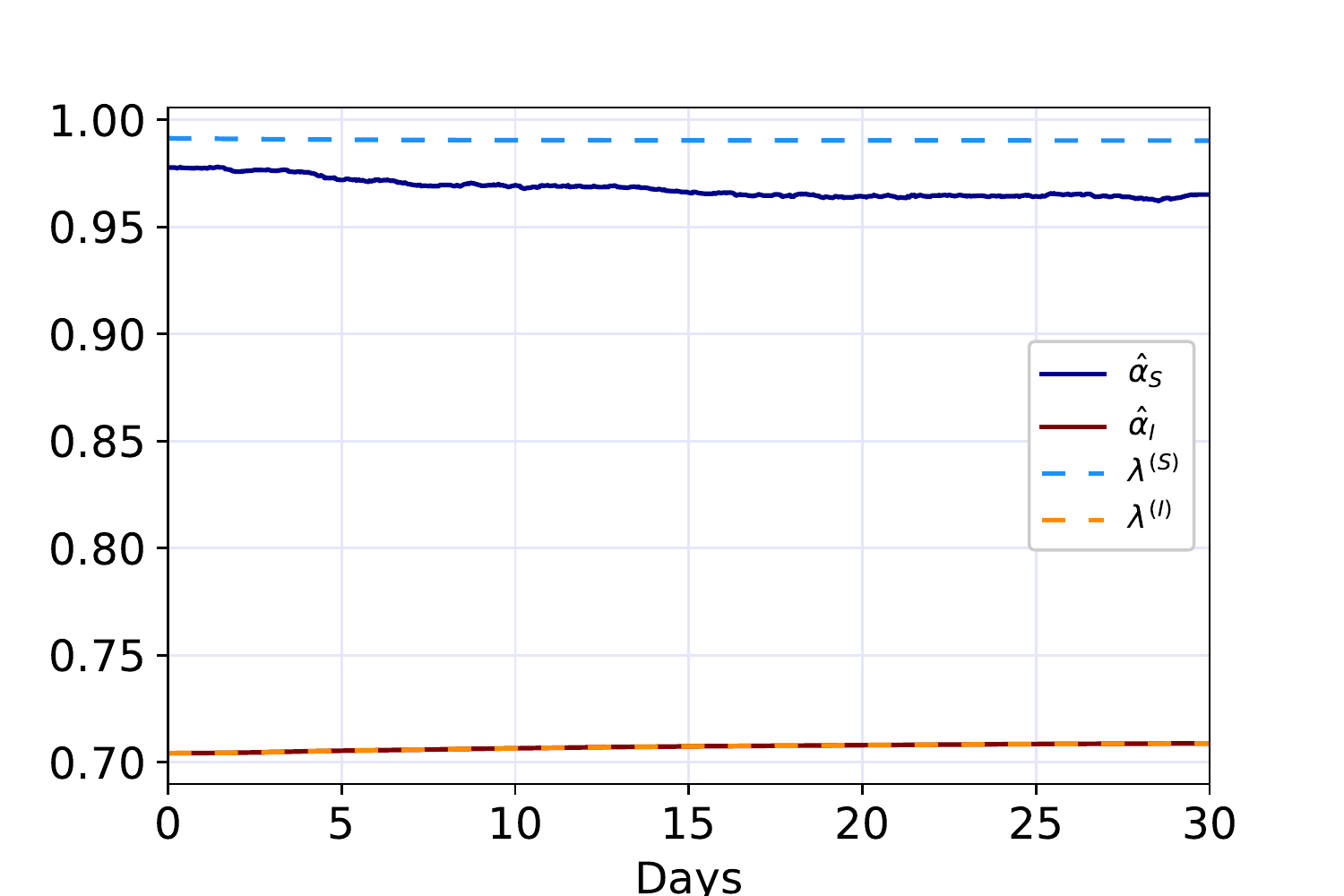}
\end{subfigure}
\begin{subfigure}{.48\textwidth}
    \includegraphics[width=0.9\linewidth]{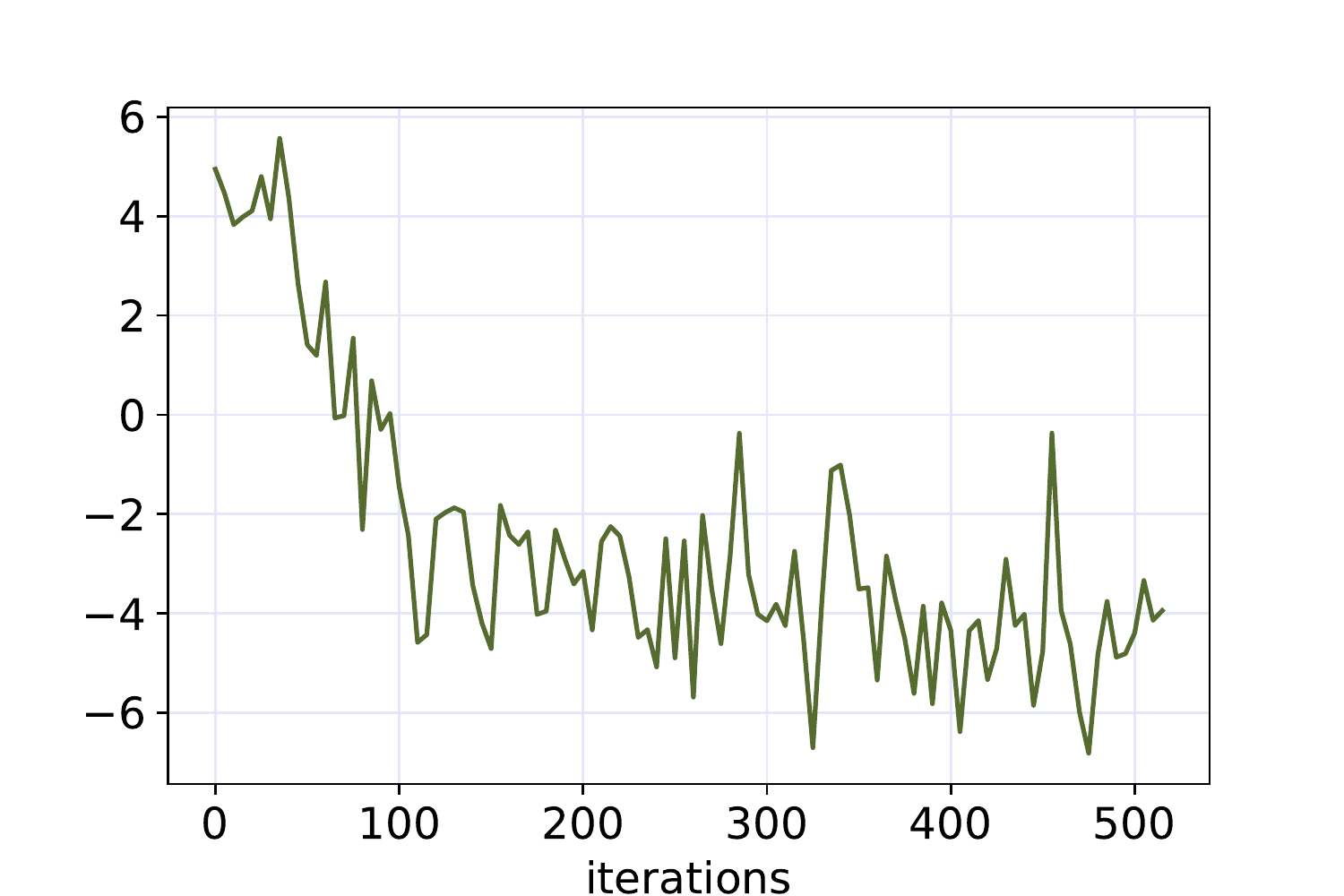}
\end{subfigure}

\caption{SEIRD Stackelberg MFG with Algorithm~\ref{algo:SGD-SMFG} in comparison with free spread SEIRD dynamics. Comparison of the Evolution of the state distribution (top left), Comparison of the Cumulative Density of Infected people under Stackelberg MFG and Free Spread (top right); evolution of the controls (bottom left), convergence of the loss (bottom right).}
\label{fig:SEIRD}
\end{figure}

\begin{table}[h!]
\caption{Parameter values for the SEIRD Stackelberg MFG experiment.}
\centering
\ra{1.4}
\begin{tabular}{@{}cccccccccccc@{}}
\toprule
 $T$ & $p^0$ & $c_\lambda$ & $c_I$ & $c_{\mathrm{Inf}}$ & $\bar\beta$ & $\bar\lambda$
\\
\midrule
$30$ & $(0.9,0,0.1,0,0)$ & $10$ & $1$ & $1$ & $(0.2,0.2,1,0)$ & $(1,1,0.7,1)$
\\
\toprule
$\beta$ & $\gamma$ & $\eta$ & $\epsilon$ & $\delta$ & $c_d$ &$c_D$
\\
\midrule
$0.25$ & $0.1$ & $0.01$ & $2$ & $0.01$ & $20$ & $20$
\\
\bottomrule
\end{tabular}
\label{tab:third-exp}
\end{table}

\section{Conclusions}
\label{sec:conclusions}

We have studied a Stackelberg mean field game with finite state space using a probabilistic approach. Compared with deterministic approaches using systems of ODEs, this approach has the advantage of describing the evolution of the system from the point of view of a typical (infinitesimal) agent. The theoretical contributions of this paper relate to formulating the Stackelberg game between a principal and a non-cooperative population. To cover the case of populations given by extended MFGs we establish the results in Section~\ref{sec:general-analysis}. We have then applied this class of models to a problem of epidemic containment with and SIR-type dynamics. In contrast with the existing literature, our model incorporates at the same time a non-cooperative population and a regulator such as a government. This problem, which can be viewed as a control problem under a constraint given by a Nash equilibrium, is complex because the regulator's decisions influence only indirectly the population's equilibrium. A naive approach would have been to solve a Nash equilibrium for each choice of the regulator's policy but this is computationally prohibitive for our model. Thus, building on the probabilistic approach, we introduced a numerical method based on neural network approximation and Monte Carlo simulations to compute the optimal policy (see Algorithm~\ref{algo:simu-forward-XY} and \ref{algo:SGD-SMFG}). We presented several numerical examples and for some of them we managed to derive semi-explicit solutions which can be used as benchmarks (see Proposition~\ref{prop:SIR-example}).   In particular, we have shown the difference between the uncontrolled scenario, the Nash equilibrium without regulator's intervention, and equilibria arising from early or late lockdowns. We have also shown that the numerical scheme can approximately learn the regulator's optimal policy, including a non-trivial model in Section~\ref{sec:num-SEIRD}.  Overall, the numerical approach is able to capture well the evolution of the epidemic in society and provides a satisfactory approximation of the optimal policy of the regulator in presence of a large number of non-cooperative agents.

Several directions are left for future work. For example, on a theoretical side, we may analyze Stackelberg mean field games with interaction through the joint state-action distribution under weaker assumptions. From an applied viewpoint, we may consider more complex finite-state models for instance to add a regulation aspect to SIR-like models appeared recently in the epidemiological literature. We believe that the derivation of the semi-explicit solutions can be generalized to more evolved compartmental models. On the numerical side, the algorithm we proposed is based on approximation by neural networks and could potentially handle even more complex models. This aspect is important for applications to epidemiological models. Some realistic features we could consider in future work are for instance age structure or geography with a network of cities. In both cases, the population is split into more sub-groups, which increases the number of possible states. Another important point is the impact of testing on the ability to take optimal decisions. Indeed, testing is directly related to the uncertainty of the regulator on the population distribution (e.g., to know what is the current proportion of infected people). For this aspect, it seems that a probabilistic approach like the one we adopted is particularly well-suited.


\appendix

\section{Proofs for Section~\ref{sec:main}}
\label{sec:proof-prop-reduction}

Propositions~\ref{prop:connection-Nash-BSDE} and \ref{prop:rewriting} are adaptations of \cite[Thm. 1]{Carmona2018Contract} and \cite[Thm. 2]{Carmona2018Contract}, respectively, to the extended case, \textit{i.e.}, to the case where the players interact through the joint distribution of the states and the actions and not only through the distribution of the states. For the sake of brevity we omit the proofs.

\begin{proof}[Proof of Proposition~\ref{prop:reduction}]
We define the representative agent's Hamiltonian in the regular mean-field game $h(t,x,z,\alpha,p)$ in line with $H$, but with the "overlined" functions $\bar{f},\bar{Q}$ replacing $f$ and $Q$. It follows from the assumptions that $\bar{a}_i(t,z,p)$ is the minimizer of  $\alpha \mapsto h(t,e_i,z,\alpha,p)$ for $(t,i,z,p)\in[0,T]\times \{1,\dots, m\}\times \mathbb{R}^m\times\mathcal{P}(E)$. Let $\bar{H}(t,e_i,z,p) := h(t,e_i,z,\bar{a}_i(t,z,p),p)$.

By Proposition~\ref{prop:reduction}, the pair $(\boldsymbol{\hat{\alpha}},\boldsymbol{\hat{\rho}})$ is a mean-field Nash equilibrium. Turning to non-extended mean-field Nash equilibrium, there exists a pair $(\boldsymbol{\alpha', p'})$ satisfying Definition~\ref{def:weak-mfg-equilibrium}, see for example \cite[Thm 4.1]{Carmona2018Extended}. Consequently, by  \cite[Thm. 1]{Carmona2018Contract}, there exists a solution $(\boldsymbol{Y', Z', \alpha', p', \mathbb{Q}'})$ to the system (under $\mathbb{P}$)
\begin{equation}
\label{eq:MKV-basde2}
\left\{
\begin{aligned}
    Y'_t 
    &= 
    U(\xi)
    + \int_t^T \bar{H}(s,X_{s-}, Z'_s, p'_s)ds - \int_t^T (Z'_s)^*d\mathcal{M}_s
    \\
    \mathcal{E}'_t 
    &= 
    1 + \int_0^t \mathcal{E}'_{s-}X^*_{s-}\left(\bar{Q}(s,\alpha'_s, p'_s) - Q^0\right)\psi^+_sd\mathcal{M}_s,
    \\
    \alpha'_t
    &= 
    \bar{a}(t,X_{t-},Z'_t, p'_t),\ \
    p'_t = \mathbb{Q}'\circ\left(X_t \right)^{-1},\ \
    \frac{d\mathbb{Q}'}{d\mathbb{P}} = \mathcal{E}'_T,
    \end{aligned}
    \right.
\end{equation}
such that $\boldsymbol{\alpha'} = \boldsymbol{\bar{\alpha}}$ $d\mathbb{P}\otimes dt$-a.e. and $p'_t = \bar{p}_t$ $dt$-a.e.. 

The claim $\hat{p}_t = \bar{p}_t$ $dt$-a.e. $t\in[0,T]$ is equivalent to
    \begin{equation}
    \label{eq:thm-proof-1}
        \mathbb{E}^{\hat{\mathbb{Q}}}\left[X_t\right]-\mathbb{E}^{\mathbb{Q}'}\left[X_t\right] = \mathbb{E}^{\mathbb{P}}[(\hat{\mathcal{E}}_t - \mathcal{E}'_t)X_t] = 0,\quad dt\text{-a.e. } t\in[0,T].
    \end{equation}
Let $\hat\sigma_t$ and $\sigma_t'$ be the volatility (viewed as row vectors in $\mathbb{R}^m$) of $\hat{\mathcal{E}}_t$ and $\mathcal{E}'_t$, respectively,
\begin{equation}
    \hat \sigma_t = 
    \hat{\mathcal{E}}_{t-}X^*_{t-} \left(\bar{Q}(t,\hat{\alpha}_t,\hat{p}_t) - Q^0\right) \psi^+_t,
    \quad
    \sigma_t' 
    = 
    \mathcal{E}'_{t-}X^*_{t-} \left(\bar{Q}(t,\alpha'_t,p'_t) - Q^0\right) \psi^+_t, 
\end{equation}
and let $\Delta \hat{\mathcal{E}}_t := \hat{\mathcal{E}}_t - \hat{\mathcal{E}}_{t-}$. In the same way we define $\Delta X_t$ and $\Delta \mathcal{E}'_t$. Since
\begin{equation}
    \Delta\hat{\mathcal{E}}_t = \hat{\sigma}_t\Delta X_t,\quad \Delta\mathcal{E}'_t = \sigma'_t\Delta X_t,
\end{equation}
we have by Ito's formula that under $\mathbb{P}$,
\begin{equation}
\begin{aligned}
        &d\|(\hat{\mathcal{E}}_t-\mathcal{E}'_t) X_t\|^2 \\
        &=
        2\left(\hat{\mathcal{E}}_{t-} - \mathcal{E}'_{t-}\right)^2X_{t-}^*Q^0X_{t-}dt 
        + 2\left(\hat{\mathcal{E}}_{t-} - \mathcal{E}'_{t-}\right)X_{t-}^*\psi_t\left(\hat{\sigma}_t - \sigma'_t\right)^*dt
        \\
        &\quad +
        \text{Tr}\left[ \left(\Sigma_t
        + (\hat{\mathcal{E}}_t - \mathcal{E}'_t)I_{m}\right)\psi_t \left(\Sigma_t +  (\hat{\mathcal{E}}_t - \mathcal{E}'_t)I_{m}\right)^* \right]dt
        +
        d \widetilde{\mathcal{M}}_t,
\end{aligned}
\end{equation}
where $\boldsymbol{\widetilde{\mathcal{M}}}$ is a $\mathbb{P}$-martingale, $I_m$ is the identity matrix in $\mathbb{R}^{m\times m}$ and
\begin{equation}
    \mathbb{R}^{m\times m} \ni \Sigma_t := 
    [X_t(i)\left(\hat\sigma_t(j)-\sigma'_t(j)\right)]_{ij}
\end{equation}
Firstly, we have that $\text{Tr}[\psi_t] = \sum_{i=1}^m [Q^0]_{X_{t-},i} = 0$, secondly, 
\begin{equation}
    \text{Tr}[\Sigma_t \psi_t] = \text{Tr}[\psi_t\Sigma_t^*] = 
    (m-1)\left(\hat{\sigma}_t - \sigma'_t\right) X_{t-},
\end{equation} 
and finally, $\text{Tr}[\Sigma_t\psi_t\Sigma^*_t] = (m-1)\left((\hat{\sigma}_t-\sigma'_t)X_{t-}\right)^2$. We gather that
\begin{equation}
\begin{aligned}
        &\text{Tr}\left[ \left(\Sigma_t 
        + (\hat{\mathcal{E}}_t - \mathcal{E}'_t)I_{m\times m}\right)\psi_t \left(\Sigma_t +  (\hat{\mathcal{E}}_t - \mathcal{E}'_t)I_{m\times m}\right)^* \right]
        \\
        &=
        2(m-1)(\hat{\sigma}_t-\sigma'_t)( \hat{\mathcal{E}}_{t-}-\mathcal{E}'_{t-}) X_{t-} + (m-1)\left((\hat{\sigma}_t-\sigma'_t)X_{t-}\right)^2.
\end{aligned}
\end{equation}
Using our calculations above, Young's inequality, Gronwall's lemma, and the Lipschitz continuity of $\bar{Q}$ and $\bar{a}$ (see Hypotheses~\ref{hyp:q} and~\ref{hyp:alpha-lip-in-z}), we get that for $dt$-a.e. $t\in[0,T]$,
    \begin{equation}
    \begin{aligned}
           \mathbb{E}^{\mathbb{P}}\left[\|(\hat{\mathcal{E}}_t-\mathcal{E}'_t)X_t\|^2\right]  
            &\leq
            C\mathbb{E}^{\mathbb{P}}\left[\int_0^t |\bar{a}(s,X_{s-},\hat{Z}_s,\hat{p}_s) - \bar{a}(s,X_{s-},Z'_s,p'_s)|^2ds \right]
            \\
            & \leq
            C\mathbb{E}^{\mathbb{P}}\left[\int_0^t
            \left(\|\hat{Z}_s- Z'_s\|_{X_{s-}}^2 + \|\hat{p}_s-p'_s\|^2\right)ds \right]
    \end{aligned}
    \end{equation}
    for some positive constant $C$.
    The estimate holds only $dt$-a.s. since 
    \begin{equation}
        (\hat{\mathcal{E}}_{t-}-\mathcal{E}'_{t-})X_{t-} = (\hat{\mathcal{E}}_t-\mathcal{E}'_t)X_t,
    \end{equation}
    an equality we need in order to apply Gronwall's lemma, holds only $dt\text{-a.e. }t\in[0,T]$.
    The constant $C$ depends on $\mathbb{E}^{\mathbb{P}}\left[\int_0^t \mathcal{E}^2_{t-}dt\right], \mathcal{E} \in \{ \hat{\mathcal{E}}, \mathcal{E}' \}$, which is bounded since $Q, \bar{Q}$, $Q^0$, and $X_{t-}$ are bounded. After one more use of Gronwall's lemma, we arrive to
    \begin{equation}
        \mathbb{E}^{\mathbb{P}}\left[\|(\hat{\mathcal{E}}_t-\mathcal{E}'_t)X_t\|^2\right]  \leq C\mathbb{E}^{\mathbb{P}}\left[\int_0^t \|\hat{Z}_s - Z'_s\|^2_{X_{s-}} ds \right],\qquad dt\text{-a.e. } t\in[0,T].
    \end{equation}
    Consider the difference process $(\boldsymbol{\delta Y, \delta Z}) := (\boldsymbol{\hat{Y}-Y', \hat{Z}-Z'})$. The BSDE estimate of \cite[Thm. 4.2.3]{zhang2017backward} yields the second inequality below, the first one follows by the equivalence of norms on $\mathbb{R}^m$:
    \begin{equation}
    \mathbb{E}^{\mathbb{P}}\left[\int_0^t \|\hat{Z}_s - \bar{Z}_s\|^2_{X_{s-}} ds \right]
    \leq 
    C\mathbb{E}^{\mathbb{P}}\left[
    \int_0^T\|\delta Z_t\|^2dt \right] 
    \leq 
    C
    \int_0^T\|\hat{p}_t-p'_t\|^2dt.
    \end{equation}
After one final application of Gronwall's lemma we see that $\hat{p}_t = p'_t$ for $dt$-a.e. $t\in [0,T]$ and therefore $\hat{p}_t = \bar{p}_t$ for $dt$-a.e. $t\in[0,T]$.

Finally, we compare the controls and get
\begin{equation}
    |\hat{\alpha}_t - \bar{\alpha}_t|
    \leq
    C\left(\|\hat{Z}_t-Z'_t\|_{X_{t-}} + \|\hat{p}_t - p'_t\|\right),\quad  d\mathbb{P}\otimes dt\text{-a.s.}
\end{equation}
where we used Hypothesis~\ref{hyp:reduction} to link $\hat{a}$ and $\bar{a}$, and to exploit the Lipschitz continuity of $\bar{a}$. 
Using the previous calculations, we conclude that $\hat{\alpha}_t = \bar{\alpha}_t,\ d\mathbb{P}\otimes dt\text{-a.s.}$.
\end{proof}

\section{Proof for Section~\ref{subsec:experiments}}
\label{app:SIR-derivation}

\begin{proof}[Proof of Proposition~\ref{prop:SIR-example}]
In the setting of the example Hypotheses~\ref{hyp:q}--\ref{hyp:minimization-of-H-extended}, \ref{hyp:cost-in-tilde-V}, and \ref{hyp:reduction} hold true. The minimizers of the reduced Hamiltonians are
\begin{equation}
\label{eq:hat-alpha-SIR}
\begin{aligned}
    \hat{a}_S(t,z,\rho) &=  \lambda^{(S)}_t + \frac{\beta}{c_\lambda}\left(\int_A a \rho_t(da,I)\right)(z(S)-z(I)),
    \\
    \hat{a}_S(t,z,\rho) &= \lambda^{(I)}_t,
    \qquad
    \hat{a}_S(t,z,\rho) = \lambda^{(R)}_t.
\end{aligned}
\end{equation}
Imposing the consistency condition on \eqref{eq:hat-alpha-SIR} we see that the game satisfies hypotheses~\ref{hyp:reduction}(ii) and \ref{hyp:reduction}(iii). Thus, since we assume hypothesis~\ref{hyp:reduction}(i), Proposition~\ref{hyp:reduction} says that the mean field Nash equilibrium is almost surely equal to the solution of the regular mean-field game with transition rate matrix
\begin{equation}
    \bar{Q}(t,\alpha,p) = 
    \begin{bmatrix}
    \cdots & \beta\alpha\lambda^{(I)}_tp(I) & 0
    \\
    0 & \cdots & \gamma
    \\
    \eta  & 0 & \cdots
    \end{bmatrix},
\end{equation}
and the minimizers of the reduced Hamiltonians for this regular mean field game are
\begin{equation}
\label{eq:bar-alpha-SIR}
\begin{aligned}
    \bar{a}_S(t,z,\rho) &=  \lambda^{(S)}_t + \frac{\beta}{c_\lambda}\lambda^{(I)}_tp(I)(z(S)-z(I)),
    \\
    \bar{a}_S(t,z,\rho) &= \lambda^{(I)}_t,
    \qquad
    \bar{a}_S(t,z,\rho) = \lambda^{(R)}_t.
\end{aligned}
\end{equation}
The principal's problem, after the same rewriting that yielded \eqref{eq:Y-ZlambdaY0}, reads
\begin{equation}
    \begin{aligned}
        V(\kappa) = 
        &\inf_{\mathbb{E}[Y_0]\leq
        \kappa}\inf_{\substack{\boldsymbol Z \in \mathcal{H}^2_X \\ \boldsymbol\lambda \in \Lambda}}\mathbb{E}^{\mathbb{Q}^{\boldsymbol Z, \boldsymbol \lambda, Y_0}}\Bigg[
        \int_0^T \left( c_0(t,p_t^{\boldsymbol Z,\boldsymbol \lambda, Y_0}) + f_0(t,\lambda_t)\right)dt - Y_T^{\boldsymbol Z,\boldsymbol \lambda, Y_0}
        \Bigg]
        \\
        &= - \kappa + \inf_{\substack{\boldsymbol Z\in\mathcal{H}^2_X\\\boldsymbol \lambda \in \Lambda }}
        \mathbb{E}^{\mathbb{Q}^{\boldsymbol Z,\boldsymbol \lambda, Y_0}}\Bigg[ 
        \int_0^T \Bigg( c_0(t,p_t^{\boldsymbol Z,\boldsymbol \lambda, Y_0}) + f_0(t,\lambda_t)
        \\
        &\hspace{1cm}
        +\frac{c_\lambda}{2}\left(\lambda^{(S)}_t - \bar{a}_S(t,Z_t,p_t^{\boldsymbol Z,\boldsymbol \lambda, Y_0})\right)^2\mathbbm{1}_S(X_{t-}) + c_I\mathbbm{1}_I(X_{t-})\Bigg)dt
        \Bigg],
    \end{aligned}
\end{equation}
 where $-\kappa$ is achieved by $\mathbb{E}[Y_0]$.

The principal's problem $V(\kappa)$ can be recast as an optimization problem over $\mathbb{A}\times\Lambda$. Given $\boldsymbol\alpha\in\mathbb{A}$ let $\boldsymbol{\bar{Z}}$ be given by, for now formally,
\begin{equation}
    \bar{Z}_t(\boldsymbol\alpha) := \left(\frac{c_\lambda(\alpha_t - \lambda^{(S)}_t)}{\beta\lambda^{(I)}_tp_t^{\boldsymbol {\bar{Z}(\boldsymbol\alpha)},\boldsymbol \lambda, Y_0}(I)}\mathbbm{1}_{S}(X_{t-})\mathbbm{1}(\lambda^{(I)}_t>0),0,0\right),\quad t\in[0,T].
\end{equation}
\begin{lemma}
\label{prop:Z-to-alpha}
Given $\boldsymbol \alpha\in\mathbb{A}$ and $\boldsymbol\lambda\in \Lambda$, let 
\begin{equation}
    \Psi(\boldsymbol z) =
    \left(\frac{c_\lambda(\alpha_t - \lambda^{(S)}_t)}{\beta\lambda^{(I)}_t\Phi_t^{\boldsymbol z}(I)}\mathbbm{1}_{S}(X_{t-})\mathbbm{1}(\lambda^{(I)}_t>0), 0, 0\right)_{t\in [0,T]}
\end{equation}
for $\boldsymbol z \in \mathcal{H}^2_X$, where $\Phi_t^{\boldsymbol z}$ is given by
\begin{equation}
    \begin{aligned}
        \Phi_t^{\boldsymbol z} = p_0 + \int_0^t \mathbb{E}^{\mathbb{Q}^{\boldsymbol {z},\boldsymbol \lambda, Y_0}}\left[ \bar{Q}^*(s,\bar{\alpha}(s, X_{s-}, z_s, \Phi_s^{\boldsymbol z}), \Phi_s^{\boldsymbol z})X_{s-} \right]ds.
    \end{aligned}
\end{equation}
Then $\Psi$ is well defined as a mapping from $\mathcal{H}^2_X$ to itself and there exists a unique fixed point to $\Psi$. We denote the fixed point of $\Psi$ by $\boldsymbol{\bar Z}(\boldsymbol\alpha)$ and $\Phi_t^{\boldsymbol{\bar Z}(\boldsymbol\alpha)}$ by $p_t^{\boldsymbol{\bar Z}(\boldsymbol\alpha), \boldsymbol \lambda, Y_0}$. Furthermore, for all $t\in[0,T]$,
\begin{equation}
\label{eq:lemma-fixed-point-in-bar-a}
\bar{a}(t, X_{t-}, \bar{Z}_t(\boldsymbol\alpha), p^{\boldsymbol{\bar{Z}}(\boldsymbol \alpha),\boldsymbol\lambda, Y_0}_t)
= 
\alpha_t\mathbbm{1}_S(X_{t-})
+ \lambda^{(I)}_t\mathbbm{1}_{I}(X_{t-})
+ \lambda^{(R)}_t\mathbbm{1}_{R}(X_{t-}).
\end{equation}
\end{lemma}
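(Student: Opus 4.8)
The statement bundles three claims: (a) for every $\boldsymbol z\in\mathcal{H}^2_X$ the flow $\Phi^{\boldsymbol z}$, together with the measure $\mathbb{Q}^{\boldsymbol z,\boldsymbol\lambda,Y_0}$ it implicitly involves, is well defined; (b) $\Psi$ maps $\mathcal{H}^2_X$ into itself and admits a unique fixed point; (c) the fixed point satisfies \eqref{eq:lemma-fixed-point-in-bar-a}. I would establish them in this order.

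For (a), fix $\boldsymbol z$ and $\boldsymbol\lambda$. Unwinding the definition of $\mathbb{Q}^{\boldsymbol z,\boldsymbol\lambda,Y_0}$ in \eqref{eq:Y-ZlambdaY0} under the reduction of Hypothesis~\ref{hyp:reduction}, the deterministic flow $\Phi^{\boldsymbol z}$ solves a closed McKean--Vlasov forward equation: to a candidate $\pi\in C([0,T];\mathcal{P}(E))$ one associates, via the Girsanov change of measure with rate $\bar Q(s,\bar a(s,X_{s-},z_s,\pi_s),\pi_s)-Q^0$, a probability $\mathbb{Q}^{\pi}$, and then requires $\pi_t=p_0+\int_0^t\mathbb{E}^{\mathbb{Q}^{\pi}}[\bar Q^*(s,\bar a(s,X_{s-},z_s,\pi_s),\pi_s)X_{s-}]\,ds$. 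I would show this self-map of $C([0,T];\mathcal{P}(E))$ is Lipschitz with a time-integrated (``causal'') modulus, using the Lipschitz continuity of $\bar Q$ and $\bar a$ (Hypotheses~\ref{hyp:q}, \ref{hyp:alpha-lip-in-z}, \ref{hyp:reduction}) together with the fact that, since the entries of $\bar Q$, $Q^0$ and the coordinate process are bounded (Hypothesis~\ref{hyp:q}(i)), the Dol\'eans exponential $\mathcal{E}_T$ has moments bounded uniformly in $\pi$ and $\boldsymbol z$; iterating and applying Gronwall's lemma produces the unique fixed point $\Phi^{\boldsymbol z}$, which is deterministic, being a flow of expectations. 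The same boundedness supplies the quantitative ingredient needed below: the exit rate from state $I$ is at most $C_2$, so $t\mapsto\Phi^{\boldsymbol z}_t(I)$ has derivative at least $-C_2\Phi^{\boldsymbol z}_t(I)$, whence $\Phi^{\boldsymbol z}_t(I)\ge p_0(I)e^{-C_2 T}=:c_\star>0$ uniformly in $t\in[0,T]$ and $\boldsymbol z$ (using $p_0(I)>0$).

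For (b), fix $\boldsymbol\alpha\in\mathbb{A}$. Since $\boldsymbol\alpha,\boldsymbol\lambda$ are bounded and $\Phi^{\boldsymbol z}_t(I)\ge c_\star$, the process $\Psi(\boldsymbol z)$, whose only nonzero component is the $S$-one, equal to a bounded deterministic function of $t$ times $\mathbbm{1}_S(X_{t-})$, is $\mathbb{F}$-adapted, left-continuous and bounded, hence lies in $\mathcal{H}^2_X$; so $\Psi$ is a self-map of $\mathcal{H}^2_X$. For the fixed point I would compare flows: the Lipschitz estimates of (a), applied to both the flow and the Girsanov density, and Gronwall's lemma give $\sup_{s\le t}|\Phi^{\boldsymbol z}_s(I)-\Phi^{\boldsymbol z'}_s(I)|^2\le C\int_0^t\mathbb{E}^{\mathbb{P}}[\|z_s-z'_s\|^2_{X_{s-}}]\,ds$; since $\Psi(\boldsymbol z)_t-\Psi(\boldsymbol z')_t$ is proportional to $\Phi^{\boldsymbol z}_t(I)^{-1}-\Phi^{\boldsymbol z'}_t(I)^{-1}$ and $|\Phi^{\boldsymbol z}_t(I)^{-1}-\Phi^{\boldsymbol z'}_t(I)^{-1}|\le c_\star^{-2}|\Phi^{\boldsymbol z}_t(I)-\Phi^{\boldsymbol z'}_t(I)|$, one gets $\|\Psi(\boldsymbol z)-\Psi(\boldsymbol z')\|^2_{\mathcal{H}^2_X}\le C\int_0^T\int_0^t\mathbb{E}^{\mathbb{P}}[\|z_s-z'_s\|^2_{X_{s-}}]\,ds\,dt$. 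This is a contraction for small $T$; for arbitrary $T$ one iterates $\Psi$, the $n$-fold iterated time integral contributing a factor $(CT)^n/n!\to0$, so some power $\Psi^{(n)}$ is a strict contraction on $\mathcal{H}^2_X$ and Banach's fixed point theorem yields the unique fixed point, which we call $\boldsymbol{\bar Z}(\boldsymbol\alpha)$, setting $p^{\boldsymbol{\bar Z}(\boldsymbol\alpha),\boldsymbol\lambda,Y_0}:=\Phi^{\boldsymbol{\bar Z}(\boldsymbol\alpha)}$.

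For (c), substitute $\boldsymbol z=\boldsymbol{\bar Z}(\boldsymbol\alpha)$ into \eqref{eq:bar-alpha-SIR}: on $\{X_{t-}=e_I\}$ and $\{X_{t-}=e_R\}$ the minimizers are $\lambda^{(I)}_t$ and $\lambda^{(R)}_t$, independently of the $z$-argument; on $\{X_{t-}=e_S\}$ the fixed-point vector $\bar Z_t(\boldsymbol\alpha)$ has vanishing $I$- and $R$-components and an $S$-component chosen precisely so that $\tfrac{\beta}{c_\lambda}\lambda^{(I)}_t\,p^{\boldsymbol{\bar Z}(\boldsymbol\alpha),\boldsymbol\lambda,Y_0}_t(I)\,\bar Z_t(\boldsymbol\alpha)(S)=(\alpha_t-\lambda^{(S)}_t)\mathbbm{1}(\lambda^{(I)}_t>0)$, so $\bar a_S=\alpha_t$ on $\{\lambda^{(I)}_t>0\}$; on the complementary set $\bar a_S$ does not involve $z$ and \eqref{eq:lemma-fixed-point-in-bar-a} is to be read accordingly. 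This is exactly \eqref{eq:lemma-fixed-point-in-bar-a}. The main obstacle is (a): the system for $(\Phi^{\boldsymbol z},\mathbb{Q}^{\boldsymbol z,\boldsymbol\lambda,Y_0})$ is genuinely coupled — the change of measure depends on the very flow one solves for — and, above all, one needs the uniform lower bound $\Phi^{\boldsymbol z}_t(I)\ge c_\star>0$, which is what makes $\Psi$ take values in $\mathcal{H}^2_X$ and be Lipschitz; the eventual-contraction step and the algebraic substitution are then routine.
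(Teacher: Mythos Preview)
Your proposal is correct and follows essentially the same route as the paper's proof: a uniform positive lower bound on $\Phi^{\boldsymbol z}_t(I)$ to show $\Psi$ is well defined as a self-map of $\mathcal{H}^2_X$, a Gronwall-based Lipschitz estimate on $\Phi^{\boldsymbol z}_t(I)^{-1}$ (using boundedness of the rates, $\boldsymbol\lambda$, and the likelihood process), iterated contraction to apply Banach's fixed-point theorem, and direct substitution into \eqref{eq:bar-alpha-SIR} for the final identity. If anything, you are more explicit than the paper---you justify the well-posedness of the implicit flow $\Phi^{\boldsymbol z}$ itself, give a concrete lower bound $p_0(I)e^{-C_2T}$ (the paper simply asserts a bound exists), and spell out the $(CT)^n/n!$ factor in the iterated contraction---but the architecture is the same.
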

\begin{proof} 
The first observation is there exists a uniform lower bound $c$ such that $\Phi^z_t(i) \geq c > 0$ for $i\in\{S,I,R\}$ and $t\in (0,T]$. This grants the first assertion of the lemma, that $\Psi(\boldsymbol z) \in \mathcal{H}^2_X$ for all $z\in \mathcal{H}^2_X$.  
Secondly, we have that
\begin{equation}
\label{eq:inverse-lipschitz}
    \left|\Phi^z_t(I)^{-1} - \Phi^{z'}_t(I)^{-1}\right|^2
    \leq C \mathbb{E}\left[\int_0^t \|z_s-z_s'\|^2 dt\right]
\end{equation}
for some positive constant $C$.
To get \eqref{eq:inverse-lipschitz} we used Gronwall's lemma, Cauchy-Schwartz inequality, and the boundedness of the coefficients, $\boldsymbol \lambda$, and the likelihood process $d\mathbb{Q}^{\boldsymbol z,\boldsymbol\lambda, Y_0}/d\mathbb{P}$. From \eqref{eq:inverse-lipschitz} follows that
\begin{equation}
    \mathbb{E}\left[\left\|\Psi_t(\boldsymbol z) - \Psi_t(\boldsymbol {z'}) \right\|^2 \right]
    \leq C \mathbb{E}\left[\int_0^t \|z_s-z_s'\|^2dt\right], \quad t\in[0,T].
\end{equation}
and hence $\Psi^N$ is a contraction mapping for sufficiently large $N$ (by equivalence of norms on $\mathbb{R}^m$).
The existence of a unique fixed point of $\Psi$ follows by the Banach fixed-point theorem for iterated mappings. Finally, \eqref{eq:lemma-fixed-point-in-bar-a} follows by plugging in $\boldsymbol{\bar{Z}}(\boldsymbol \alpha)$ and $\boldsymbol p^{\boldsymbol{\bar{Z}}\boldsymbol\alpha, \boldsymbol\lambda, Y_0}$ into \eqref{eq:bar-alpha-SIR}.
\end{proof}
In light of Lemma~\ref{prop:Z-to-alpha} and since $Q$ and  $\bar{\alpha}$ do not depend on the representative agent's expected total cost $Y_0$, the principal's problem can be transformed to
\begin{equation}
    \begin{aligned}
        W
        &= 
        \inf_{\boldsymbol\alpha\in\mathbb{A}, \boldsymbol{\lambda}\in\Lambda}
        I(\boldsymbol\alpha,\boldsymbol\lambda),
        \\
        I(\boldsymbol\alpha,\boldsymbol\lambda)
        &:=
        \mathbb{E}^{\mathbb{Q}^{\boldsymbol\alpha, \boldsymbol\lambda}}\Bigg[
        \int_0^T 
        \Big(c_0(t,p^{\boldsymbol\alpha,\boldsymbol\lambda}_t) + f_0(t,\lambda_t) 
        + \frac{c_\lambda}{2}\left(\lambda^{(S)}_t - \alpha_t\right)^2\mathbbm{1}_S(X_{t-}) + c_I\mathbbm{1}_I(X_{t-})\Big)dt
        \Bigg].
    \end{aligned}
\end{equation}
The measure $\mathbb{Q}^{\boldsymbol\alpha,\boldsymbol\lambda}$ is such that the coordinate process $\boldsymbol X$ has transition rate matrix $\bar{Q}(t,\alpha_t, p^{\boldsymbol\alpha, \boldsymbol\lambda})$ under it, $p^{\boldsymbol\alpha, \boldsymbol\lambda}_t$ is the law of $X_t$ under $\bar{Q}(t,\alpha_t, p^{\boldsymbol\alpha, \boldsymbol\lambda})$.

From the decomposition of the coordinate process,
\begin{equation}
    X_t = X_0 + \int_0^t \bar{Q}^*(s, \alpha_s, p^{\boldsymbol\alpha, \boldsymbol\lambda}_s)X_{s-}ds + \mathcal{M}^{\boldsymbol\alpha, \boldsymbol\lambda}_t,
\end{equation}
we deduce the dynamic of $\boldsymbol p^{\boldsymbol\alpha, \boldsymbol\lambda}$:

\begin{equation}
    p^{\boldsymbol\alpha, \boldsymbol\lambda}_t = p^0 + \int_0^t \mathbb{E}^{\mathbb{Q}^{\boldsymbol\alpha, \boldsymbol\lambda}}\left[\bar Q^*(s, \alpha_s, p^{\boldsymbol\alpha, \boldsymbol\lambda}_s)X_{s-} \right]ds,
\end{equation}
where we see that the right-hand side depends on the control $\alpha_s$ only through the conditional expectation $\widetilde\alpha_s := \mathbb{E}^{\mathbb{Q}^{\boldsymbol\alpha,\boldsymbol\lambda}}[\alpha_s\ |\ X_{s-} = S]$, $s\in[0,T]$. Let
\begin{equation}
\begin{aligned}
    \widetilde I(\boldsymbol{\widetilde\alpha},\boldsymbol\lambda)
    :=
    \int_0^T 
    \Big(c_0(t,p^{\boldsymbol\alpha,\boldsymbol\lambda}_t) + f_0(t,\lambda_t) 
    + \frac{c_\lambda}{2}\left(\lambda^{(S)}_t - \widetilde\alpha_t\right)^2\mathbbm{1}_S(X_{t-}) + c_I\mathbbm{1}_I(X_{t-})\Big)dt
    \end{aligned}
\end{equation}
and consider the deterministic control problem
\begin{equation}
    \widetilde W := 
    \inf_{\boldsymbol{\widetilde\alpha}\in \widetilde{\mathbb{A}}, \boldsymbol\lambda \in \Lambda}
    \widetilde{I} (\boldsymbol{\widetilde\alpha}, \boldsymbol\lambda)
\end{equation}
where $\widetilde{\mathbb{A}}$ is the collection of all measurable mappings from $[0,T]$ to $A$.
\begin{lemma}
We have $W = \widetilde W$. If $(\boldsymbol{\widetilde \alpha},\boldsymbol\lambda)$ is a solution to the optimization problem $\widetilde W$, then the predictable process $\boldsymbol\alpha$ defined by $\alpha_t = \sum_{i\in\{S,I,R\}} \mathbbm{1}_{i}(X_{t-})\widetilde{\alpha}_t^{(i)}$ together with $\boldsymbol\lambda$ is an optimal control for $W$.
Furthermore, an optimal control exists for $\widetilde W$.
\end{lemma}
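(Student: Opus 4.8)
The plan is to prove, in order, the inequality $W\ge\widetilde W$, the reverse inequality $W\le\widetilde W$ together with the transfer of optimal controls, and finally the existence of a minimizer for $\widetilde W$; the first two parts are soft and rely on one convexity estimate, and the third is where the work lies. For $W\ge\widetilde W$ I would fix an arbitrary $(\boldsymbol\alpha,\boldsymbol\lambda)\in\mathbb A\times\Lambda$ and set $\widetilde\alpha_t:=\mathbb E^{\mathbb Q^{\boldsymbol\alpha,\boldsymbol\lambda}}[\alpha_t\mid X_{t-}=S]\in\widetilde{\mathbb A}$, which lies in $A=[0,1]$ by convexity of $A$. As noted just before the lemma, the forward equation for $\boldsymbol p^{\boldsymbol\alpha,\boldsymbol\lambda}$ sees $\boldsymbol\alpha$ only through $\widetilde\alpha$; its right-hand side being Lipschitz on the simplex, uniqueness of the ODE gives $\boldsymbol p^{\boldsymbol\alpha,\boldsymbol\lambda}=\boldsymbol p^{\boldsymbol{\widetilde\alpha},\boldsymbol\lambda}$, the flow driven by the deterministic control $\widetilde\alpha$. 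Then Jensen's inequality for conditional expectations, applied to the convex map $a\mapsto(\lambda^{(S)}_t-a)^2$, gives for a.e.\ $t$ that $\mathbb E^{\mathbb Q^{\boldsymbol\alpha,\boldsymbol\lambda}}[(\lambda^{(S)}_t-\alpha_t)^2\mathbbm 1_S(X_{t-})]\ge(\lambda^{(S)}_t-\widetilde\alpha_t)^2 p^{\boldsymbol\alpha,\boldsymbol\lambda}_t(S)$, whereas the expectations of the terms in $c_0,f_0,c_I$ depend only on $\boldsymbol p^{\boldsymbol\alpha,\boldsymbol\lambda}$ and $\boldsymbol\lambda$; integrating yields $I(\boldsymbol\alpha,\boldsymbol\lambda)\ge\widetilde I(\boldsymbol{\widetilde\alpha},\boldsymbol\lambda)\ge\widetilde W$, hence $W\ge\widetilde W$. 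For the reverse inequality, given $(\boldsymbol{\widetilde\alpha},\boldsymbol\lambda)\in\widetilde{\mathbb A}\times\Lambda$ I would take $\boldsymbol\alpha$ as in the statement — equal to $\widetilde\alpha_t$ on $\{X_{t-}=S\}$ and to the recommended level $\lambda^{(i)}_t$ on $\{X_{t-}=i\}$ for $i\in\{I,R\}$ (the latter choices do not affect $I$); since the conditional expectation of a deterministic function is itself, all inequalities above become equalities, so $\boldsymbol p^{\boldsymbol\alpha,\boldsymbol\lambda}=\boldsymbol p^{\boldsymbol{\widetilde\alpha},\boldsymbol\lambda}$ and $I(\boldsymbol\alpha,\boldsymbol\lambda)=\widetilde I(\boldsymbol{\widetilde\alpha},\boldsymbol\lambda)$. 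This gives $W\le\widetilde W$, hence $W=\widetilde W$, and shows that if $(\boldsymbol{\widetilde\alpha},\boldsymbol\lambda)$ attains $\widetilde W$ then the associated $(\boldsymbol\alpha,\boldsymbol\lambda)$ attains $W$.

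It remains to show that $\widetilde W$ is attained. This is a deterministic finite-dimensional optimal control problem (state $\boldsymbol p$ in the simplex, controls $(\widetilde\alpha,\lambda)\in[0,1]\times\mathbb R^m_+$), and I would argue by the direct method. First I would reduce to a compact control set: the quadratic penalties in $f_0$ (and, for $\lambda^{(S)}$, the term $\frac{c_\lambda}{2}(\lambda^{(S)}_t-\widetilde\alpha_t)^2 p_t(S)$) confine the relevant coordinates of $\lambda$ to a fixed ball around $\bar\lambda$ without raising the infimum, while the coordinates entering neither the dynamics nor the cost (here $\lambda^{(R)}$) may be frozen at $\bar\lambda^{(R)}$. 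Along a minimizing sequence $(\widetilde\alpha^n,\lambda^n)$ on this compact set, the trajectories stay in the simplex with uniformly bounded derivatives, so a subsequence converges uniformly by Arzel\`a--Ascoli. Because the dynamics depend bilinearly on $\widetilde\alpha$ and $\lambda^{(I)}$, the augmented velocity set need not be convex, so I would pass to relaxed (Young-measure) controls: the associated occupation measures are weak-$*$ precompact, the relaxed state equation and the relaxed cost are weak-$*$ continuous in the control measure, and the relaxed limit $\nu^*$ satisfies $\widetilde I^{\mathrm{rel}}(\nu^*)\le\liminf_n\widetilde I(\widetilde\alpha^n,\lambda^n)=\widetilde W$. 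Finally I would purify $\nu^*$ to an ordinary control: by the Pontryagin maximum principle for the relaxed problem, the optimal relaxed control is concentrated, for a.e.\ $t$, on the minimizers of $(\widetilde\alpha,\lambda)\mapsto\widetilde H(t,\hat\pi_t,\hat y_t,\widetilde\alpha,\lambda)$ from~\eqref{eq:Hamiltonian-tilde-W}; since the Proposition assumes this minimizer to be a single admissible point, $\nu^*$ is a Dirac mass, i.e.\ an ordinary control attaining $\widetilde W$.

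The main obstacle is precisely this last step, namely the non-convexity of the augmented velocity set created by the product $\widetilde\alpha\lambda^{(I)}$ in the infection rate: this is what makes the relaxation-and-purification detour necessary rather than a direct application of the Filippov--Cesari existence theorem. An alternative that avoids relaxed controls is a two-stage scheme — first minimize over $\widetilde\alpha$ for fixed $\lambda$, where the velocity set \emph{is} convex (the drift is affine in $\widetilde\alpha$ and the running cost convex in $\widetilde\alpha$), so Filippov--Cesari yields an inner minimizer, and then minimize over the compact set of admissible $\lambda$'s — but this needs a separate verification of lower semicontinuity of the inner value function in $\lambda$, which is not entirely routine because $\lambda^{(I)}$ re-enters through the optimal $\widetilde\alpha$.
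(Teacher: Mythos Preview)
The paper does not actually prove this lemma; it states only that ``the proof readily follows by Proposition~1 and Lemma~3 of~\cite{Carmona2018Contract}.'' Your proposal therefore supplies a self-contained argument where the paper defers to an external reference. The reduction $W=\widetilde W$ via conditional Jensen (exploiting that the only controlled rate in $\bar Q$ is linear in $\alpha$, so the Kolmogorov forward equation sees $\boldsymbol\alpha$ only through its conditional mean on $\{X_{t-}=S\}$) together with the deterministic lift in the reverse direction is correct and is the standard route --- almost certainly this is what underlies the cited Proposition~1. For existence of a minimizer of $\widetilde W$, your argument by Young-measure relaxation and purification through the relaxed Pontryagin principle is sound and is more careful than a naive Filippov--Cesari application precisely because you take seriously the bilinear term $\widetilde\alpha\,\lambda^{(I)}$ in the drift, which spoils convexity of the augmented velocity set. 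One small caveat on the purification step: the paper literally assumes uniqueness of the \emph{critical point} of $\widetilde H$ (the solution of $(\nabla_{\widetilde\alpha},\nabla_\lambda)\widetilde H=0$), not directly of its minimizer; you are reading that hypothesis slightly more strongly than written, but this is clearly the intended interpretation, since otherwise neither the feedback maps $(\hat\alpha,\hat\lambda)$ nor the forward--backward system~\eqref{eq:fbode} would be well-defined.
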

The proof readily follows by Proposition 1 and Lemma 3 of \cite{Carmona2018Contract}.

We apply the necessary part of the Pontryagin maximum principle to characterizes the solution to the optimal control problem $\widetilde W$ and the corresponding flow of probability measures. The Hamiltonian for the problem $\widetilde W$ is $\widetilde H$, defined in \eqref{eq:Hamiltonian-tilde-W}.
It is straight forward to obtain the first order optimality conditions $(\nabla_{\widetilde\alpha}, \nabla_\lambda)\widetilde H = 0$. After some tedious calculation we reach \eqref{eq:example-optimal-controls}.
This concludes the proof of Proposition~\ref{prop:SIR-example}.
\end{proof}

\bibliographystyle{siam}

\end{document}